\documentclass[a4paper,reqno,12pt]{amsart}
\usepackage[T1]{fontenc}
\usepackage[utf8]{inputenc}
\usepackage{amsmath,amssymb,amsfonts,amsthm,amscd}
\usepackage{mathrsfs}
\usepackage{bm}
\usepackage{bbm}

\usepackage{tikz}
\usetikzlibrary{arrows.meta, positioning, calc}
\usepackage{pgf}
\usepackage{tikz-cd}
\usepackage{tkz-graph}
\usepackage[all,2cell]{xy}

\usepackage{graphicx}
\usepackage{multicol}
\usepackage{enumerate}
\usepackage{comment}
\usepackage{rotating}
\usepackage[normalem]{ulem}
\usepackage{xcolor}

\usetikzlibrary{decorations.markings,arrows}

\tikzset{
  midarrow/.style={
    postaction=decorate,
    decoration={
      markings,
      mark=at position .5 with {\arrow{stealth}}
    }
  }
}

\allowdisplaybreaks

\UseAllTwocells
\SilentMatrices

\numberwithin{equation}{section}

\usepackage[
  pagebackref,
  colorlinks=true,
  linkcolor=blue,
  citecolor=purple,
  urlcolor=blue,
  hypertexnames=false
]{hyperref}

\newtheorem{thm}{Theorem}[subsection]
\newtheorem{cor}[thm]{Corollary}
\newtheorem{lem}[thm]{Lemma}
\newtheorem{prop}[thm]{Proposition}

\theoremstyle{definition}
\newtheorem{dfn}[thm]{Definition}

\theoremstyle{remark}
\newtheorem{rmk}[thm]{Remark}

\newtheorem{example}[thm]{Example}

\def\FF{\mathsf{F}}

\newcommand{\image}{\operatorname{Im}}

\newcommand{\Aut}{\operatorname{Aut}}
\newcommand{\Ext}{\operatorname{Ext}}
\newcommand{\MCE}{\operatorname{MCE}}

\newcommand{\lspan}{\operatorname{span}}
\newcommand{\sur}{\operatorname{Sur}}

\newcommand{\gr}{\operatorname{gr}}

\def\-{\text{-}}

\newcommand{\KP}{\operatorname{KP}}

\newcommand{\N}{\mathbf{n}}
\newcommand{\M}{\mathbf{m}}
\newcommand{\bP}{\mathbf{p}}
\newcommand{\Q}{\mathbf{q}}
\newcommand{\bT}{\mathbf{t}}
\newcommand{\mS}{\mathcal{S}}

\begin{document}

\pagestyle{headings}

\title[Talented monoids of higher-rank graphs]{Some more talents of the talented monoid of\\ a higher-rank graph}

\author{Roozbeh Hazrat}
\address{Roozbeh Hazrat: Centre for Research in Mathematics and Data Science\\Western Sydney University, Australia} \email{r.hazrat@westernsydney.edu.au}

\author{Huanhuan Li}
\address{Huanhuan Li: Center for Pure Mathematics, School of Mathematical Sciences, Anhui University, Hefei, 230601, China} \email{lhh@ahu.edu.cn}

\author{Promit Mukherjee}
\address{Promit Mukherjee: Department of Mathematics, Jadavpur University, Kolkata-700032, India} \email{promitmukherjeejumath@gmail.com}

\subjclass[2020]{16W50, 16S35, 16S88, 06F05, 05C25, 16E20}

\keywords{Higher-rank graph, talented monoid, Kumjian--Pask algebra, locally finite algebra, crossed product, purely infinite simple algebra}

\begin{abstract}
In this paper, we explore the idea that the graded Grothendieck group $K_0^{\gr}$, or equivalently its positive cone, the talented monoid, can detect the \emph{structural type} of higher-rank graph algebras (i.e., higher-rank graph $C^*$-algebras and Kumjian--Pask algebras). 

We show that the talented monoid captures some of the essential geometric information of a higher-rank graph, including the existence of cycles with and without entrances. In turn, we show that the graded $K$-theory can effectively distinguish the class of locally finite Kumjian--Pask algebras, and also the class of crossed product Kumjian--Pask algebras. We also derive talented monoid criteria for higher-rank graph algebras to be purely infinite simple, and not to be $AF$ or ultramatricial. 
\end{abstract}

\maketitle

\section{Introduction}\label{sec intro}
Graph algebras provide a remarkably effective setting in which algebraic structure and combinatorial geometry can be studied simultaneously. In the case of Leavitt path algebras, much of the structure of the algebra is encoded in the underlying directed graph, and this has led to a rich interplay between graph theory, graded ring theory, algebraic $K$-theory, and symbolic dynamics~\cite{lpabook,AbramsHazrat}. A fundamental problem in this area is to determine which invariants retain enough information to recover the structural type of the algebra. One of the most compelling candidates is the graded Grothendieck group $K_0^{\mathrm{gr}}$~\cite{haz2013}. For a $\mathbb Z$-graded Leavitt path algebra $L_\FF(E)$ associated to the directed graph $E$ with coefficients in a field $\FF$, it has now been established that the group
$K_0^{\mathrm{gr}}(L_\FF(E))$ carries considerably more information than its ungraded counterpart $K_0(L_\FF(E))$. The positive cone of $K_0^{\mathrm{gr}}(L_\FF(E))$ is naturally a monoid with a $\mathbb Z$-action, which can be described directly from the underlying graph, commonly called the \emph{talented monoid} $T_E$ of the graph $E$~\cite{hazli,Luiz}. \emph{The graded classification conjecture} suggests that this invariant is  sufficiently rich to classify Leavitt path algebras completely~\cite{haz2013}. In particular, the graded classification conjecture would have consequences extending beyond the classification of graph algebras, including connections with the classification of shifts of finite type in symbolic dynamics. We refer the reader to \cite{AbramsRuizTomforde,AraPardo,AraDoNam,ArnoneLifting,arnonecor, ArnoneCortinasNonexistence,BockHazratSebandal,willie,DoHazratNam,EilersRuizSims,VasFull,VasGradedClassification} for work around the graded classification conjecture in the setting of Leavitt path algebras. 

In this paper, we extend the study of the graded Grothendieck group and its positive cone to the setting of algebras associated to higher-rank graphs~\cite{KumjianPask,Pino}. Higher-rank graphs, or $k$-graphs, generalize directed graphs by replacing the usual path-length function with a degree functor $d:\Lambda\longrightarrow \mathbb N^k$. The associated higher-rank $C^*$-algebras $C^*(\Lambda)$ and their discrete version, the so called, Kumjian--Pask algebras $\KP_\FF(\Lambda)$ are naturally $\mathbb Z^k$-graded, and their graded Grothendieck groups therefore carry a $\mathbb Z^k$-action. The higher dimension makes the relationship between the graph and its algebraic invariants substantially more intricate than in the one-dimensional case as we will see in this paper. The study of the graded Grothendieck group of Kumjian--Pask algebras and their positive cones, i.e., the talented monoid of higher-rank graphs were initiated in \cite{HMPS1,HMPS2}. 

In this paper, we pursue the idea that the graded Grothendieck group $K_0^{\gr}$, or equivalently, the talented monoid, can detect the \emph{structural type} of a higher-rank graph algebra. 
The program we explore is the following: given higher-rank $k$-graphs $\Lambda$ and $\Omega$, if there is an $\mathbb Z^k$-monoid isomorphism between the talented monoids $T_\Lambda$ and $T_\Omega$, then to what extent can one conclude that the associated $C^*$-algebras $C^*(\Lambda)$ and $C^*(\Omega)$ as well as Kumjian--Pask algebras $\operatorname{KP}_\FF(\Lambda)$ and $\operatorname{KP}_\FF(\Omega)$ have the same structural type, such as simple, purely infinite simple, $AF$ (ultramatricial), locally finite, crossed product etc.? 

In this direction, we start by observing that the talented monoid  $T_\Lambda$ of the $k$-graph $\Lambda$, retains detailed information about the geometry of the graph. For example, properties concerning the existence of cycles and entrances can be expressed in terms of the order and the grading action on the monoid. In particular, the existence of a nontrivial loop without an entrance is reflected by a nonzero element of the monoid which is fixed by a nonzero element of the $\mathbb Z^k$-action, whereas a loop with an entrance produces strict inequality under a positive translate (Theorem~\ref{th characterizing loops via TM}). This observation is important for the structural question that motivates the paper. In the theory of graph algebras, loops with entrances and loops without entrances play fundamentally different roles. The former are closely related to infinite behavior and purely infinite phenomena, while the latter are associated with periodic or crossed-product behavior. One of the consequences is a structural characterization of locally finite Kumjian--Pask algebras (Theorem~\ref{th LOcally finite KPA of pullback}). For finite-vertex row-finite $k$-graphs without sources, we show that an isomorphism of talented monoids preserves the property of being locally finite. Equivalently, the corresponding graded Grothendieck groups, with their ordered $\mathbb Z[\mathbb Z^k]$-module structures, detect this class of algebras.

We obtain similar results for purely infinite simple Kumjian--Pask algebras. The relevant conditions can be expressed through the order structure of the talented monoid together with the $\mathbb Z^k$-action. In particular, the existence of loops with entrances plays a central role: under the appropriate aperiodicity and cofinality hypotheses, the geometry of such loops leads to infinite projections and hence to purely infinite behavior in the associated algebra (Theorem~\ref{th the TM criterion for purely infinite simplicity}).

Finally, we consider crossed-product Kumjian--Pask algebras. Here the $\mathbb Z^k$-action on the talented monoid becomes particularly significant. For a finite-vertex row-finite $k$-graph without sources, the crossed-product property can be expressed in terms of the action on the order-unit of $T_\Lambda$. We obtain a necessary condition in terms of the absence of entrances to loops and then identify an additional geometric condition which, together with this condition, is sufficient for the Kumjian--Pask algebra to be a crossed product (Theorem~\ref{th sufficient condition for crosspdt}). The higher-rank situation also reveals an important distinction from the theory of ordinary graphs. In the one-dimensional case, the corresponding geometric conditions have a much more rigid relationship with the algebraic structure. For $k$-graphs, however, the different grading directions can interact, and a condition which is sufficient in one setting need not be necessary in another. Our examples demonstrate this additional flexibility: a higher-rank graph may fail one of the natural geometric conditions while its talented monoid nevertheless exhibits the fixed-point behavior required for the associated Kumjian--Pask algebra to be a crossed product (Subsection~\ref{ssec crosspdt} together with the examples therein).

Our results suggest that understanding precisely how much geometric information is encoded in the $\mathbb Z^k$-ordered structure of $K_0^{\mathrm{gr}}$ may be a fruitful direction for the classification theory of higher-rank graph algebras as well as higher-dimensional symbolic dynamics. 

The paper is organized as follows. We begin in Section \ref{sec prelim} with a brief review of the concepts of higher-rank graphs and their talented monoids. In Section \ref{sec TM under product and pullbacks}, we investigate the behavior of talented monoids of higher-rank graphs under categorical products and pullbacks (Propositions \ref{pro TM of product hrg} and \ref{pro TM under pullbacks}). The main results of this paper are collected in Sections \ref{sec loop classification} and \ref{sec locally finite and crossed product}. In Subsection \ref{ssec loops}, we completely describe loops with or without entrances in a $k$-graph using the algebraic pre-order and the $\mathbb{Z}^k$-action of the associated talented monoid (Theorem \ref{th characterizing loops via TM}). Here we also define higher-dimensional analogues of Condition (L) and Condition (NE) of directed graphs and obtain talented monoid criteria for a $k$-graph to satisfy these conditions (Definition \ref{def condition L and NE in higher-dimension} and Corollary \ref{cor characterizing HL and HNE via TM}). Using the results of this subsection, we obtain, in Subsection \ref{ssec purely infinite simple}, a sufficient condition in terms of the talented monoid yielding purely infinite simple $C^*$-algebras and Kumjian--Pask algebras (Theorem \ref{th the TM criterion for purely infinite simplicity}). We prove that the existence of an entrance to a generalized cycle can be captured by the talented monoid (Lemma \ref{lem generalized cycle} and Theorem \ref{th existence of generalized cycle via TM}) and consequently, find conditions under which the Kumjian--Pask algebra cannot be ultramatricial (Proposition \ref{pro a necessary condition for KPA to be ultramatricial}). Section \ref{sec locally finite and crossed product} deals with locally finite and crossed product Kumjian--Pask algebras separately. In Subsection \ref{ssec locally finite}, we obtain necessary and sufficient conditions (in terms of the geometry of a $k$-graph as well as in the language of the talented monoid) for the Kumjian--Pask algebra to be locally finite as a $\mathbb{Z}^k$-graded algebra (Theorem \ref{th LOcally finite KPA of pullback}) which allows us to conclude that the talented monoid (equivalently, the graded $K$-theory) can detect this structural type of Kumjian--Pask algebras. Finally, in Subsection \ref{ssec crosspdt}, we define a higher-dimensional version of the \emph{equally distributed length \textup{(EDL)}} condition of a directed graph and establish that this, together with the higher-dimensional \emph{no exit} condition, is sufficient for the Kumjian--Pask algebra to be a crossed product (Theorem \ref{th sufficient condition for crosspdt}). Throughout this paper, we provide ample examples to illustrate our findings.

\section{Higher-rank graphs and their talented monoids}\label{sec prelim}
In this section, we collect the necessary background information on higher-rank graphs and their talented monoids that will be used in the article. The literature of higher-rank graphs and their analytic and discrete algebras is vast. For a comprehensive understanding of this subject, we refer the reader to \cite{KumjianPask,Pino,HMPS1,HMPS2}. 

Throughout this article, for a positive integer $k$, $\mathbb{Z}^k$ stands for the abelian group of all $k$-tuples of integers, partially with respect to the coordinate-wise ordering. The positive cone consisting of $k$-tuples of nonnegative integers is denoted by $\mathbb{N}^k$. For $i=1,2,\ldots,k$, $\mathbf{e}_i$ denotes the unit vector in $\mathbb{Z}^k$ with $1$ at the $i$-th position and $0$ elsewhere. The notation $\mathbb{Z}_{<0}^k$ denotes the collection of all $k$-tuples of nonpositive integers excluding the zero tuple. We use boldface characters to denote vectors ($k$-tuples) in $\mathbb{Z}^k$. The notation $\mathsf{F}$ stands for an arbitrary field.  

A \emph{higher-rank graph} or simply a $k$-\emph{graph} is defined as a countably small category $\Lambda$ together with a functor $d:\Lambda\longrightarrow \mathbb{N}^k$ satisfying the \emph{unique factorization criterion}: if $d(\lambda)=\N+\M$ for $\N,\M\in \mathbb{N}^k$, then there exist unique $\alpha,\beta\in \Lambda$ such that $d(\alpha)=\N$, $d(\beta)=\M$ and $\lambda=\alpha\beta$. 

The \emph{domain} and \emph{codomain} of $\lambda\in \Lambda$ are usually denoted by $s(\lambda)$ and $r(\lambda)$, respectively, and are often referred to as the source and range of $\lambda$. For any $\N\in \mathbb{N}^k$, $\Lambda^\N:=d^{-1}(\N)$. Using the unique factorization criterion, it is easy to check that $\lambda\in \Lambda^0$ if and only if $\lambda$ is an object of the category $\Lambda$. The elements of $\Lambda^0$ are often called \emph{vertices}, and morphisms of $\Lambda$ are called \emph{paths}. For any path $\lambda\in \Lambda$, $d(\lambda)$ is called the \emph{degree} of $\lambda$. If $d(\lambda)=(n_1,n_2,\ldots,n_k)$, then $|\lambda|:=\displaystyle{\sum_{i=1}^{k}}~ n_i$.

For $u,v\in \Lambda^0$ and $\N\in \mathbb{N}^k$, \[u\Lambda^\mathbf{n}:=r_\Lambda^{-1}(u)\cap \Lambda^\mathbf{n},~\Lambda^\mathbf{n} v:=\Lambda^\mathbf{n}\cap s_\Lambda^{-1}(v),~u\Lambda^\mathbf{n} v:=u\Lambda^\mathbf{n} \cap \Lambda^\mathbf{n} v.\] 

A $k$-graph is called \emph{row-finite} if $|u\Lambda^\N|<\infty$ for all $u\in \Lambda^0$ and $\N\in \mathbb{N}^k$. If in addition $u\Lambda^\N\neq \emptyset$ for all $u\in \Lambda^0$ and $\N\in \mathbb{N}^k$, then $\Lambda$ is called \emph{row-finite without sources}.

For $0\le \N<\M\le d(\lambda)$, $\lambda(\N,\M)$ denotes the unique path in $\Lambda$ such that $\lambda=\alpha \lambda(\N,\M)\beta$ for some $\alpha\in \Lambda^\N$ and $\beta\in \Lambda^{d(\lambda)-\M}$. When $\N=\M$, this path is obviously a vertex, which we denote by $\lambda(\N)$. In view of this notation, $\lambda(0)=r(\lambda)$ and $\lambda(d(\lambda))=s(\lambda)$. 

Let $\lambda,\mu$ be paths in $\Lambda$. A path $\tau\in \Lambda$ is called a \emph{minimal common extension} of $\lambda$ and $\mu$ if 

\begin{center}
    $d(\tau)=d(\lambda)\vee d(\mu)$, $\tau(0,d(\lambda))=\lambda$ and $\tau(0,d(\mu))=\mu$.
\end{center}
We write 
\begin{center}
$\MCE(\lambda,\mu):=\{\tau~|~\tau$ is a minimal common extension of $\lambda,\mu\}$   
\end{center}
and 
\begin{center}
$\Lambda^{\min}(\lambda,\mu):=\{(\alpha,\beta)~|~\lambda\alpha=\mu\beta\in~ \MCE(\lambda,\mu)\}$.
\end{center}
It is easy to see that there is bijection between the sets $\MCE(\lambda,\mu)$ and $\Lambda^{\min}(\lambda,\mu)$. 

A path $\mu$ for which $\mu(0)=\mu(d(\mu))$ (i.e., $s(\mu)=r(\mu)$) is called a \emph{loop}. A loop is called \emph{nontrivial} if it is not a vertex. 

We now recall the definition of entrance to a loop as it is going to be crucial in the context of this paper. In \cite{KPS}, it is defined for row-finite locally convex $k$-graphs. As a $k$-graph without sources is obviously locally convex, we can adopt this in the setting of row-finite $k$-graphs without sources.
\begin{dfn}\label{def loop with entrance}
Let $\Lambda$ be a row-finite $k$-graph without sources. Let $\mu$ be a loop in $\lambda$. A path $\lambda\in s(\mu)\Lambda$ is called an \emph{entrance} to $\mu$ if $d(\mu)\ge d(\lambda)$ and $\mu(0,d(\lambda))\neq \lambda$. 
\end{dfn}
Note that a loop can have an entrance only if it is nontrivial. Loops in a $k$-graph can be thought as the higher-rank analogue of closed paths in a directed graph. We now define the higher-dimensional analogue of a cycle in a directed graph. We believe that the following definition already exists in the literature of higher-rank graphs, possibly in some other name.  
\begin{dfn}\label{def cycle in a k-graph}
A nontrivial loop $\mu$ in a $k$-graph is called a \emph{cycle} if for $0\le \N<\M\le d(\mu)$, $\mu(\N)=\mu(\M)$ implies $\N=0$ and $\M=d(\mu)$.   
\end{dfn}
\begin{dfn}\label{def length and unicolor path}
Let $i\in \{1,2,\ldots,k\}$. A path $\mu\in \Lambda$ is called \emph{unicolor in degree $\mathbf{e}_i$} if $d(\mu)=|\mu|\mathbf{e}_i$. unicolor cycles are defined analogously.
\end{dfn}
The importance of loops with entrances is transpired from the following result due to Kumjian, Pask and Sims \cite{KPS}, which gives a sufficient condition for the $C^*$-algebra of a higher-rank graph to be purely infinite simple. 
\begin{thm}\label{th pis of C*-algebra}
\textup{(\cite[Theorem 2]{KPS})} Let $\Lambda$ be an aperiodic, row-finite $k$-graph without sources. Suppose that every vertex of $\Lambda$ can be reached from a loop with an entrance. Then every hereditary subalgebra has an infinite projection. Hence, if $\Lambda$ is cofinal, then $C^*(\Lambda)$ is purely infinite simple.
\end{thm}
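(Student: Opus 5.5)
We would prove this following Kumjian--Pask--Sims, in two stages: first every hereditary subalgebra of $C^*(\Lambda)$ contains an infinite projection, then simplicity under cofinality, combined via the standard criterion for pure infiniteness. Write $\{s_\lambda\}$ for the generating Cuntz--Krieger $\Lambda$-family.

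The first step is to produce infinite projections. Let $\mu$ be a loop with an entrance $\lambda\in s(\mu)\Lambda$, so $d(\lambda)\le d(\mu)$ and $\mu(0,d(\lambda))\neq\lambda$. Put $v=s(\mu)=r(\mu)$. The Cuntz--Krieger relations give
\[
p_v=\sum_{\nu\in v\Lambda^{d(\lambda)}} s_\nu s_\nu^*\ \ge\ s_{\mu(0,d(\lambda))}s_{\mu(0,d(\lambda))}^*+s_\lambda s_\lambda^* .
\]
Since $s_{\mu(0,d(\mu))}=s_\mu$ factors through $\mu(0,d(\lambda))$, we get $s_\mu s_\mu^*\le s_{\mu(0,d(\lambda))}s_{\mu(0,d(\lambda))}^*$. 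Hence $s_\mu s_\mu^*\le p_v-s_\lambda s_\lambda^*<p_v$. Also $s_\mu^*s_\mu=p_{s(\mu)}=p_v$, so $p_v$ is Murray--von Neumann equivalent to a proper subprojection and is infinite. If $w$ is reached from $v$, i.e. there is $\alpha\in w\Lambda v$ (no sources lets us extend paths), then $s_\alpha^*s_\alpha=p_v$. So $p_v\sim s_\alpha s_\alpha^*\le p_w$, and $p_w$ is infinite as well. The hypothesis therefore makes every vertex projection infinite.

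The second step is to push infinite projections into an arbitrary hereditary subalgebra. This is where aperiodicity enters, and it is the main obstacle. Let $A\subseteq C^*(\Lambda)$ be a nonzero hereditary subalgebra and take a positive $a\in A$, $\|a\|=1$. We use the faithful conditional expectation $\Phi$ onto the core (from the gauge action) and aperiodicity, as in the Cuntz--Krieger uniqueness proof. Approximate $a$ by a finite sum $\sum c_{\alpha,\beta}s_\alpha s_\beta^*$. Choose $\N$ larger than all degrees involved and a vertex $w$. Aperiodicity supplies an aperiodic infinite path $x$ at $w$, and we take $\tau=x(0,\N')$ for suitable $\N'$. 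Then $Q=s_\tau s_\tau^*$ compresses away the off-diagonal terms: $Q a Q\approx Q\Phi(a)Q$, which is approximately $c\,Q$ with $c>0$ comparable to $\|\Phi(a)\|$. A standard functional-calculus argument then gives $z$ with $z^*az=s_\tau s_\tau^*$, or an element with $b\in A$ and $b^*b$ equivalent to $p_{s(\tau)}$. Thus $A$ contains a projection equivalent to $s_\tau s_\tau^*\sim p_{s(\tau)}$, which is infinite by the first step. The delicate points are choosing $\tau$ so that $s_\tau^*s_\alpha s_\beta^*s_\tau=0$ whenever $d(\alpha)\neq d(\beta)$, which needs aperiodicity of $x$, and controlling the approximation error.

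Finally, assume $\Lambda$ is cofinal. Aperiodicity and cofinality give simplicity of $C^*(\Lambda)$ by the Kumjian--Pask / Robertson--Sims simplicity theorem: gauge-invariant ideals correspond to saturated hereditary vertex sets, which are trivial by cofinality, and every nonzero ideal contains a vertex projection by the uniqueness theorem. A simple C$^*$-algebra in which every nonzero hereditary subalgebra contains an infinite projection is purely infinite, which completes the argument.
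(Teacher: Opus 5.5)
The paper does not prove this statement; it quotes it from \cite{KPS}, so I am judging your sketch on its own. Your first step is correct: $s_\mu^*s_\mu=p_v$ and $s_\mu s_\mu^*\le p_v-s_\lambda s_\lambda^*<p_v$ make $p_v$ infinite, and infiniteness passes to dominating or equivalent projections. Your last step is also fine, and so is your aperiodicity computation showing $s_\tau^*s_\alpha s_\beta^*s_\tau=0$ for $d(\alpha)\neq d(\beta)$ once $\tau$ is a long initial segment of an aperiodic path.

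\emph{The gap.} It is the claim that for $Q=s_\tau s_\tau^*$ one has $Q\Phi(a)Q\approx cQ$ with $c$ comparable to $\|\Phi(a)\|$. Let $b\ge 0$ be your finite-sum approximant and write $\Phi(b)=\sum_{d(\alpha)=d(\beta)=\N}c_{\alpha,\beta}s_\alpha s_\beta^*$. For $d(\tau)\ge\N$ you get exactly $s_\tau^*\Phi(b)s_\tau=c_{\alpha_0,\alpha_0}\,p_{s(\tau)}$ with $\alpha_0=\tau(0,\N)$. That is a single diagonal entry of the matrix of $\Phi(b)$ in $\bigoplus_v M_{\Lambda^{\N}v}(\mathbb{C})$, and it is $0$ if the vertex $w$ is chosen carelessly. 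Diagonal entries of a positive matrix can be much smaller than its norm. For example, in $\mathcal{O}_2$ the element $V_n=2^{-n/2}\sum_{|\alpha|=n}s_\alpha$ is an isometry, so $V_nV_n^*$ is a norm-one core projection. Yet $s_\tau^*V_nV_n^*s_\tau=2^{-n}$ for every $|\tau|\ge n$. Your $c$ depends on $b$, and $\N$ typically grows as $\|a-b\|\to 0$. So nothing ensures $\|a-b\|<c$, which is exactly what your functional-calculus step needs to make $QaQ$ invertible in $QC^*(\Lambda)Q$. As written, the argument does not close.

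\emph{The fix.} Compress by a minimal projection of the finite-dimensional algebra containing $\Phi(b)$, not by a path projection.
Choose the block $v$ realising $\|\Phi(b)\|$ and a unit top eigenvector $\xi$ of that block, supported on the finitely many $\alpha\in\Lambda^{\N}v$ occurring in $\Phi(b)$. Put $W=\sum_\alpha\xi_\alpha s_\alpha$; then $W^*W=p_v$ and $W^*\Phi(b)W=\|\Phi(b)\|p_v$.
Take $\lambda$ a long initial segment of an aperiodic path at $v$ and set $f=(Ws_\lambda)(Ws_\lambda)^*$.
Your aperiodicity computation, applied to the terms $s_\eta s_\zeta^*$ with $d(\eta)\neq d(\zeta)$ that arise from $W^*s_\alpha s_\beta^*W$, gives $f(b-\Phi(b))f=0$.
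Hence $fbf=\|\Phi(b)\|f$ with $\|\Phi(b)\|\ge\|\Phi(a)\|-\|a-b\|$, a bound independent of the size of $b$. Moreover $f\sim p_{s(\lambda)}$, which is infinite by your first step.

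Alternatively, you can keep a single path projection if you fix its initial segment \emph{before} approximating. Faithfulness of the expectation $\Delta$ onto the diagonal $\overline{\lspan}\{s_\tau s_\tau^*\}$ gives a cylinder set on which $\Delta(a)\ge\delta>0$. Any approximant with $\|a-b\|<\delta/2$ then works.
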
 

The notion of pure infiniteness goes beyond the analytic set-up of $C^*$-algebras. In \cite{AGP}, Ara, Goodearl and Pardo introduced purely infinite simplicity for discrete rings and then in \cite{AGPM}, the notion is extended for rings which are not necessarily simple. We recall some preliminaries from \cite{AGPM} before stating the definition of purely infinite simple rings. 

Let $R$ be a ring. Two idempotents $e,f\in R$ are called \emph{Murray-von Neumann equivalent} if there exist $x,y\in R$ such that $e=xy$ and $f=yx$. This is expressed by the notation $e\sim f$. The idempotents $e,f$ are called \emph{orthogonal} if $ef=fe=0$. In this case, their sum $e+f$ is also an idempotent. There is a partial order $\leqslant$ on the set of idempotents of $R$, namely $e\leqslant f$ if and only if $ef=fe=e$. If $e\leqslant f$ and $e\neq f$, then we write $e< f$. An idempotent $e\in R$ is called \emph{infinite} if there is an idempotent $f$ such that $e\sim f< e$. This is equivalent to saying that $eR\cong eR\oplus gR$ for some nonzero idempotent $g$ orthogonal to $e$. A simple ring $R$ is called \emph{purely infinite simple} if every nonzero one sided ideal (left or right) contains an infinite nonzero idempotent. 

A $C^*$-algebra $\mathcal{A}$ is called $AF$ if there exist a countably infinite family of finite-dimensional $C^*$-subalgebras $\{\mathcal{A}_n~|~n\in \mathbb{N}\}$ such that $\mathcal{A}_n\subseteq \mathcal{A}_{n+1}$ for all $n\in \mathbb{N}$ and $\mathcal{A}=\overline{\displaystyle{\bigcup_{n\in \mathbb{N}}}~\mathcal{A}_n}$. Evans and Sims in \cite{Evans} considered the question that when the $C^*$-algebra of a higher-rank graph is $AF$ and established some interesting geometric criteria, which prohibit the $C^*$-algebra to be $AF$. For this, they defined and used the concept of generalized cycles, which we now state in the setting of row-finite $k$-graphs.
\begin{dfn}
(\cite[Definition 3.1]{Evans}) Let $\Lambda$ be a row-finite $k$-graph. Suppose $\mu,\nu$ are distinct paths in $\Lambda$ with $s(\mu)=s(\nu)$ and $r(\mu)=r(\nu)$. Then the pair $(\mu,\nu)$ is called a \emph{generalized cycle} if for any $\tau\in s(\mu)\Lambda$, $\MCE(\mu\tau,\nu)\neq \emptyset$.     
\end{dfn}

It is clear that for any loop or cycle $\mu$ in $\Lambda$, the pair $(\mu,s(\mu))$ is a generalized cycle; however, $(s(\mu),\mu)$ is not a generalized cycle if there exists $\nu\in s(\mu)\Lambda\setminus \{\mu\}$ such that $d(\nu)=d(\mu)$. This implies that the definition of a generalized cycle is not symmetric. If $(\mu,\nu)$ is a generalized cycle, then a path $\tau\in s(\nu)\Lambda$ is called an \emph{entrance} to $(\mu,\nu)$ if $\MCE(\mu,\nu\tau)=\emptyset$.    

As any finite dimensional $C^*$-algebra is essentially a finite direct sum of matrix algebras over $\mathbb{C}$, there is an obvious discretization of the concept of $AF$ $C^*$-algebras, called ultramatricial algebras. More precisely, an $\mathsf{F}$-algebra $A$ is called \emph{matricial} if it is isomorphic to a finite direct sum of finite dimensional matrix algebras over $\mathsf{F}$. $A$ is called \emph{ultramatricial} if it isomorphic to a direct limit of a countable sequence of matricial $\mathsf{F}$-algebras. It can be shown that any ultramatricial algebra is directly finite and consequently, contains no infinite idempotents.

In \cite{Pino}, Aranda Pino et al. defined the discrete version of higher-rank graph $C^*$-algebras \cite{KumjianPask}, commonly known as \emph{Kumjian--Pask algebras}. 

\begin{dfn}\label{def KP-algebra}
({\bf Kumjian--Pask algebra}) Let $\Lambda$ be a row-finite $k$-graph without sources. With each path $\lambda\in \Lambda\setminus \Lambda^0$, associate a formal symbol $\lambda^*$. Then the \emph{Kumjian--Pask algebra} of $\Lambda$ over $\mathsf{F}$ is denoted by $\KP_\mathsf{F}(\Lambda)$ and is defined to be the universal associative $\mathsf{F}$-algebra generated by formal symbols $p_v,s_\lambda,s_{\lambda^*}$; $v\in \Lambda^0,\lambda\in \Lambda\setminus \Lambda^0$ subject to the following relations:

$(KP1)$ $p_up_v=\delta_{u,v}p_u$ for all $u,v\in \Lambda^0$;

$(KP2)$ $s_\lambda s_\mu=s_{\lambda\mu},$ $s_{\mu^*}s_{\lambda^*}=s_{(\lambda\mu)^*}$ for all $\lambda,\mu\in \Lambda^{\neq 0}$ with $s(\lambda)=r(\mu)$, and

$p_{r(\lambda)}s_\lambda=s_\lambda=s_\lambda p_{s(\lambda)},$ $p_{s(\lambda)}s_{\lambda^*}=s_{\lambda^*}=s_{\lambda^*}p_{r(\lambda)}$ for all $\lambda\in \Lambda^{\neq 0}$;

$(KP3)$ $s_{\lambda^*} s_\mu=\delta_{\lambda,\mu}p_{s(\lambda)}$ for all $\lambda,\mu\in \Lambda^{\neq 0}$ with $d(\lambda)=d(\mu)$;

$(KP4)$ $p_v=\displaystyle{\sum_{\lambda\in v\Lambda^\N}} s_\lambda s_{\lambda^*}$ for all $v\in \Lambda^0$ and for all $\N\in \mathbb{N}^k\setminus \{0\}$.
\end{dfn}

In view of relation $(KP1)$, $p_v$ are orthogonal idempotents. Also, $s_\lambda s_{\lambda^*}$ are idempotents by $(KP2)$ and $(KP3)$. 

Despite the close similarity in the definition of Leavitt path algebras and Kumjian--Pask algebras, the algebraic structure of Kumjian--Pask algebras is much more complex than that of Leavitt path algebras. This is mainly due to the delicate geometry of the underlying higher-rank graph arising from the factorization of paths having multidimensional degrees. The class of Kumjian--Pask algebras not only covers Leavitt path algebras but also covers some algebras which cannot be realized as Leavitt path algebra of a directed graph. For example, if we consider the canonical $2$-graph $\Lambda:=\mathbb{N}^2$ with the identity morphism playing the role of the functor $d$, then its Kumjian--Pask algebra is isomorphic to the algebra of Laurent polynomials in two commuting variables, which is not Leavitt path algebra of any directed graph (see \cite[Example 7.1]{Pino}).  

The algebra $\KP_\mathsf{F}(\Lambda)$ is a $\mathbb{Z}^k$-graded algebra, where for any $\N\in \mathbb{Z}^k$, \[\KP_\mathsf{F}(\Lambda)_\N:=\lspan_\mathsf{F}\left(\{s_\lambda s_{\mu^*}~|~\lambda,\mu\in \Lambda~\text{and}~d(\lambda)-d(\mu)=\N\}\right).\] Following \cite{AAS}, we say that a $\Gamma$-graded $\mathsf{F}$-algebra $A=\displaystyle{\bigoplus_{\gamma\in \Gamma}}~A_\gamma$ is \emph{locally finite} if each homogeneous component $A_\gamma$ is a finite-dimensional $F$-subspace of $A$. In \cite{AAS}, the authors gave a complete description of locally finite Leavitt path algebras of directed graphs. One of the goals of this article is to extend their result to the higher-dimensional setting of Kumjian--Pask algebras. 

Another important class of graded algebras consists of crossed product algebras. Let $\Gamma$ be a group and $A=\displaystyle{\bigoplus_{\gamma\in \Gamma}}$ a $\Gamma$-graded unital algebra (or ring). Then $A$ is called a \emph{crossed product} if each homogeneous component $A_\gamma$ contains a unit (invertible element).

We finish this preliminary section by reviewing talented monoids of higher-rank graphs \cite{HMPS1}. For this we first recall the notion of $\Gamma$-monoids. Let $\Gamma$ be a group and $M$ a commutative monoid. Then $M$ is called a \emph{$\Gamma$-monoid} if there is a group homomorphism $\eta:\Gamma\longrightarrow \Aut(M)$. For $x\in M$ and $\gamma\in \Gamma$, we usually write $^\gamma x$ to denote the element $\phi(\gamma)(x)$. Every commutative monoid $M$ is pre-ordered with respect to the \emph{algebraic pre-order} $\le$ defined by $x\le y$ if and only if $y=x+z$ for some $z\in M$. An element $\epsilon$ of a $\Gamma$-monoid $M$ is called an \emph{order-unit} if for any $x\in M$, there exist positive integers $n_1,n_2,\ldots,n_t$ and $\gamma_1,\gamma_2,\ldots,\gamma_t\in \Gamma$ such that $x\le \displaystyle{\sum_{i=1}^{t}} n_i(^{\gamma_i}\epsilon)$.

For the rest of this section, we fix a row-finite $k$-graph $\Lambda$. By $\mathbb{F}_\Lambda$, we denote the free commutative monoid generated by $\Lambda^0$. For each $i=1,2,\ldots,k$, define a binary relation $\longrightarrow_{i}$ on $\mathbb{F}_\Lambda\setminus \{0\}$ as follows: if $x,y\in \mathbb{F}_\Lambda$ and $x=\displaystyle{\sum_{t=1}^{n}} v_t$ with $v_t\in \Lambda^0$, then $x\longrightarrow_i y$ if and only if \[y=\displaystyle{\sum_{\substack{1\le t\le n\\t\neq j}}} v_t+\displaystyle{\sum_{\alpha\in v_j\Lambda^{\mathbf{e}_i}}}s(\alpha)\] for some $j\in \{1,2,\ldots,n\}$. Let $\longrightarrow$ denote the reflexive and transitive closure of the $k$ relations $\longrightarrow_i$ and $\sim$ denote the congruence on $\mathbb{F}_\Lambda$ generated by $\longrightarrow$. Then the \emph{$k$-graph monoid} of $\Lambda$ is defined as $M_\Lambda:=\mathbb{F}_\Lambda/\sim$ (see \cite{HMPS1}). This is a conical monoid and when $\Lambda$ is without sources, it has a confluence property as described in \cite[Lemma 3.5]{HMPS1}. 

\begin{dfn}\label{def talented monoid}
({\bf Talented monoid}) The \emph{talented monoid} of $\Lambda$ is denoted by $T_\Lambda$ and is defined as the quotient of the free commutative monoid generated by $\{v(n)~|~v\in \Lambda^0, n\in \mathbb{Z}^k\}$ modulo the congruence defined by the following relations:
\begin{equation}\label{TM equation}
    v(n)=\displaystyle{\sum_{\alpha\in v\Lambda^{e_i}}} s(\alpha)(n+e_i)
\end{equation}
for each $v\in \Lambda^0$, $n\in \mathbb{Z}^k$ and $i\in \{1,2,...,k\}$ with $v\Lambda^{e_i}\neq \emptyset$.   
\end{dfn}
When $\Lambda$ has no sources, then the set of relations (\ref{TM equation}) is equivalent to the relations 
\begin{equation*}
    v(n)=\displaystyle{\sum_{\alpha\in v\Lambda^m}} s(\alpha)(n+m).
\end{equation*}

The monoid $T_\Lambda$ comes with a natural action of the group $\mathbb{Z}^k$ which is defined on the generators as follows: $^\Q v(\N):=v(\N+\Q)$. This action makes $T_\Lambda$ a $\mathbb{Z}^k$-monoid, and when $|\Lambda^0|<\infty$, $\epsilon_\Lambda:=\displaystyle{\sum_{v\in \Lambda^0}} v(0)$ is an order-unit of this $\mathbb{Z}^k$-monoid. 

The $\mathbb{Z}^k$-action distinguishes the talented monoid from the ordinary $k$-graph monoid and is very much useful in understanding the geometry of the underlying $k$-graph as evident from the results in \cite{HMPS1} and will also be in the development of this paper. The most important fact about the talented monoid is that it recovers the graded Grothendieck group of the Kumjian--Pask algebra of $\Lambda$ directly from the geometric data. To be precise, $T_\Lambda$ is $\mathbb{Z}^k$-isomorphic to the monoid $\mathcal{V}^{\gr}(\KP_\mathsf{F}(\Lambda))$ where the isomorphism sends $v(\N)$ to $[\KP_\mathsf{F}(\Lambda)p_v (\N)]$ (see \cite[Lemma 5.6 \& Corollary 6.6]{AHLS} and \cite[Proposition 3.14]{HMPS1}). From this, it follows that $T_\Lambda$ is cancellative and can be realized as the positive cone of the $\mathbb{Z}[\mathbb{Z}^k]$-module $K_0^{\gr}(\KP_\mathsf{F}(\Lambda))$. 

We recall the definition of the skew-product of a $k$-graph $\Lambda$ with the group $\mathbb{Z}^k$. It is denoted by $\overline{\Lambda}$ and is a $k$-graph with underlying set $\Lambda\times \mathbb{Z}^k$, structure maps: $\overline{r}((\lambda,\N)):=(r(\lambda),\N)$, $\overline{s}((\lambda,\N)):=(s(\lambda),d(\lambda)+\N)$, composition: $(\lambda,\N)(\mu,d(\lambda)+\N):=(\lambda\mu,\N)$ (provided $r(\mu)=s(\lambda)$) and degree map $\overline{d}((\lambda,\N)):=d(\lambda)$. If we consider the elements $(\lambda,\N)$; $\lambda\in \Lambda$ as of state $\N$, then it is clear that for any path $(\lambda,\N)$, the state of its source is ahead of the state of its range. The $k$-graph $\overline{\Lambda}$ has the following properties, which we will exploit in some of our proofs of this paper.

$(i)$ $\overline{\Lambda}$ is always countably infinite.

$(ii)$ $\overline{\Lambda}$ has no loop.

$(iii)$ $\overline{\Lambda}$ repeats the same pattern in each state: if $(\lambda,\N)$ is path from $(u,d(\Lambda)+\N)$ to $(v,\N)$ then for any $\M\in \mathbb{Z}^k$, $(\lambda,\N+\M)$ is a path from $(u,d(\lambda)+\N+\M)$ to $(v,\N+\M)$.

In \cite[Theorem 3.16$(ii)$]{HMPS1}, it is shown that there is a $\mathbb{Z}^k$-monoid isomorphism $\phi:T_\Lambda\longrightarrow M_{\overline{\Lambda}}$, which sends $v(\N)$ to $(v,\N)$.

\section{Talented monoid under some basic operations on higher-rank graphs}\label{sec TM under product and pullbacks}
In this section, we discuss two basic operations on higher-rank graphs, namely categorical products and pullbacks. In Propositions \ref{pro TM of product hrg} and \ref{pro TM under pullbacks}, we investigate how the talented monoids of $k$-graphs react under these operations.

\subsection{Under categorical products}\label{ssec categorical product} Let $(\Lambda_1,d_1)$ and $(\Lambda_2,d_2)$ be higher-rank graphs of ranks $k$ and $\ell$ respectively. Consider the product category $\Lambda_1\times \Lambda_2$. Then it can be given the structure of a $(k+\ell)$-graph by defining the degree functor as 
\begin{align*}
d_1\times d_2:\Lambda_1\times \Lambda_2&\longrightarrow \mathbb{N}^{k+\ell}\\
(\lambda_1,\lambda_2)& \longmapsto \iota_1(d_1(\lambda_1))+\iota_2(d_2(\lambda_2)),
\end{align*}
where $\iota_1,\iota_2$ are the respective canonical inclusions of $\mathbb{Z}^k$ and $\mathbb{Z}^\ell$ into $\mathbb{Z}^k\times \mathbb{Z}^\ell=\mathbb{Z}^{k+\ell}$. Let $\pi_1,\pi_2$ be the canonical projections of $\mathbb{Z}^{k+\ell}=\mathbb{Z}^k\times \mathbb{Z}^\ell$ onto $\mathbb{Z}^k\times \{0\}$ and $\{0\}\times \mathbb{Z}^\ell$ respectively. We are interested to know the relationship between the talented monoid of $\Lambda_1\times \Lambda_2$ and the talented monoids of $\Lambda_1$, $\Lambda_2$. 

\begin{prop}\label{pro TM of product hrg}
Let $(\Lambda_1,d_1)$ and $(\Lambda_2,d_2)$ be $k$- and $\ell$-graph respectively. Suppose both are row-finite and without sources. Then 

$(i)$ the tensor product $T_{\Lambda_1}\otimes T_{\Lambda_2}$ is a $\mathbb{Z}^{k+\ell}$-monoid, where \[^\mathbf{q}(v(\M)\otimes w(\N))=v(\M+\iota_1^{-1}(\pi_1(\mathbf{q})))\otimes w(\N+\iota_2^{-1}(\pi_2(\mathbf{q}))).\]

$(ii)$ $T_{\Lambda_1\times \Lambda_2}$ and $T_{\Lambda_1}\otimes T_{\Lambda_2}$ are isomorphic as $\mathbb{Z}^{k+\ell}$-monoids. 
\end{prop}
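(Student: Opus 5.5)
For part $(i)$, I will first fix what $T_{\Lambda_1}\otimes T_{\Lambda_2}$ means: the tensor product of commutative monoids, i.e. the quotient of the free commutative monoid on symbols $x\otimes y$ ($x\in T_{\Lambda_1}$, $y\in T_{\Lambda_2}$) by the bi-additivity relations $(x+x')\otimes y=x\otimes y+x'\otimes y$, $x\otimes(y+y')=x\otimes y+x\otimes y'$, and $0\otimes y=x\otimes 0=0$. For $\mathbf{q}\in\mathbb{Z}^{k+\ell}$, write $\mathbf{q}_1=\iota_1^{-1}\pi_1(\mathbf{q})$ and $\mathbf{q}_2=\iota_2^{-1}\pi_2(\mathbf{q})$. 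The map $(x,y)\mapsto {}^{\mathbf{q}_1}x\otimes {}^{\mathbf{q}_2}y$ is bi-additive, since each factor action is a monoid automorphism. By the universal property it therefore induces an endomorphism of the tensor product. On generators this is exactly the stated formula, because the generators $v(\M)$ and $w(\N)$ generate each factor. Additivity in $\mathbf{q}$ and the fact that $\mathbf{q}=0$ acts as the identity follow from the corresponding facts in each factor. Hence each $\mathbf{q}$ acts by an automorphism with inverse $-\mathbf{q}$, and we get a homomorphism $\mathbb{Z}^{k+\ell}\to\Aut(T_{\Lambda_1}\otimes T_{\Lambda_2})$.

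For part $(ii)$, I will construct mutually inverse $\mathbb{Z}^{k+\ell}$-homomorphisms on generators. First I note that $\Lambda_1\times\Lambda_2$ is row-finite without sources, since $(v,w)(\Lambda_1\times\Lambda_2)^{(\M,\N)}=v\Lambda_1^\M\times w\Lambda_2^\N$. Define
\[\Phi:T_{\Lambda_1\times\Lambda_2}\to T_{\Lambda_1}\otimes T_{\Lambda_2},\qquad (v,w)(\M,\N)\mapsto v(\M)\otimes w(\N).\]
To see $\Phi$ is well defined, I check the defining relations for each $\mathbf{e}_i$. If $i\le k$, then $(v,w)(\Lambda_1\times\Lambda_2)^{\mathbf{e}_i}=v\Lambda_1^{\mathbf{e}_i}\times\{w\}$. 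The relation therefore maps to $v(\M)\otimes w(\N)=\sum_{\alpha\in v\Lambda_1^{\mathbf{e}_i}} s(\alpha)(\M+\mathbf{e}_i)\otimes w(\N)$, which holds by bi-additivity together with the relation in $T_{\Lambda_1}$. The case $i>k$ is symmetric. Equivariance is immediate on generators.

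For the inverse $\Psi$, I define it on pairs of generators by $\Psi(v(\M),w(\N))=(v,w)(\M,\N)$. The key step is to show that this extends to a well-defined bi-additive map $T_{\Lambda_1}\times T_{\Lambda_2}\to T_{\Lambda_1\times\Lambda_2}$. Fix a generator $w(\N)$. The assignment $v(\M)\mapsto(v,w)(\M,\N)$ extends from the free monoid on the generators of $T_{\Lambda_1}$ and respects the relations of $T_{\Lambda_1}$: the relation $v(\M)=\sum_\alpha s(\alpha)(\M+\mathbf{e}_i)$ goes to precisely the relation of $T_{\Lambda_1\times\Lambda_2}$ at the vertex $(v,w)$ in direction $\iota_1(\mathbf{e}_i)$. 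This gives, for each $w(\N)$, a homomorphism $\psi_{w(\N)}:T_{\Lambda_1}\to T_{\Lambda_1\times\Lambda_2}$. Next, for fixed $x\in T_{\Lambda_1}$, the assignment $w(\N)\mapsto\psi_{w(\N)}(x)$ must respect the relations of $T_{\Lambda_2}$. It suffices to check this for $x=v(\M)$ a generator, because both sides are additive in $x$; for generators it is the relation of $T_{\Lambda_1\times\Lambda_2}$ in direction $\iota_2(\mathbf{e}_j)$. This yields a bi-additive map and hence a homomorphism $\Psi$ out of the tensor product.

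Then $\Phi\Psi$ and $\Psi\Phi$ are the identity on generators, so both are identities and $\Phi$ is a $\mathbb{Z}^{k+\ell}$-isomorphism. The main care point is this two-stage well-definedness of $\Psi$: the relations must be handled one variable at a time, and generality in the other variable follows by additivity, not by direct computation.
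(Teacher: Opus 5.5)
Your proposal is correct and follows essentially the same route as the paper. Part $(i)$ comes from the universal property applied to the shifted bi-additive maps, and part $(ii)$ is proved by building mutually inverse equivariant homomorphisms on generators. Your $\Phi$ and $\Psi$ are the paper's $\Psi$ and $\Phi$, respectively. Where you differ is in the details: you check only the defining $\mathbf{e}_i$-relations, and you verify well-definedness and bi-additivity on $T_{\Lambda_1}\times T_{\Lambda_2}$ one variable at a time. The paper instead checks relations of arbitrary degree in both factors simultaneously, so your version is a slightly more careful rendering of the same check.
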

\begin{proof}
$(i)$ For each $\mathbf{q}\in \mathbb{Z}^{k+\ell}$, define a map 
\begin{align*}
\eta_\mathbf{q}:T_{\Lambda_1}\times T_{\Lambda_2}&\longrightarrow T_{\Lambda_1}\otimes T_{\Lambda_2}\\
\left(\displaystyle{\sum_{i=1}^{p}} v_i(\M_i),\displaystyle{\sum_{j=1}^{q}} w_j(\N_j)\right)&\longmapsto \displaystyle{\sum_{i=1}^{p}}\left(\displaystyle{\sum_{j=1}^{q}} v_i(\M_i+\iota_1^{-1}(\pi_1(\mathbf{q})))\otimes w_j(\N_j+\iota_2^{-1}(\pi_2(\mathbf{q})))\right). 
\end{align*}
One can then check that the map $\eta_\mathbf{q}$ is a well-defined bilinear map. As a consequence, it extends to a unique monoid homomorphism $\overline{\eta_\mathbf{q}}:T_{\Lambda_1}\otimes T_{\Lambda_2}\longrightarrow T_{\Lambda_1}\otimes T_{\Lambda_2}$. Obviously, $\overline{\eta_\mathbf{q}}$ is invertible with inverse homomorphism $\overline{\eta_{-\mathbf{q}}}$. Hence, $\mathbb{Z}^{k+\ell}$ acts on $T_{\Lambda_1}\otimes T_{\Lambda_2}$ by automorphisms, making it a $\mathbb{Z}^{k+\ell}$-monoid. It is evident that \[^\mathbf{q}(v(\M)\otimes w(\N))=\overline{\eta_\mathbf{q}}(v(\M)\otimes w(\N))=\eta_\mathbf{q}(v(\M),w(\N))=v(\M+\iota_1^{-1}(\pi_1(\mathbf{q})))\otimes w(\N+\iota_2^{-1}(\pi_2(\mathbf{q}))).\]

$(ii)$ Consider the following map: 
\begin{align*}
\phi:T_{\Lambda_1}\times T_{\Lambda_2}&\longrightarrow T_{\Lambda_1\times \Lambda_2}\\
\left(\displaystyle{\sum_{i=1}^{p}} v_i(\M_i),\displaystyle{\sum_{j=1}^{q}} w_j(\N_j)\right)&\longmapsto \displaystyle{\sum_{i=1}^{p}}\left(\displaystyle{\sum_{j=1}^{q}} (v_i,w_j)(\iota_1(\M_i)+\iota_2(\N_j))\right).
\end{align*}
Let $(v(\M),w(\N))\in T_{\Lambda_1}\times T_{\Lambda_2}$. In $T_{\Lambda_1}$, we have $v(\M)=\displaystyle{\sum_{\alpha\in v\Lambda_1^{\mathbf{q}}}} s(\alpha)(\M+\mathbf{q})$ for all $\mathbf{q}\in \mathbb{N}^k$; whereas in $T_{\Lambda_2}$, $w(\N)=\displaystyle{\sum_{\beta\in w\Lambda_2^{\mathbf{t}}}} s(\beta)(\N+\mathbf{t})$ for all $\mathbf{t}\in \mathbb{N}^\ell$. Note that 
\begin{align*}
&~(v,w)(\Lambda_1\times \Lambda_2)^{\iota_1(\mathbf{q})+\iota_2(\mathbf{t})}\\
=&~\{(\alpha,\beta)\in \Lambda_1\times \Lambda_2~|~r(\alpha)=v,r(\beta)=w,~\text{and}~\iota_1(d_1(\alpha))+\iota_2(d_2(\beta))=\iota_1(\mathbf{q})+\iota_2(\mathbf{t})\}\\
=&~\{\alpha\in \Lambda_1~|~r(\alpha)=v,\iota_1(d_1(\alpha))=\iota_1(\mathbf{q})\}\times \{\beta\in \Lambda_2~|~r(\beta)=w,\iota_2(d_2(\beta))=\iota_2(\mathbf{t})\}\\
=&~v\Lambda_1^{\mathbf{q}}\times w\Lambda_2^{\mathbf{t}}
\end{align*} since $\iota_1,\iota_2$ are injective. In view of this fact, we have 
\begin{align*}
&\phi\left(\displaystyle{\sum_{\alpha\in v\Lambda_1^{\mathbf{q}}}} s(\alpha)(\M+\mathbf{q}),\displaystyle{\sum_{\beta\in w\Lambda_2^{\mathbf{t}}}} s(\beta)(\N+\mathbf{t})\right)\\
=&\displaystyle{\sum_{\alpha\in v\Lambda_1^{\mathbf{q}},\beta\in w\Lambda_2^{\mathbf{t}}}} (s(\alpha),s(\beta))(\iota_1(\M+\mathbf{q})+\iota_2(\N+\mathbf{t}))\\
=&\displaystyle{\sum_{(\alpha,\beta)\in (v,w)(\Lambda_1\times \Lambda_2)^{\iota_1(\mathbf{q})+\iota_2(\mathbf{t})}}} s((\alpha,\beta))(\iota_1(\M)+\iota_2(\N)+\iota_1(\mathbf{q})+\iota_2(\mathbf{t}))\\
=&(v,w)(\iota_1(\M)+\iota_2(\N))=\phi(v(\M),w(\N)), 
\end{align*}
which shows that $\phi$ is well-defined. By definition, $\phi$ is bilinear. Hence, it extends to a unique monoid homomorphism $\Phi:T_{\Lambda_1}\otimes T_{\Lambda_2}\longrightarrow T_{\Lambda_1\times \Lambda_2}$. Now, we define a map in the other direction. Define $\Psi:T_{\Lambda_1\times \Lambda_2}\longrightarrow T_{\Lambda_1}\otimes T_{\Lambda_2}$ on generators as \[\Psi((v,w)(\mathbf{p})):=v(\iota_1^{-1}(\pi_1(\mathbf{p})))\otimes w(\iota_2^{-1}(\pi_2(\mathbf{p})))\] and then extend linearly to all of $T_{\Lambda_1\times \Lambda_2}$. Again, we check that $\Psi$ is well-defined. For any $\mathbf{q}\in \mathbb{N}^{k+\ell}$, using the same fact as used earlier, we have
\allowdisplaybreaks
{
\begin{align*}
&\Psi\left(\displaystyle{\sum_{(\lambda,\mu)\in (v,w)(\Lambda_1\times \Lambda_2)^{\mathbf{q}}}} s((\lambda,\mu))(\mathbf{p}+\mathbf{q})\right)\\
=&\displaystyle{\sum_{(\lambda,\mu)\in (v,w)(\Lambda_1\times \Lambda_2)^{\mathbf{q}}}} s(\lambda)(\iota_1^{-1}(\pi_1(\mathbf{p}+\mathbf{q})))\otimes s(\mu)(\iota_2^{-1}(\pi_2(\mathbf{p}+\mathbf{q})))\\
=&\displaystyle{\sum_{(\lambda,\mu)\in v\Lambda_1^{\iota_1^{-1}(\pi_1(\mathbf{q}))} \times w\Lambda_2^{\iota_2^{-1}(\pi_2(\mathbf{q}))}}} s(\lambda)(\iota_1^{-1}(\pi_1(\mathbf{p}))+\iota_1^{-1}(\pi_1(\mathbf{q})))\otimes s(\mu)(\iota_2^{-1}(\pi_2(\mathbf{p}))+\iota_2^{-1}(\pi_2(\mathbf{q})))\\
=&\left(\displaystyle{\sum_{\lambda\in v\Lambda_1^{\iota_1^{-1}(\pi_1(\mathbf{q}))}}} s(\lambda)(\iota_1^{-1}(\pi_1(\mathbf{p}))+\iota_1^{-1}(\pi_1(\mathbf{q})))\right)\otimes \left(\displaystyle{\sum_{\mu\in w\Lambda_2^{\iota_2^{-1}(\pi_2(\mathbf{q}))}}} s(\mu)(\iota_2^{-1}(\pi_2(\mathbf{p}))+\iota_2^{-1}(\pi_2(\mathbf{q})))\right)\\
=&~v(\iota_1^{-1}(\pi_1(\mathbf{p})))+w(\iota_2^{-1}(\pi_2(\mathbf{p})))=\Psi((v,w)(\mathbf{p})).
\end{align*}
}
Therefore, $\Psi$ is a well-defined monoid homomorphism. It is now easy to observe that $\Phi$ and $\Psi$ are mutually inverse homomorphisms. Thus, we have the desired isomorphism. Finally, for any generator $(v,w)(\mathbf{p})$ of $T_{\Lambda_1\times \Lambda_2}$, 
\begin{align*}
\Psi(^\mathbf{q}(v,w)(\mathbf{p}))&=\Psi((v,w)(\mathbf{p}+\mathbf{q}))\\
&=v(\iota_1^{-1}(\pi_1(\mathbf{p}+\mathbf{q})))\otimes w(\iota_2^{-1}(\pi_2(\mathbf{p}+\mathbf{q})))\\
&=^\mathbf{q}(v(\iota_1^{-1}(\pi_1(\mathbf{p})))\otimes w(\iota_2^{-1}(\pi_2(\mathbf{p})))\\
&=^\mathbf{q}\Psi((v,w)(\mathbf{p})).    
\end{align*}
Thus, the isomorphism $\Psi$ preserves the $\mathbb{Z}^{k+\ell}$-action and the result follows. 
\end{proof}
The following example illustrates the usage of the above proposition.
\begin{example}
Consider a $2$-graph $\Lambda$ with the following $1$-skeleton: 
\[
\begin{tikzpicture}[scale=1.5]
\node[inner sep=1.5pt, circle,draw,fill=black] (A) at (0,0) {}
edge[-latex, red,thick, loop, dashed, out=135, in=225, looseness=8, distance=1.5cm] (A);

\node[inner sep=1.5pt, circle,draw,fill=black] (B) at (2,0) {}
edge[-latex, red,thick, loop, dashed, out=-45, in=45, looseness=8, distance=1.5cm] (B);

\path[->, blue, >=latex,thick] (A) edge [bend left=25] node[above=0.05cm]{} (B);
\path[->, blue, >=latex,thick] (B) edge [bend left=25] node[above=0.05cm]{} (A);

\node at (0,-0.42) {$u$};
\node at (2,-0.42) {$v$};

\node at (1,0.6) {$f^1$};
\node at (1,-0.6) {$f^2$};

\node at (-0.7,0.7) {$e^1$};
\node at (2.7,0.7) {$e^2$};
\end{tikzpicture}
\]

The factorizations are uniquely determined by the skeleton. Now consider the following directed graphs:
\[
\begin{tikzpicture}[scale=1.5]
\node[inner sep=1.5pt, circle,draw,fill=black] (A) at (0,0) {}
edge[-latex, black,thick, loop, out=135, in=225, looseness=8, distance=1.5cm] (A);  

\node[inner sep=1.5pt, circle,draw,fill=black] (B) at (4,0) {};	
\node[inner sep=1.5pt, circle,draw,fill=black] (C) at (6,0) {};

\path[->, >=latex,thick] (B) edge [bend left=25] node[above=0.05cm]{} (C);
\path[->, >=latex,thick] (C) edge [bend left=25] node[above=0.05cm]{} (B);

\node at (0.3,0) {$a$};

\node at (3.7,0) {$b$};
\node at (6.3,0) {$c$};

\node at (-1.2,0) {$e$};
\node at (5,0.6) {$h_1$};
\node at (5,-0.6) {$h_2$};

\node at (-2,0) {$E\equiv$};
\node at (3,0) {$F\equiv$};
\end{tikzpicture}
\]
It is not hard to see that the product of the $1$-graphs $E^*$ and $F^*$ is isomorphic to the $2$-graph $\Lambda$. Indeed the correspondence $(a,b)\longmapsto u$, $(a,c)\longmapsto v$, $(e,b)\longmapsto e^1$, $(e,c)\longmapsto e^2$, $(a,h_1)\longmapsto f^2$ and $(a,h_2)\longmapsto f^1$ extends to a $2$-graph isomorphism between $E^*\times F^*$ and $\Lambda$. Therefore, in view of Proposition \ref{pro TM of product hrg}, we have \[T_\Lambda\cong T_{E^*\times F^*}\cong T_{E^*}\otimes T_{F^*}\cong T_E\otimes T_F\cong \mathbb{N}\otimes (\mathbb{N}\oplus \mathbb{N})\cong \mathbb{N}\oplus \mathbb{N}.\] 
\end{example}
\subsection{Under pullbacks}\label{ssec pullback} Given an $\ell$-graph $\Lambda$ and monoid homomorphism $f:\mathbb{N}^k\longrightarrow \mathbb{N}^\ell$, recall that one may form the \emph{pullback $k$-graph} $f^*(\Lambda)$ (see \cite[Definition 1.9]{KumjianPask}) where the underlying set is the fibered product $\Lambda {{}_{d}\times_{f}} \mathbb{N}^k$. The range, source and degree maps are defined as $r((\lambda,\N)):=(r(\lambda),\N)$, $s((\lambda,\N)):=(s(\lambda),\N)$ and $d((\lambda,\N)):=\N$.

If $f:\mathbb{N}^k\longrightarrow \mathbb{N}^\ell$ is a surjective homomorphism and $\Lambda$ is an $\ell$-graph, then in the proof of \cite[Proposition 4.8(ii)]{HMPS1}, it was established that there is a surjective monoid homomorphism $\rho:T_{f^*(\Lambda)}\longrightarrow T_\Lambda$. In the proposition below, we strengthen this by showing that $\rho$ is indeed an isomorphism.   

To state the following proposition, we define \emph{equivariant homomorphisms} between monoids with group actions. In what follows, a pair $(\Gamma,M)$ indicates a commutative monoid $M$ equipped with a $\Gamma$-action (which we often called a $\Gamma$-monoid). An \emph{equivariant homomorphism} between two such pairs $(\Gamma,M)$ and $(\Delta,N)$ is defined as a pair $(f,\rho)$, where $f:\Gamma\longrightarrow \Delta$ is a group homomorphism and $\rho:M\longrightarrow N$ is a monoid homomorphism such that $\rho(^\gamma x)=~^{f(\gamma)}\rho(x)$ for all $\gamma\in \Gamma$ and $x\in M$. By \textbf{Grp-Mon}, we denote the category where objects are commutative monoids equipped with group actions and morphisms are equivariant homomorphisms.     

\begin{prop}\label{pro TM under pullbacks}
Let $\Lambda$ be a row-finite $\ell$-graph without sources and $f:\mathbb{N}^k\longrightarrow \mathbb{N}^\ell$ a monoid homomorphism. Then there exists a monoid homomorphism $\rho:T_{f^*(\Lambda)}\longrightarrow T_{\Lambda}$ such that the diagram 

\begin{center}
\begin{tikzcd}
(\mathbb{Z}^k,T_{f^*(\Lambda)})\arrow[rr, "{(\text{id},\rho)}"] \arrow[dr, "{(f,\rho)}"'] & & (\mathbb{Z}^k,T_\Lambda) \arrow[dl, "{(f,\text{id})}"] \\
& (\mathbb{Z}^\ell, T_\Lambda) &
\end{tikzcd}
\end{center}
commutes in the category \textup{\textbf{Grp-Mon}}, where in $(\mathbb{Z}^k,T_\Lambda)$ we view $T_\Lambda$ as a $\mathbb{Z}^k$-monoid with respect to the action $^\Q a:=~^{f(\Q)}a$ for all $\Q\in \mathbb{Z}^k$ and $a\in T_\Lambda$. If $f$ is surjective, then $\rho$ is an isomorphism. 
\end{prop}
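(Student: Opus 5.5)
We will define $\rho$ on generators and check the relations. The vertices of $f^*(\Lambda)$ are the pairs $(v,0)$ with $v\in\Lambda^0$, since $d((v,\N))=\N$ forces $\N=0$ for an identity morphism. For each $\N\in\mathbb{N}^k$ and $v\in\Lambda^0$, the map $(\lambda,\N)\mapsto\lambda$ is a bijection from $(v,0)f^*(\Lambda)^{\N}$ onto $v\Lambda^{f(\N)}$. It preserves sources, in the sense that $s((\lambda,\N))=(s(\lambda),0)$. This bijection is the basic tool throughout.

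\emph{Definition and equivariance of $\rho$.} Extend $f$ to a group homomorphism $\mathbb{Z}^k\to\mathbb{Z}^\ell$, still denoted $f$, and set $\rho((v,0)(\N)):=v(f(\N))$. Since $\Lambda$ has no sources, we may use the relations $v(\M)=\sum_{\alpha\in v\Lambda^{\bP}}s(\alpha)(\M+\bP)$ for every $\bP\in\mathbb{N}^\ell$; when $\bP=0$ this simply reads $v(\M)=v(\M)$. Apply $\rho$ to a defining relation $(v,0)(\N)=\sum s((\lambda,\mathbf{e}_i))(\N+\mathbf{e}_i)$ of $T_{f^*(\Lambda)}$. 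The bijection above turns the right-hand side into $\sum_{\lambda\in v\Lambda^{f(\mathbf{e}_i)}}s(\lambda)(f(\N)+f(\mathbf{e}_i))$, which equals $v(f(\N))$ in $T_\Lambda$. Hence $\rho$ is a well-defined monoid homomorphism.

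On generators, $\rho({}^{\Q}x)=v(f(\N)+f(\Q))={}^{f(\Q)}\rho(x)$. This is precisely the statement that both $(\mathrm{id},\rho)$, with the $f$-twisted action on $T_\Lambda$, and $(f,\rho)$ are morphisms in \textbf{Grp-Mon}. Commutativity of the triangle is then immediate, because $(f,\mathrm{id})\circ(\mathrm{id},\rho)=(f,\rho)$.

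\emph{The surjective case.} If $f$ is surjective, then for every $j$ there is an index $\sigma(j)$ with $f(\mathbf{e}_{\sigma(j)})=\mathbf{e}_j$. In particular $f:\mathbb{Z}^k\to\mathbb{Z}^\ell$ is onto, so $\rho$ is surjective on generators. For the inverse, define $\psi(v(\M)):=(v,0)(\N)$ for any $\N\in\mathbb{Z}^k$ with $f(\N)=\M$. The main obstacle is showing that this is independent of the choice of $\N$. For each $i$, put $\mathbf{b}_i:=\sum_j f(\mathbf{e}_i)_j\,\mathbf{e}_{\sigma(j)}\in\mathbb{N}^k$, so that $f(\mathbf{b}_i)=f(\mathbf{e}_i)$. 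The kernel of $f$ on $\mathbb{Z}^k$ is generated by the vectors $\mathbf{e}_i-\mathbf{b}_i$: modulo these, every $\N$ is congruent to an element of the span of the $\mathbf{e}_{\sigma(j)}$, on which $f$ is injective. It therefore suffices to prove $(w,0)(\M+\mathbf{e}_i)=(w,0)(\M+\mathbf{b}_i)$ for all $w$ and $\M$.

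To prove this identity, expand the left side along degree $\mathbf{b}_i$ and the right side along degree $\mathbf{e}_i$. Both sums are then indexed by $w\Lambda^{f(\mathbf{e}_i)}=w\Lambda^{f(\mathbf{b}_i)}$, and both have the form $\sum_\mu(s(\mu),0)(\M+\mathbf{e}_i+\mathbf{b}_i)$. They therefore coincide.

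Next, $\psi$ respects the relations of $T_\Lambda$. Given $v(\M)=\sum_{\alpha\in v\Lambda^{\mathbf{e}_j}}s(\alpha)(\M+\mathbf{e}_j)$, choose $\N$ with $f(\N)=\M$. Then use $\mathbf{e}_{\sigma(j)}$ as the degree-$\mathbf{e}_{\sigma(j)}$ relation in $f^*(\Lambda)$, whose paths correspond via the bijection to $v\Lambda^{\mathbf{e}_j}$. So $\psi$ is a well-defined homomorphism. Finally, $\psi\rho$ and $\rho\psi$ are the identity on generators, so $\rho$ is an isomorphism.
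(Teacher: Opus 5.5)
Your proof is correct. For the isomorphism part it takes a genuinely different route from the paper.

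The definition of $\rho$ and the equivariance check match the paper. The paper takes well-definedness from earlier work, whereas you verify it directly through the bijection $(v,0)f^*(\Lambda)^{\N}\leftrightarrow v\Lambda^{f(\N)}$.

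For the surjective case, the paper gets surjectivity of $\rho$ from surjectivity of $f$ on $\mathbb{Z}^k$. It then proves injectivity by passing to the skew-product monoid $M_{\overline{\Lambda}}$:
\begin{enumerate}
\item it places both elements at a common state $\N\in\mathbb{N}^k$;
\item it applies the confluence lemma to get a common descendant $\sum_l (u_l,\Q)$;
\item it lifts $\Q-f(\N)$ to some $\bT\in\mathbb{N}^k$;
\item it argues, somewhat informally, that the flow in $\overline{\Lambda}$ can be mirrored in $\overline{f^*(\Lambda)}$.
\end{enumerate}

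You instead build an explicit inverse $\psi$ on the presentation of $T_\Lambda$. This reduces the whole difficulty to showing that $\ker(f\colon\mathbb{Z}^k\to\mathbb{Z}^\ell)$ acts trivially on $T_{f^*(\Lambda)}$. Your kernel generators $\mathbf{e}_i-\mathbf{b}_i$, together with the double expansion along the degrees $\mathbf{b}_i$ and $\mathbf{e}_i$ (which have the same image under $f$), settle this in one line.

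What your route buys:
\begin{itemize}
\item it is elementary and self-contained, needing neither the isomorphism $T_\Lambda\cong M_{\overline{\Lambda}}$ nor confluence;
\item it avoids tracking flows;
\item it exposes the structural reason for the result: $T_{f^*(\Lambda)}$ is simply $T_\Lambda$ with $\mathbb{Z}^k$ acting through $f$.
\end{itemize}
The paper's route has the advantage of staying inside the skew-product/confluence toolkit it reuses throughout Section~4.

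Two small points deserve a sentence each in a write-up.
\begin{itemize}
\item The existence of $\sigma$. If $\sum_i n_i f(\mathbf{e}_i)=\mathbf{e}_j$ in $\mathbb{N}^\ell$, then indecomposability of $\mathbf{e}_j$ forces exactly one summand $n_if(\mathbf{e}_i)$ to be nonzero. That gives $n_i=1$ and $f(\mathbf{e}_i)=\mathbf{e}_j$.
\item The fact that $f^*(\Lambda)$ is row-finite without sources. This is immediate from your bijection, and it is what licenses the higher-degree relations (e.g.\ along $\mathbf{b}_i$) that you use in $T_{f^*(\Lambda)}$.
\end{itemize}
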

\begin{proof}
Define $\rho:T_{f^*(\Lambda)}\longrightarrow T_\Lambda$ by $\rho(v(\N)):=v(f(\N))$ and then extend linearly to all of $T_{f^*(\Lambda)}$. It follows from the proof of \cite[Proposition 4.8(ii)]{HMPS2} that $\rho$ is a well-defined monoid homomorphism. For any $\N\in \mathbb{Z}^k$ and $a=\displaystyle{\sum_{i=1}^{t}}v_i(\M_i)\in T_{f^*(\Lambda)}$, \[\rho(^\N a)=\rho\left(\displaystyle{\sum_{i=1}^{t}} v_i(\M_i+\N)\right)=\displaystyle{\sum_{i=1}^{t}}v_i(f(\M_i)+f(\N))=~^{f(\N)}\rho(a),\] which shows that $(f,\rho)$ is equivariant. The commutativity of the diagram is now obvious. Suppose $f$ is surjective and choose any $b=\displaystyle{\sum_{i=1}^{t}}w_i(\M_i)\in T_\Lambda)$. Then there exist $\N_i\in \mathbb{Z}^k$, $i=1,2,\ldots,t$ such that $f(\N_i)=\M_i$. Clearly, $\rho\left(\displaystyle{\sum_{i=1}^{t}} v_i(\N_i)\right)=b$, showing that $\rho$ is surjective. Thus, it only remains to show that $\rho$ is injective. So assume that $x,y\in T_{f^*(\Lambda)}$ such that $\rho(x)=\rho(y)$. Using the fact that $f^*(\Lambda)$ has no sources (which is evident since $\Lambda$ has no sources), we can write $x=\displaystyle{\sum_{i=1}^{p}} v_i(\N)$, $y=\displaystyle{\sum_{j=1}^{q}}w_j(\N)$ for some vertices $v_i,w_j\in f^*(\Lambda)^0$ and some $\N\in \mathbb{N}^k$. Now, we have 
\begin{equation*}
\displaystyle{\sum_{i=1}^{p}}v_i(f(\N))=\displaystyle{\sum_{j=1}^{q}}w_j(f(\N))
\end{equation*}
in $T_\Lambda$. Transferring this equality to the $\ell$-graph monoid $M_{\overline{\Lambda}}$ and using the confluence lemma (\cite[Lemma 3.5]{HMPS1}), we have a $\gamma\in \mathbb{F}_{\overline{\Lambda}}\setminus \{0\}$ such that \[\displaystyle{\sum_{i=1}^{p}}(v_i,f(\N))\longrightarrow \gamma,~\displaystyle{\sum_{j=1}^{q}}(w_j,f(\N))\longrightarrow \gamma.\] By flowing using the relations $\longrightarrow_{\mathbf{e}_i}$, if required, we can always write $\gamma=\displaystyle{\sum_{l=1}^{r}} (u_l,\Q)$ for some $\Q\in \mathbb{N}^\ell$ with $\Q\ge f(\N)$. Now it is not hard to observe (see the proof of Proposition \ref{pro neccesary condition for loop with entrance} below) that 
\begin{equation}\label{eq triple}
\displaystyle{\sum_{i=1}^{p}}\left(\displaystyle{\sum_{\alpha\in (v_i,f(\N))\overline{\Lambda}^{\Q-f(\N)}}}\overline{s}(\alpha)\right)=\displaystyle{\sum_{l=1}^{r}} (u_l,\Q)=\displaystyle{\sum_{j=1}^{q}}\left(\displaystyle{\sum_{\beta\in (w_j,f(\N))\overline{\Lambda}^{\Q-f(\N)}}}\overline{s}(\beta)\right)
\end{equation}
in $\mathbb{F}_{\overline{\Lambda}}$. We choose any $\bT\in \mathbb{N}^k$ such that $f(\bT)=\Q-f(\N)$. Now for any $l\in \{1,2,\ldots,r\}$, using (\ref{eq triple}), we can say that there exists $i\in \{1,2,\ldots,p\}$ and $(\lambda,f(\N))\in (v_i,f(\N))\overline{\Lambda}^{\Q-f(\N)}$ such that $(u_l,\Q)=\overline{s}((\lambda,f(\N)))$. Then $u_l=s(\lambda)$, $v_i=r(\lambda)$ and $\Q=d(\lambda)+f(\N)$. Therefore, $((\lambda,\bT),\N)\in (v_i,\N)\overline{f^*(\Lambda)}^\bT$ and $(u_l,\N+\bT)=\overline{s}(((\lambda,\bT),\N))$. On the other hand, for any $i\in \{1,2,\ldots,p\}$ and $((\mu,t),\N)\in (v_i,\N)\overline{f^*(\Lambda)}^\bT$, $r(\mu)=r((\mu,\bT))=v_i$ and $d(\mu)=f(\bT)=\Q-f(\N)$. Hence, $(\mu,f(\N))\in (v_i,f(\N))\overline{\Lambda}^{f(\bT)}$ and consequently, in view of (\ref{eq triple}), there exists $l\in \{1,2,\ldots,r\}$, such that $\overline{s}((\mu,f(\N)))=(u_l,\Q)$. This implies $s(\mu)=u_l$, whence $\overline{s}((\mu,\bT),\N)=(s(\mu),\N+\bT)=(u_l,\N+\bT)$. These observations show that 
\begin{equation}\label{eq ref1}
\displaystyle{\sum_{l=1}^{r}}(u_l,\N+\bT)=\displaystyle{\sum_{i=1}^{p}}\left(\displaystyle{\sum_{\alpha\in (v_i,\N)\overline{f^*(\Lambda)}^\bT}} \overline{s}(\alpha)\right).
\end{equation}
in $\mathbb{F}_{\overline{f^*(\Lambda)}}$. Similarly, using the second equality in (\ref{eq triple}), it can be shown that
\begin{equation}\label{eq ref2}
\displaystyle{\sum_{l=1}^{r}}(u_l,\N+\bT)=\displaystyle{\sum_{j=1}^{q}}\left(\displaystyle{\sum_{\beta\in (w_j,\N)\overline{f^*(\Lambda)}^\bT}} \overline{s}(\beta)\right).
\end{equation}
Now, using (\ref{eq ref1}) and (\ref{eq ref2}), we have \[\displaystyle{\sum_{i=1}^{p}} v_i(\N)=\displaystyle{\sum_{i=1}^{p}}\left(\displaystyle{\sum_{\alpha\in v_i f^*(\Lambda)^\bT}}s(\alpha)(\N+\bT)\right)=\displaystyle{\sum_{l=1}^{r}}u_l(\N+\bT)=\displaystyle{\sum_{j=1}^{q}}\left(\displaystyle{\sum_{\beta\in w_j f^*(\Lambda)^\bT}}s(\beta)(\N+\bT)\right)=\displaystyle{\sum_{j=1}^{q}} w_j(\N).\] Hence, $\rho$ is injective. 
\end{proof}

\section{Loops and generalized cycles through the lens of the talented monoid}\label{sec loop classification}

Let us start this section with the following motivating example.
\begin{example}\label{ex motivating example}
Consider the following colored directed graph:
\[
\begin{tikzpicture}[scale=1]
\node[circle,draw,fill=black,inner sep=0.5pt] (p11) at (0, 0) {$.$} 

edge[-latex, blue,thick,loop, in=10, out=90, looseness=8, distance=2cm] (p11)
edge[-latex, blue,thick, loop, out=90, in=170, looseness=8, distance=2cm] (p11)

edge[-latex, red,thick, loop, dashed, out=225, in=-45, looseness=8, distance=2cm] (p11);
\node at (0, -0.5) {$v$};

\node at (-1.5,1) {$e_1$};
\node at (1.5,1) {$e_2$};
\node at (0.8,-1) {$h$};

\end{tikzpicture}
\]
We can put two different factorizations for the bicolored paths of this $2$-colored graph and consequently, obtain two non-isomorphic $2$-graphs. However, as illustrated in \cite[Example 7.1]{HMPS2}, their talented monoids coincide and both are isomorphic to $\mathbb{N}[1/2]$. 

Let us consider the $2$-graph $\Lambda$ with the above skeleton and with any one of the two factorizations. Then one can observe the following.

\begin{itemize}
\item[$(i)$] In $\Lambda$, the loop $e_1$ having degree $(1,0)$ has an entrance, namely the loop $e_2$. In $T_\Lambda$, we have $v=v(0)=2v((1,0))$ and so $^{(1,0)}v<v$. 

\item[$(ii)$] The loop $h$ having degree $(0,1)$ has no entrance. In $T_\Lambda$, the element $v=v(0)$ is periodic: $^{(0,1)}v=v$.
\end{itemize}
\end{example}

The above toy example gives a hint that the talented monoid may be effective to detect the existence of loops with and without entrance. To prove this is the main goal of this section (see Theorem \ref{th characterizing loops via TM}). Before going to accomplish this, we make the following remark, again based on the above example, to exhibit some of the anomalies one may encounter in the relationship between the geometries of a general $k$-graph and the properties of its talented monoid, which do not arise in the level of $1$-graphs.
\begin{rmk}
$(i)$ Suppose $E$ is a row-finite directed graph and $T_E$ is its talented monoid. For $a\in T_E$, it cannot happen simultaneously that $^n a=a$ and $^m a< a$ for some $n,m\in \mathbb{N}\setminus \{0\}$ and $n\neq m$. The justification is as follows: if $n>m$, then $^m a<a$ implies $^n a<~^{n-m} a$. But since $^n a=a$, this will give $a<~^{n-m}a$ which is not possible by \cite[Lemma 4.1]{hazli}. On the other hand, if $n<m$, then writing $m=nq+r$ for some $q\in \mathbb{N}\setminus \{0\}$ and $0\le r<n$, we get $^m a=^r a$ and hence $^r a<a$ for some $r<n$, which is not possible as explained in the preceding argument. Note that in the talented monoid $T_\Lambda$, where $\Lambda$ is the $2$-graph of Example \ref{ex motivating example}, we have $^{(0,1)}v=v$ and $^{(1,0)}v<v$. This weird dual behavior of $v$ can be justified in the light of Proposition \ref{pro TM of product hrg}. Observe that if the factorization of $\Lambda$ is given as $e_1 h=he_1$ and $e_2 h=h e_2$, then $\Lambda$ is isomorphic to the categorical product of the $1$-graphs $R_2^*$ and $R_1^*$. If we let $u_1$ and $u_2$ to be the sole vertices of $R_1$ and $R_2$, respectively, then under this isomorphism $v$ corresponds to $(u_2,u_1)$. Now by Proposition \ref{pro TM of product hrg}, $T_\Lambda\cong T_{R_2}\otimes T_{R_1}$ as $\mathbb{Z}^2$-monoids. Note that $^{(0,1)}(u_2\otimes u_1)=(u_2(0)\otimes u_1(1))=(u_2\otimes u_1)$ and hence, $^{(0,1)}v=v$. Again, $^{(1,0)}(u_2\otimes u_1)=(u_2(1)\otimes u_1(0))<(u_2(1)\otimes u_1(0))+(u_2(1)\otimes u_1(0))=((u_2(1)+u_2(1),u_1(0))=(u_2\otimes u_1)$ which implies $^{(1,0)}v<v$.  

$(ii)$ For any row-finite directed graph $E$, \cite[Lemma 5.6]{hazli} says that the cycles without exists correspond to minimal periodic elements in $T_E$ and vice-versa. For general $k$-graphs, although minimal periodic element in the talented monoid gives loop without entrance the converse is not true. In the $2$-graph $\Lambda$ of Example \ref{ex motivating example}, the loop $h$ has no entrance. But there are no minimal elements in $T_\Lambda$ since $v(\N+\mathbf{e}_1)<v(\N)$ for all $\N\in \mathbb{Z}^2$.  
\end{rmk}

\subsection{Loops with and without entrances}\label{ssec loops}
This subsection offers a complete classification of different types of loops (with and without sources) in a higher-rank graph in terms of the corresponding talented monoid. The results obtained here will be used in the coming sections.

We start by proving the following proposition which gives a necessary condition for every loop in a $k$-graph to have an entrance.
\begin{prop}\label{pro sufficient condition for loop with entrance}
Let $\Lambda$ be a row-finite $k$-graph without sources. Suppose for each $0\neq a\in T_\Lambda$ and $\N\in \mathbb{Z}_{<0}^k$, $^\N a\neq a$. Then every nontrivial loop in $\Lambda$ has an entrance.
\end{prop}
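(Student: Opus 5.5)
We argue by contraposition. Suppose $\mu$ is a nontrivial loop in $\Lambda$ with no entrance, and set $v:=s(\mu)=r(\mu)$ and $\M:=d(\mu)\in\mathbb{N}^k\setminus\{0\}$. We will produce a nonzero $a\in T_\Lambda$ and an $\N\in\mathbb{Z}^k_{<0}$ with $^\N a=a$; the natural candidates are $a=v(0)$ and $\N=-\M$.

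The first step is to show that $v\Lambda^{\M}=\{\mu\}$. Let $\lambda\in v\Lambda^{\M}$. Then $d(\lambda)=\M\le d(\mu)$. If $\lambda\neq\mu$, then $\mu(0,d(\lambda))=\mu\neq\lambda$, so $\lambda$ would be an entrance to $\mu$ in the sense of Definition \ref{def loop with entrance}. This contradicts the assumption. The set is nonempty because it contains $\mu$, which also follows from $\Lambda$ having no sources.

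The second step is to apply the defining relations. Since $\Lambda$ has no sources, the relation \eqref{TM equation} holds in its iterated form $v(\mathbf{0})=\sum_{\alpha\in v\Lambda^{\M}}s(\alpha)(\M)$. By the first step this sum has the single term $s(\mu)(\M)=v(\M)$, so $v(\mathbf{0})=v(\M)={}^{\M}v(\mathbf{0})$. Acting by $-\M$ gives ${}^{-\M}v(\mathbf{0})=v(\mathbf{0})$.

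It remains to check two things. First, $-\M\in\mathbb{Z}^k_{<0}$: every coordinate of $-\M$ is nonpositive, and $-\M\neq 0$ because $\mu$ is nontrivial. This uses the paper's definition of $\mathbb{Z}^k_{<0}$ as the nonpositive tuples other than zero. Second, $v(\mathbf{0})\neq 0$ in $T_\Lambda$. Under the $\mathbb{Z}^k$-isomorphism $T_\Lambda\cong\mathcal{V}^{\gr}(\KP_\FF(\Lambda))$, $v(\mathbf{0})$ corresponds to $[\KP_\FF(\Lambda)p_v]$, and $p_v\neq 0$ in the Kumjian--Pask algebra, as shown in \cite{Pino}. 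Alternatively, $T_\Lambda\cong M_{\overline{\Lambda}}$ is conical and no relation sends a nonzero generator to $0$ when there are no sources. In either case $v(\mathbf{0})$ is a nonzero element fixed by ${}^{-\M}$, which contradicts the hypothesis. This establishes the proposition.

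The main point needing care is the first step, namely that a path of the same degree $\M$ starting at $v$ and different from $\mu$ really counts as an entrance. This follows directly from Definition \ref{def loop with entrance} with $d(\lambda)=d(\mu)$. The nonvanishing of $v(\mathbf{0})$ is routine.
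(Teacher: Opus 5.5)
Your proof is correct and follows the paper's argument: no entrance forces $s(\mu)\Lambda^{d(\mu)}=\{\mu\}$, so the defining relation collapses to $s(\mu)(\mathbf{0})=s(\mu)(d(\mu))$, giving ${}^{-d(\mu)}s(\mu)=s(\mu)$ with $-d(\mu)\in\mathbb{Z}^k_{<0}$. Your explicit check that $v(\mathbf{0})\neq 0$ in $T_\Lambda$ is a detail the paper leaves implicit.
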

\begin{proof}
For the sake of a contradiction, assume that there is a nontrivial loop $\mu$ with no entrance. Let $\lambda\in s(\mu)\Lambda^{d(\mu)}$. Since $\mu$ has no entrance and $d(\mu)=d(\lambda)$, $\mu=\mu(0,d(\mu))=\mu(0,d(\lambda))=\lambda$. Therefore, $s(\mu)\Lambda^{d(\mu)}=\{\mu\}$, whence \[s(\mu)=\displaystyle{\sum_{\lambda\in s(\mu)\Lambda^{d(\mu)}}} s(\lambda)(d(\mu))=s(\mu)(d(\mu)).\] in $T_\Lambda$. This implies $^{-d(\mu)}s(\mu)=s(\mu)$, a contradiction to our assumption since $-d(\mu)\in \mathbb{Z}_{<0}^k$. 
\end{proof}

\begin{prop}\label{pro neccesary condition for loop with entrance}
Let $\Lambda$ be a row-finite $k$-graph without sources. Suppose every nontrivial loop in $\Lambda$ has an entrance. Then $^\N a\neq a$ for all $0\neq a\in T_\Lambda$ and $\N\in \mathbb{Z}_{<0}^k$.     
\end{prop}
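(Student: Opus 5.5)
We argue by contraposition. Suppose there are $0\neq a\in T_\Lambda$ and $\N\in\mathbb{Z}^k_{<0}$ with $^\N a=a$, and put $\M:=-\N\in\mathbb{N}^k\setminus\{0\}$. Applying $^{\M}$ gives $a={}^{\M}a$, and iterating gives $a={}^{j\M}a$ for all $j\ge 1$. We aim to produce a nontrivial loop, of degree a multiple of $\M$, that has no entrance.

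\textbf{Step 1 (reduction to the free monoid).} Since $\Lambda$ has no sources, we may write $a=\sum_{i=1}^{p}v_i(\bP)$ with a common $\bP\in\mathbb{N}^k$. We transfer $a={}^{\M}a$ to $M_{\overline\Lambda}$ via $\phi$, exactly as in the proof of Proposition~\ref{pro TM under pullbacks}. The confluence lemma \cite[Lemma 3.5]{HMPS1}, followed by flowing to a common level $\Q$, yields an equality in the free monoid $\mathbb{F}_{\overline\Lambda}$. It says that the multiset of sources of paths of degree $\Q-\bP$ starting at the $v_i$ equals the multiset of sources of paths of degree $\Q-\bP-\M$ starting at the $v_i$.

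\textbf{Step 2 (counting).} For a vertex set $S$ and $\bT\in\mathbb{N}^k$, write $N_S(\bT)$ for the number of pairs $(i,\lambda)$ with $\lambda\in v_i\Lambda^{\bT}$. Step 1 forces $N(\Q-\bP)=N(\Q-\bP-\M)$. Now $\bT\mapsto N(\bT)$ is monotone nondecreasing, because there are no sources: each path extends to at least one path of larger degree. Hence, along the chain $\Q-\bP-\M\le \Q-\bP$, every path of degree $\Q-\bP-\M$ from some $v_i$ has \emph{exactly one} extension of degree $\Q-\bP$. Repeating this with $a={}^{j\M}a$ for all $j$ (choosing a level $\Q_j$ each time) shows the following. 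Every vertex $w$ occurring at a deep level of the expansion of $a$ satisfies $|w\Lambda^{j\M}|=1$ for all $j$, and hence $|w\Lambda^{\bT}|=1$ for all $\bT\le j\M$.

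\textbf{Step 3 (extracting a loop without entrance).} Fix such a $w$ and follow its unique path of degree $j\M$. Its vertices at degrees $0,\M,2\M,\dots$ all lie in the set of vertices reached from the finite set $\{v_i\}$ at deep levels, and they again have unique extensions. We need to show that this sequence of vertices repeats. For this we use that the multiset sizes are bounded and that the expansion of $a$ at level $\bP+j\M$ agrees, as a multiset, with the expansion at level $\bP$ pushed forward, both being constrained by the equality of Step 1. If $w'=\mu(r\M)=\mu(s\M)$ with $r<s$, then the segment $\mu(r\M,s\M)$ is a nontrivial loop of degree $(s-r)\M$. Its source has a unique path of each degree $\le(s-r)\M$, so the loop has no entrance. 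This contradicts the hypothesis.

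I expect the main obstacle to be the pigeonhole in Step 3, namely ensuring that the unique forward path from $w$ revisits a vertex. The first thing I would try is to note that, by Step 2, the map ``unique degree-$\M$ extension'' sends the finite multiset $X_j$ (the expansion at level $\bP+j\M$) into $X_{j+1}$. Both $X_j$ and $X_{j+1}$ are equal in $T_\Lambda$ and have the same cardinality $N$. It would then remain to show that, after cancelling common terms via confluence, the supports stabilize to a finite set of vertices. On a finite set, the deterministic successor map must have a cycle, which gives the loop.
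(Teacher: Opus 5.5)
Your Steps 1 and 2 are sound and set up essentially what the paper uses. The paper writes $\gamma=\sum_j(u_j,\Q)$ and shows $\sum_j(u_j,\Q+\N)\to\sum_j(u_j,\Q)$ with equally many summands. But Step 3, which you flag yourself, is the heart of the proof and is not supplied, and the routes you sketch do not work as stated.

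\begin{itemize}
\item In Step 2, each $j$ produces its own confluence level $\Q_j$. Nothing prevents these levels from escaping to infinity, so no single ``deep level'' valid for all $j$ has been found.
\item In Step 3 you work with the expansions $X_j$ at levels $j\M$ and use only that they are equal in $T_\Lambda$. Equality in $T_\Lambda$ gives no control over supports. Since $\Lambda^0$ may be infinite, the forward path from $w$ could a priori visit infinitely many vertices.
\item Your claim that $X_j$ and $X_{j+1}$ have the same cardinality is false for small $j$. If $v\Lambda^{\mathbf{e}_1}$ consists of two edges whose sources are the two vertices of a $2$-cycle, then ${}^{\mathbf{e}_1}v=v$, but the counts at levels $0$ and $\mathbf{e}_1$ are $1$ and $2$.
\end{itemize}
The hoped-for ``stabilization of supports'' is exactly what needs proof.

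The missing idea is already in your Step 1, provided you use it as an identity in $\mathbb{F}_{\overline\Lambda}$ rather than in $T_\Lambda$. Put $\bT_0:=\Q-\bP-\M$, and let $X(\bT)$ be the multiset of sources of paths of degree $\bT$ from the $v_i$. Step 1 says $X(\bT_0+\M)=X(\bT_0)$ vertex by vertex. Step 2 says every $w$ in the finite set $S:=\operatorname{supp}X(\bT_0)$ has a unique path in $w\Lambda^{\M}$; write $\sigma(w)$ for its source. Then $X(\bT_0+\M)$ is the image multiset of $X(\bT_0)$ under $\sigma$. Hence $\sigma(S)=S$, and $\sigma$ is a permutation of the finite set $S$; this is the permutation $\pi$ in the paper's proof. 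Choose $r\ge 1$ with $\sigma^r(w)=w$ and concatenate the unique $\M$-paths. This gives a loop of degree $r\M\neq 0$ at $w$. Every $\sigma^i(w)$ lies in $S$, so unique factorization gives $|w\Lambda^{r\M}|=1$. Hence $|w\Lambda^{\bT}|=1$ for all $\bT\le r\M$, and the loop has no entrance. No iteration over $j$ and no further confluence is needed.

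Finally, you should not justify Step 1 by citing the proof of Proposition~\ref{pro TM under pullbacks}, since that proof defers to the present one for exactly this identity. Include the short path-tracking argument showing that a $\gamma$ reached by flowing, with all its vertices at a common state $\Q$, equals the full expansion.
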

\begin{proof}
If possible, suppose there is a nonzero element $a\in T_\Lambda$ and $\N\in \mathbb{Z}_{<0}^k$ such that $^\N a=a$. Since $\Lambda$ has no sources, we can always write $a=\displaystyle{\sum_{t=1}^{l}} v_t(\M)$, for some $\M\in \mathbb{N}^k$. Then $^{-\M}a=\displaystyle{\sum_{t=1}^{l}} v_t$. Let $b=\displaystyle{\sum_{t=1}^{l}} v_t$. Then $^\N b=^\N(^{-\M}a)=^{-\M}(^\N a)=b$. Passing this equality to $M_{\overline{\Lambda}}$ (using the isomorphism of \cite[Theorem 3.6 $(ii)$]{HMPS1}), we have \[\displaystyle{\sum_{t=1}^{l}} (v_t,\N)=\displaystyle{\sum_{t=1}^{l}} (v_t,0)\] in $M_{\overline{\Lambda}}$. By applying the confluence lemma (\cite[Lemma 3.5]{HMPS1}), we can say that there exists $\gamma\in \mathbb{F}_{\overline{\Lambda}} \setminus \{0\}$ such that \[\displaystyle{\sum_{t=1}^{l}} (v_t,\N)\longrightarrow \gamma,~~\displaystyle{\sum_{t=1}^{l}} (v_t,0)\longrightarrow \gamma.\] Again using the fact that $\Lambda$ has no sources, we can write $\gamma=\displaystyle{\sum_{j=1}^{p}} (u_j,\mathbf{q})$ for vertices $u_j\in \Lambda^0$ and some $\mathbf{q}\in \mathbb{N}^k$. Now, from the definition of the relation $\longrightarrow$ and the fact that $\overline{\Lambda}$ repeats the same pattern in each state, we have \
\begin{equation}\label{eq transition}
\displaystyle{\sum_{t=1}^{l}} (v_t,\N)\longrightarrow \displaystyle{\sum_{j=1}^{p}} (u_j,\mathbf{q}+\N).    
\end{equation}
We claim that \[\displaystyle{\sum_{j=1}^{p}} (u_j,\mathbf{q}+\N)=\displaystyle{\sum_{t=1}^{l}}\left(\displaystyle{\sum_{\alpha\in (v_t,\N)\overline{\Lambda}^\mathbf{q}}} \overline{s}(\alpha)\right).\] From the definition of $\longrightarrow$, it is clear that each $(u_j,\mathbf{q}+\N)$ is the source of some $\alpha\in (v_t,\N)\overline{\Lambda}^\mathbf{q}$ for some $t\in \{1,2,\ldots,l\}$. Fix any $t\in \{1,2,\ldots,l\}$ and choose $\alpha\in (v_t,\N)\overline{\Lambda}^\mathbf{q}$. Suppose in the transition $(\ref{eq transition})$, $(v_t,\N)$ first flow using the relation $\longrightarrow_{i_1}$. Then \[(v_t,\N)\longrightarrow_{i_1} \overline{s}(\alpha(0,\mathbf{e}_{i_1}))+x_1\] for some $x_1\in \mathbb{F}_{\overline{\Lambda}}$. If $\mathbf{q}=\mathbf{e}_{i_1}$, then we are done. Otherwise, we follow the flow of $\overline{s}(\alpha(0,\mathbf{e}_{i_1}))$ in the transition $(\ref{eq transition})$. Suppose it has flown in the direction of $\mathbf{e}_{i_2}$. Then \[(v_t,\N)\longrightarrow_{i_1} \overline{s}(\alpha(0,\mathbf{e}_{i_1}))+x_1\longrightarrow_{i_2} \overline{s}(\alpha(0,\mathbf{e}_{i_1}+\mathbf{e}_{i_2}))+x_2\] for some $x_2\in \mathbb{F}_{\overline{\Lambda}}$. Again, if $\mathbf{q}=\mathbf{e}_{i_1}+\mathbf{e}_{i_2}$, then we are done. If not, we continue the process of following the next flow. Since the state of each vertex in the final position is the same ($\mathbf{q}+\N$), every vertex in the intermediate state will flow in some direction until the transformed state becomes $\mathbf{q}+\N$. Thus, proceeding in this way, after the complete transition, we will have \[(v_t,\N)\longrightarrow_{i_1} \overline{s}(\alpha(0,\mathbf{e}_{i_1}))+x_1\longrightarrow_{i_2} \overline{s}(\alpha(0,\mathbf{e}_{i_1}+\mathbf{e}_{i_2}))+x_2\longrightarrow_{i_3}\cdots \longrightarrow_{i_m} \overline{s}(\alpha)+x_m.\] Thus, $\overline{s}(\alpha)$ is one of the $(u_j,\mathbf{q}+n)$. Since, $\displaystyle{\sum_{t=1}^{l}} (v_t,\N)\longrightarrow \gamma$, the same argument gives \[\displaystyle{\sum_{j=1}^{p}} (u_j,\mathbf{q})=\displaystyle{\sum_{t=1}^{l}}\left(\displaystyle{\sum_{\beta\in (v_t,\N)\overline{\Lambda}^{\mathbf{q}-\N}}} \overline{s}(\beta)\right).\] But it is evident that \[\displaystyle{\sum_{t=1}^{l}}\left(\displaystyle{\sum_{\alpha\in (v_t,\N)\overline{\Lambda}^\mathbf{q}}} \overline{s}(\alpha)\right)\longrightarrow \displaystyle{\sum_{t=1}^{l}}\left(\displaystyle{\sum_{\beta\in (v_t,\N)\overline{\Lambda}^{\mathbf{q}-\N}}} \overline{s}(\beta)\right)\] since $\mathbf{q}-\N> \mathbf{q}$, and for any $t\in \{1,2,\ldots,l\}$ and any path $\gamma\in (v_t,\N)\overline{\Lambda}^{\mathbf{q}-\N}$, $\gamma(0,\mathbf{q})\in (v_t,\N)\overline{\Lambda}^\mathbf{q}$. Therefore, 
\begin{equation}\label{eq transame}
\displaystyle{\sum_{j=1}^{p}} (u_j,\mathbf{q}+\N)\longrightarrow \displaystyle{\sum_{j=1}^{p}} (u_j,\mathbf{q}).    
\end{equation}
Since, the number of summand remains the same in the transition $(\ref{eq transame})$, it follows that each vertex in the list (possibly multiset) $A:=\{(u_j,\mathbf{q}+\N)~|~j=1,2,\ldots,p
\}$ is reached (via a path of degree $-\N$) from a \emph{unique} vertex in the list (possibly multiset) $B:=\{(u_j,\mathbf{q})~|~j=1,2,\ldots,p\}$. So we have a permutation $\pi\in S_l$ such that $(u_{\pi(j)},\mathbf{q}+\N)\longrightarrow (u_j,\mathbf{q})$ for all $j=1,2,\ldots,p$ or more precisely, for each $j\in \{1,2,\ldots,p\}$, there is exactly one path in $(u_{\pi(j)},\mathbf{q}+\N)\overline{\Lambda}^{-\N}$ say, $(\lambda_j,\mathbf{q}+\N)$ and $\overline{s}(\lambda_j,\mathbf{q}+\N)=(u_j,\mathbf{q})$. Let $o(\pi)=\nu$. Since $\overline{\Lambda}$ repeats the pattern in each state, we have a unique path \[(\lambda_{\pi^{\nu-1}(j)},\mathbf{q}+\nu \N)(\lambda_{\pi^{\nu-2}(j)},\mathbf{q}+(\nu-1) \N)\cdots (\lambda_{\pi(j)},\mathbf{q}+2 \N)(\lambda_{j},\mathbf{q}+\N)\] of degree $-\nu \N$ with range $(u_{\pi^\nu(j)},\mathbf{q}+\nu\N)=(u_j,\mathbf{q}+\nu \N)$ and the source of this path is $(u_j,\mathbf{q})$. Thus, $u_j(\mathbf{q}+\nu \N)=u_j(\mathbf{q})$ in $T_{\Lambda}$ and there is a nontrivial loop $\mu_j\in \Lambda^{-\nu \N}$ with $s(\mu_j)=r(\mu_j)=u_j$, namely \[\mu_j=\lambda_{\pi^{\nu-1}(j)}\lambda_{\pi^{\nu-2}(j)}\cdots \lambda_{\pi(j)}\lambda_{j}.\] If this loop has an entrance $\lambda$, then for any $\tau\in s(\lambda)\Lambda^{-\nu \N-d(\lambda)}$, since $\mu_j(0,d(\lambda))\neq \lambda=(\lambda\tau)(0,d(\lambda))$, $\mu_j\neq \lambda\tau$. Consequently, in $T_\Lambda$, \[u_j(\mathbf{q}+\nu \N)=\displaystyle{\sum_{\alpha\in u_j\Lambda^{-\nu \N}}} s(\alpha)(\mathbf{q})\ge u_j(\mathbf{q})+s(\tau)(\mathbf{q})> u_j(\mathbf{q}).\] But it is not possible as we have $u_j(\mathbf{q}+\nu \N)=u_j(\mathbf{q})$. Therefore, the loop $\mu_j$ has no entrance which contradicts the hypothesis. Hence, $^\N a\neq a$ for all $0\neq a\in T_\Lambda$ and $\N\in \mathbb{Z}_{<0}^k$. 
\end{proof}

We classify the existence of different kinds of loops (with and without entrance) in a $k$-graph via the talented monoid.
\begin{thm}\label{th characterizing loops via TM}
\textup{($cf.$ \cite[Proposition 4.2]{hazli})} Let $\Lambda$ be a row-finite $k$-graph with no sources. Then

$(i)$ $\Lambda$ has a nontrivial loop without an entrance if and only if there exist $0\neq a\in T_\Lambda$ and $\N\in \mathbb{Z}_{<0}^k$ such that $^\N a=a$.

$(ii)$ $\Lambda$ has a loop with an entrance if and only if there is an element $a\in T_\Lambda$ and $\N\in \mathbb{N}^k$ such that $^\N a< a$. 

$(iii)$ $\Lambda$ has a nontrivial loop if and only if there is an element $0\neq a\in T_\Lambda$ and $\N\in \mathbb{N}^k\setminus \{0\}$ such that $^\N a\le a$. 
\end{thm}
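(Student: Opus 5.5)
Part $(i)$ follows directly from Propositions \ref{pro sufficient condition for loop with entrance} and \ref{pro neccesary condition for loop with entrance}. If there are $0\neq a$ and $\N\in\mathbb{Z}^k_{<0}$ with $^\N a=a$, then not every nontrivial loop has an entrance, by Proposition \ref{pro neccesary condition for loop with entrance}. Conversely, if some nontrivial loop $\mu$ has no entrance, the computation in the proof of Proposition \ref{pro sufficient condition for loop with entrance} gives $a=s(\mu)$ and $\N=-d(\mu)$ with $^\N a=a$.

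For $(ii)$, the forward direction mimics Example \ref{ex motivating example}. Let $\mu$ be a loop with an entrance $\lambda$, so $d(\lambda)\le d(\mu)$ and $\mu(0,d(\lambda))\neq\lambda$; put $v=s(\mu)=r(\mu)$. Since $\Lambda$ has no sources, choose $\tau\in s(\lambda)\Lambda^{d(\mu)-d(\lambda)}$. Then $\lambda\tau\neq\mu$, and both lie in $v\Lambda^{d(\mu)}$. The relation $v(0)=\sum_{\alpha\in v\Lambda^{d(\mu)}}s(\alpha)(d(\mu))$ then gives
\[v(0)=v(d(\mu))+s(\tau)(d(\mu))+z\]
for some $z$. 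Hence $^{d(\mu)}v\le v$, and since $s(\tau)(d(\mu))\neq 0$ in the conical monoid $T_\Lambda$, cancellativity yields $^{d(\mu)}v<v$. Here $\N=d(\mu)\in\mathbb{N}^k$. Note that $d(\mu)\neq0$ automatically, because a loop with an entrance is nontrivial.

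For the converse of $(ii)$, suppose $^\N a<a$ with $\N\in\mathbb{N}^k$. Then $\N\neq0$, since otherwise $a<a$ is impossible by cancellativity. I would rerun the argument of Proposition \ref{pro neccesary condition for loop with entrance}. Write $a=\sum_t v_t(\M)$ with $\M\in\mathbb{N}^k$, shift to assume $\M=0$, and pass the relation $a={}^\N a+c$, $c\neq0$, to $M_{\overline{\Lambda}}$. By the confluence lemma, both sides flow to a common $\gamma=\sum_j(u_j,\Q)$, and flowing from a single state gives $\gamma=\sum_t\sum_{\alpha\in(v_t,0)\overline\Lambda^{\Q}}\overline s(\alpha)$. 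Likewise, the side $\sum_t(v_t,\N)+c$ expands $\sum_t(v_t,\N)$ into a sub-multiset, so $\sum_j(u_j,\Q+\N)$ maps into $\sum_j(u_j,\Q)$ with strict excess. More precisely, each $(u_j,\Q+\N)$ flows along degree-$\N$ paths in $\overline\Lambda$ to a sub-multiset, and the total over $j$ is a proper sub-multiset of $B=\{(u_j,\Q)\}$ counted with multiplicity. This gives a map $\sigma$ on the finite index set $\{1,\dots,p\}$, sending each $j$ to the indices of sources of degree-$\N$ paths from $u_j$, and this map must be injective in the appropriate sense. By pigeonhole, iterating $j\mapsto$ (some successor) eventually cycles, which produces a loop $\mu$ of degree $m\N$ at some $u_j$.

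The main obstacle is guaranteeing that some such loop actually has an entrance. My plan is as follows. If every loop obtained this way had no entrance, each $u_j$ on a cycle would contribute exactly one successor, forcing an equality $^{m\N}x=x$ on the relevant part. The excess $c$ must then come from vertices not on cycles or from extra successors. Because the excess term $c$ is nonzero and every vertex of $\{u_j\}$ must be hit at most once (to avoid exceeding $B$), a counting argument should show that some vertex on a cycle has at least two successors. That extra successor gives an entrance to the loop. I expect this counting step to need the most care.

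For $(iii)$, the forward direction is easy. A nontrivial loop either has an entrance, in which case $(ii)$ gives $^{d(\mu)}v<v$, or it has none, in which case $s(\mu)(d(\mu))=s(\mu)$; either way $^{d(\mu)}v\le v$ with $d(\mu)\neq0$. For the converse, $^\N a\le a$ with $a\neq0$ and $\N\neq0$ splits into the equality and strict cases. The strict case is handled by $(ii)$, whose loop is nontrivial since it has an entrance. In the equality case $^{-\N}a=a$ with $-\N\in\mathbb{Z}^k_{<0}$, Proposition \ref{pro neccesary condition for loop with entrance} (via its construction of $\mu_j$) produces a nontrivial loop.
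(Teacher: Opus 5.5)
Your arguments for (i) and for the forward direction of (ii) are correct and match the paper, and (iii) reduces correctly to (i) and (ii). The converse of (ii) has the same architecture as the paper's: the skew product, confluence, a common state $\Q$, and a loop extracted by pigeonhole. However, the decisive step is missing, and the setup that feeds it points the wrong way. Flows in $M_{\overline{\Lambda}}$ raise the state, so $(u_j,\Q+\N)$ cannot flow into $B=\{(u_j,\Q)\}$. What you actually have is the following. First, $\sum_t(v_t,\N)\longrightarrow\gamma_1=\sum_{j\in S}(u_j,\Q)$ with $S\subsetneq\{1,\dots,p\}$; the inclusion is proper because $c\longrightarrow\gamma_2\neq0$. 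Second, $\sum_t(v_t,\N)\longrightarrow\sum_{j=1}^p(u_j,\Q+\N)$, by shifting. The homogeneous-flow claim in the proof of Proposition~\ref{pro neccesary condition for loop with entrance}, together with unique factorization, then gives $\sum_{j\in S}\sum_{\alpha\in u_j\Lambda^{\N}}(s(\alpha),\Q+\N)=\sum_{j=1}^{p}(u_j,\Q+\N)$ in $\mathbb{F}_{\overline{\Lambda}}$. So a proper sub-multiset expands onto the whole one, not the other way round. Because of this, ``iterate some successor'' need not cycle: a successor of $j\in S$ can lie outside $S$, where nothing is assigned to it. Your contradiction plan also does not close by counting alone. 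Excess can be produced at branching vertices off the cycles, and what forces branching on a cycle is the way every index eventually feeds into a cycle.

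The missing idea is to iterate \emph{predecessors}. A label-preserving bijection between the two sides of the last identity gives a map $\pi:\{1,\dots,p\}\to S$ and paths $\lambda_j\in u_{\pi(j)}\Lambda^{\N}$ with $s(\lambda_j)=u_j$. Moreover, if $j\neq j'$ and $\pi(j)=\pi(j')$, then $\lambda_j\neq\lambda_{j'}$. Since $|S|<p$, $\pi$ is a non-injective self-map of a finite set, so some $x$ is not $\pi$-periodic. Let $y=\pi^n(x)$ be the first periodic point on its orbit. Then $y$ has two preimages: the non-periodic point $z=\pi^{n-1}(x)$, and its periodic predecessor $y'$ on the cycle. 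Let $m$ be the period of $y$. Composing the $\lambda$'s around the cycle gives a loop $\mu\in u_y\Lambda^{m\N}u_y$ with $\mu(0,\N)=\lambda_{y'}$. Then $\lambda_z\in u_y\Lambda^{\N}$ satisfies $\lambda_z\neq\lambda_{y'}$ and $\N\le m\N$, so it is an entrance to $\mu$. This is exactly the step the paper delegates to \cite[Proposition 4.2(ii)]{hazli}. The paper locates an $\mathbf{e}_i$-branching at an intermediate vertex $\mu((\nu-1)\N+\mathbf{r})$ of the loop, whereas this argument places the branching directly at $u_y$ in degree $\N$.
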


\begin{proof}
$(i)$ Follows immediately from Propositions \ref{pro sufficient condition for loop with entrance} and \ref{pro neccesary condition for loop with entrance}.

$(ii)$ Suppose $\mu$ is a loop in $\Lambda$ with an entrance $\lambda$. Since $\Lambda$ has no sources and $d(\mu)\ge d(\lambda)$, we can choose $\tau\in s(\lambda)\Lambda^{d(\mu)-d(\lambda)}$. Then $\mu\neq \lambda\tau$ otherwise $\mu(0,d(\lambda))=(\lambda\tau)(0,d(\lambda))=\lambda$ contradicting the fact that $\lambda$ is an entrance of $\mu$. This implies that\[s(\mu)=\displaystyle{\sum_{\alpha\in s(\mu)\Lambda^{d(\mu)}}} s(\alpha)(d(\mu))\ge s(\mu)(d(\mu))+s(\lambda\tau)(d(\mu))>s(\mu)(d(\mu))\] and hence, $^{d(\mu)}s(\mu) < s(\mu)$.

Conversely, assume that $^\N a< a$ for some $a\in T_\Lambda$ and $\N\in \mathbb{N}^k$. Obviously, $a\neq 0$ and $\N\neq 0$. Using the fact that $\Lambda$ has no sources, we can write $a$ as $a=\displaystyle{\sum_{t=1}^{l}}~ v_t(\M)$ for some $m\in \mathbb{N}^k$. Now since $a> ~^\N a$, there exists $b\in T_\Lambda\setminus \{0\}$ such that $a=~^\N a+b$. We now bypass this equality to the $k$-graph monoid $M_{\overline{\Lambda}}$ and subsequently, obtain \[\displaystyle{\sum_{t=1}^{l}} (v_t,\M)=\displaystyle{\sum_{t=1}^{l}} (v_t,\M+\N)+\phi(b),\] where $\phi:T_\Lambda\longrightarrow M_{\overline{\Lambda}}$ is the isomorphism of \cite[Theorem 3.16]{HMPS1}. Now, applying \cite[Lemma 3.5]{HMPS1} followed by \cite[Lemma 3.1]{HMPS1}, we can find $\gamma,\alpha,\beta\in \mathbb{F}_{\overline{\Lambda}}\setminus \{0\}$ such that $\gamma=\alpha+\beta$, $\displaystyle{\sum_{t=1}^{l}} (v_t,\M)\longrightarrow \gamma$, $\displaystyle{\sum_{t=1}^{l}} (v_t,\M+\N)\longrightarrow \alpha$ and $\phi(b)\longrightarrow \beta$. Again, since $\overline{\Lambda}$ has no sources, we can choose $\gamma$ to be such that all the vertices in its support lie in a common state $\mathbf{q}$, where $\mathbf{q}> \M+\N$. Then $\alpha$ being a summand of $\gamma$ in $\mathbb{F}_{\overline{\Lambda}}$, must be of the form $\alpha=\displaystyle{\sum_{j=1}^{p}}(u_j,\mathbf{q})$ for some vertices $u_j\in \Lambda^0$. Thus, we have \[\displaystyle{\sum_{t=1}^{l}} (v_t,\M+\N)\longrightarrow \displaystyle{\sum_{j=1}^{p}} (u_j,\mathbf{q}).\] Since the skew-product $k$-graph $\overline{\Lambda}$ repeats the same pattern in each state, this implies \[\displaystyle{\sum_{t=1}^{l}}(v_t,\M)\longrightarrow \displaystyle{\sum_{j=1}^{p}} (u_j,\mathbf{q}-\N).\] Also, \[\displaystyle{\sum_{t=1}^{l}} (v_t,\M)\longrightarrow \gamma=\displaystyle{\sum_{j=1}^{p}} (u_j,\mathbf{q})+\beta.\] Now, a similar argument as used in the proof of Proposition \ref{pro neccesary condition for loop with entrance}, yields \[\displaystyle{\sum_{j=1}^{p}} (u_j,\mathbf{q}-\N)=\displaystyle{\sum_{t=1}^{l}}\left(\displaystyle{\sum_{\lambda\in (v_t,\M)\overline{\Lambda}^{\mathbf{q}-\N-\M}}}\overline{s}(\lambda)\right)\] and \[\gamma=\displaystyle{\sum_{t=1}^{l}}\left(\displaystyle{\sum_{\mu\in (v_t,\M)\overline{\Lambda}^{\mathbf{q}-\M}}}\overline{s}(\mu)\right).\] Since $\mathbf{q}-\N-\M<\mathbf{q}-\M$, this implies 
\begin{equation}\label{eq splitflow}
\displaystyle{\sum_{j=1}^{p}} (u_j,\mathbf{q}-\N)\longrightarrow \displaystyle{\sum_{j=1}^{p}} (u_j,\mathbf{q})+\beta.    
\end{equation}

Since $\beta\neq 0$, the number of summand in the right hand side is greater than the number of summand in the left hand side of the transition (\ref{eq splitflow}). Then there should exist some $i\in \{1,2,\ldots,k\}$ with $\mathbf{e}_i\le \N$ such that some vertex appearing in an intermediate step of (\ref{eq splitflow}) receives at least two different paths of degree $\mathbf{e}_i$. Now, following the same line of argument used in the proof of \cite[Proposition 4.2 $(ii)$]{hazli}, we can obtain a nonempty subset $A=\{j_1,j_2,\ldots,j_q\}$ of $\{1,2,\ldots,p\}$ and a permutation $\pi\in S_q$ such that for all $i=1,2,\ldots,q$, there is exactly one path $(\lambda_{j_i},\mathbf{q}-\N)\in \overline{\Lambda}^{\N}$ with $\overline{r}(\lambda_{j_i},\mathbf{q}-\N)=(u_{j_{\pi(i)}},\mathbf{q}-\N)$ and $\overline{s}(\lambda_{j_i},\mathbf{q}-\N)=(u_{j_i},\mathbf{q})$. Moreover, there exist $\ell\in \{1,2,\ldots,q\}$ and $0\le \mathbf{r}<\N$ such that $|(\lambda_{j_\ell},\mathbf{q}-\N)(\mathbf{r})\overline{\Lambda}^{\mathbf{e}_i}|\ge 2$ for some $i\in \{1,2,\ldots,k\}$ with $\mathbf{e}_i\le \N-\mathbf{r}$. Let $o(\pi)=\nu$. Then we have a path \[(\lambda_{j_{\pi^{\nu-1}(\ell)}},\mathbf{q}-\nu \N)(\lambda_{j_{\pi^{\nu-2}(\ell)}},\mathbf{q}-(\nu-1) \N)\cdots (\lambda_{j_{\pi(\ell)}},\mathbf{q}-2 \N)(\lambda_{j_\ell},\mathbf{q}-\N)\] in $\overline{\Lambda}$ of degree $\nu \N$ with range $(u_{j_{\pi^\nu(\ell)}},\mathbf{q}-\nu\N)=(u_{j_\ell},\mathbf{q}-\nu \N)$ and source $(u_{j_\ell},\mathbf{q})$. This gives a loop $\mu\in \Lambda^{\nu \N}$ with $s(\mu)=r(\mu)=u_{j_\ell}$, namely \[\mu=\lambda_{j_{\pi^{\nu-1}(\ell)}}\lambda_{j_{\pi^{\nu-2}(\ell)}}\cdots \lambda_{j_{\pi(\ell)}}\lambda_{j_\ell}.\] We now show that $\mu$ has an entrance. Since $|(\lambda_{j_\ell},\mathbf{q}-\N)(\mathbf{r})\overline{\Lambda}^{\mathbf{e}_i}|\ge 2$, $|\mu((\nu-1)\N+\mathbf{r})\Lambda^{\mathbf{e}_i}|\ge 2$ and so we can choose \[\lambda\in \mu((\nu-1)\N+\mathbf{r})\Lambda^{\mathbf{e}_i}\setminus \{\mu((\nu-1)\N+\mathbf{r},(\nu-1)\N+\mathbf{r}+\mathbf{e}_i)\}.\] Let $\alpha=\mu(0,(\nu-1)\N+\mathbf{r})\lambda$. Since $\mathbf{e}_i\le \N-\mathbf{r}$, $d(\alpha)=(\nu-1)\N+\mathbf{r}+\mathbf{e}_i\le \nu\N=d(\mu)$. Also, $\mu(0,(\nu-1)\N+\mathbf{r}+\mathbf{e}_i)\neq \alpha$. Therefore, $\alpha$ is an entrance to the loop $\mu$. This finishes the proof.

$(iii)$ Follows directly from $(i)$ and $(ii)$. 
\end{proof}
To state our next result, we formally define the higher-rank analogues of Condition (L) and Condition (NE) of directed graphs. 
\begin{dfn}\label{def condition L and NE in higher-dimension}
Suppose $\Lambda$ is a higher-rank graph. We say that $\Lambda$ satisfies 

$(i)$ \emph{Condition} (HL), if every nontrivial loop in $\Lambda$ has an entrance.

$(ii)$ \emph{Condition} (HNE), if no loop in $\Lambda$ has an entrance. 
\end{dfn}
In view of the results we have so far, we can now obtain the following corollary which characterizes the above defined conditions on a $k$-graph via its associated talented monoid.
\begin{cor}\label{cor characterizing HL and HNE via TM}
Let $\Lambda$ be a row-finite $k$-graph without sources. Then

$(i)$ $\Lambda$ has Condition \textup{(HL)} if and only if $^\N a\neq a$ for all $0\neq a\in T_\Lambda$ and $\N\in \mathbb{Z}_{<0}^k$,

$(ii)$ $\Lambda$ has Condition \textup{(HNE)} if and only if there are no $a\in T_\Lambda$ and $\mathbf{q}\in \mathbb{N}^k$ such that $^\mathbf{q} a< a$.

Furthermore, if $|\Lambda^0|<\infty$, then $\Lambda$ has Condition \textup{(HNE)}  if and only if for any $0\neq a\in T_\Lambda$ and $i\in \{1,2,...,k\}$, there exists $n_i\in \mathbb{N}\setminus \{0\}$ such that $^{n_i \mathbf{e}_i} a=a$.
\end{cor}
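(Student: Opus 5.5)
Parts $(i)$ and $(ii)$ are reformulations of Theorem \ref{th characterizing loops via TM}. For $(i)$, Condition (HL) says that no nontrivial loop lacks an entrance. By Theorem \ref{th characterizing loops via TM}$(i)$, or equivalently Propositions \ref{pro sufficient condition for loop with entrance} and \ref{pro neccesary condition for loop with entrance}, this holds exactly when no $0\neq a$ and $\N\in\mathbb{Z}^k_{<0}$ satisfy $^\N a=a$. For $(ii)$, Condition (HNE) says that no loop has an entrance. By Theorem \ref{th characterizing loops via TM}$(ii)$, this is equivalent to the non-existence of $a\in T_\Lambda$ and $\mathbf q\in\mathbb N^k$ with $^{\mathbf q}a<a$. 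Both are immediate negations.

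For the final statement, assume $|\Lambda^0|<\infty$.

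\emph{Sufficiency.} Suppose every nonzero $a$ and every $i$ admit $n_i\geq 1$ with $^{n_i\mathbf e_i}a=a$. Assume $^{\mathbf q}a<a$ for some $a$ and $\mathbf q\in\mathbb N^k$, and write $a={}^{\mathbf q}a+b$ with $b\ne0$. Let $N$ be the product of the periods of $a$ in each direction, so $^{N\mathbf e_i}a=a$ for all $i$. Iterating $a={}^{\mathbf q}a+b$ gives
\[
a={}^{N\mathbf q}a+b+{}^{\mathbf q}b+\cdots+{}^{(N-1)\mathbf q}b .
\]
Since $N\mathbf q$ is a sum of multiples of the $N\mathbf e_i$, we have $^{N\mathbf q}a=a$. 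Cancellativity of $T_\Lambda$ then forces $b+\cdots=0$, and conicality gives $b=0$, a contradiction. So $(ii)$ yields (HNE).

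\emph{Necessity.} Assume (HNE). It suffices to treat generators $a=v(\M)$: if each generator $v(\M)$ has a period $n$ in direction $\mathbf e_i$, then a finite sum has the lcm of its summands' periods as a period. Applying $^{-\M}$, it is enough to show $^{n\mathbf e_i}v=v$ for each vertex $v$. Fix $i$ and consider the $1$-skeleton in colour $i$.

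\begin{itemize}
\item Every vertex receives at least one edge of degree $\mathbf e_i$ (no sources). Since $\Lambda^0$ is finite, following such edges backwards from $v$ eventually repeats a vertex. This produces a unicolour cycle $\mu$ of degree $m\mathbf e_i$ reachable from $v$, i.e.\ a path of colour $i$ from $v$ into $\mu$.
\item Under (HNE), $s(\mu)\Lambda^{m\mathbf e_i}=\{\mu\}$, and more generally every vertex on $\mu$ receives exactly one path of degree $\mathbf e_i$. Otherwise a second $\mathbf e_i$-edge into a vertex $\mu(j\mathbf e_i)$ would give an entrance $\mu(0,j\mathbf e_i)\lambda$, after rotating the loop to start there if needed.
\item Now $v=\sum_{\alpha\in v\Lambda^{t\mathbf e_i}}s(\alpha)(t\mathbf e_i)$. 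For large $t$ every $s(\alpha)$ should lie on such an entrance-free unicolour cycle. Each term $w$ lying on a cycle of length $m_w$ satisfies $^{m_w\mathbf e_i}w(\cdot)=w(\cdot)$, so taking the lcm of the cycle lengths gives the period of $v$.
\end{itemize}

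The main obstacle is this last step: proving that for large $t$ all sources of paths in $v\Lambda^{t\mathbf e_i}$ lie on entrance-free colour-$i$ cycles. Sources that keep moving along non-cyclic parts would require infinitely many vertices. Concretely, pigeonhole along a backward path of length $>|\Lambda^0|$ forces it into a cycle once it has revisited a vertex. But a path could pass through several cycles, and I must show it cannot leave a cycle when traced backward. This is exactly the no-entrance property: once a backward path is on the cycle, the unique incoming $\mathbf e_i$-edge keeps it there. With the backward direction fixed, any path of length $\geq|\Lambda^0|$ ends on a cycle and stays there. Hence for $t\ge|\Lambda^0|$ every source is a cycle vertex, and shifting by the lcm $L$ of cycle lengths permutes nothing, giving $^{L\mathbf e_i}v=v$.
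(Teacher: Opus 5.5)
Your proposal is correct and follows essentially the same route as the paper. Parts $(i)$ and $(ii)$ are read off from Propositions~\ref{pro sufficient condition for loop with entrance}, \ref{pro neccesary condition for loop with entrance} and Theorem~\ref{th characterizing loops via TM}$(ii)$. Sufficiency uses the same iterate-and-cancel contradiction with a common period of $a$. Necessity flows each vertex in colour $i$ until every summand sits on an entrance-free unicolour loop, then takes an lcm of periods. Your uniform choice $t\ge|\Lambda^0|$, together with the observation that under (HNE) a backward colour-$i$ path cannot leave a loop once it has entered it, just makes explicit the paper's claim that ``such a stage will definitely arise''.
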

\begin{proof}
$(i)$ follows from Propositions \ref{pro sufficient condition for loop with entrance} and \ref{pro neccesary condition for loop with entrance} whereas $(ii)$ follows from Theorem \ref{th characterizing loops via TM} $(ii)$. We now move towards the final statement. First let us show that under the said condition of the talented monoid, $\Lambda$ has Condition (HNE). Choose any $0\neq a\in T_\Lambda$ and $\mathbf{q}=(q_1,q_2,\ldots,q_k)\in \mathbb{N}^k$. By part $(ii)$ of this Corollary, it suffices to show that $^\mathbf{q} a\nless a$. If possible suppose $^\mathbf{q}a<a$. Now for any $i\in \{1,2,\ldots,k\}$, there exists a positive integer $n_i$ such that $^{n_i\mathbf{e}_i}a=a$. Let $l=l.c.m(n_1,n_2,\ldots,n_k)$. Then $l>0$ and hence $^{l\mathbf{q}}a<a$. But for all $i=1,2,\ldots,k$, $^{lq_i\mathbf{e}_i}a=~^{(\frac{l q_i}{n_i})n_i\mathbf{e}_i}a=a$, and hence \[^{l\mathbf{q}}a=~^{lq_1\mathbf{e}_1+lq_2\mathbf{e}_2+\cdots+lq_k\mathbf{e}_k}a=a,\] a contradiction. Therefore, $^\mathbf{q}a\nless a$. This shows that $\Lambda$ has Condition (HNE). Now, we prove the other way round. So assume that $\Lambda$ has Condition (HNE). Suppose $0\neq a\in T_\Lambda$ and $i\in \{1,2,\ldots,k\}$. We can write $a=\displaystyle{\sum_{t=1}^{p}} v_t(\M)$ for some vertices $v_t$ of $\Lambda$ and some $\M\in \mathbb{Z}^k$. Let us choose any $t\in \{1,2,\ldots,p\}$. Using the fact that $\Lambda$ has no sources, we can transform $(v_t,0)\in \overline{\Lambda}^0$ in $\mathbb{F}_{\overline{\Lambda}}$ using the relation $\longrightarrow_i$ and continue doing this for all its ancestors until we reach a stage where each ancestor is the source of some loop entirely made of edges of degree $\mathbf{e}_i$. Since $\Lambda^0$ is finite, such a stage will definitely arise. Moreover, since $\Lambda$ has Condition (HNE), the loops, obtained eventually, have no entrances. Transiting everything in $T_\Lambda$ and following the proof of Proposition \ref{pro sufficient condition for loop with entrance}, we can write $v_t=\displaystyle{\sum_{j=1}^{m_t}}w_j^t(\ell_j\mathbf{e}_i)$, where $\ell_j\in \mathbb{N}$ and $w_j^t\in \Lambda^0$ (not necessarily distinct) are such that for each $j$, there exists a positive integer $q_{j,t}$ with $^{q_{j,t}\mathbf{e}_i}w_j^t=w_j^t$. Let $q_t:=l.c.m(q_{1,t},q_{2,t},\ldots,q_{{m_t},t})$. Then \[^{q_t\mathbf{e}_i}v_t=\displaystyle{\sum_{j=1}^{m_t}}~^{q_t\mathbf{e}_i}w_j^t(\ell_j\mathbf{e}_i)=\displaystyle{\sum_{j=1}^{m_t}}w_j^t(\ell_j\mathbf{e}_i)=v_t.\] Finally, setting $n_i:=l.c.m(q_1,q_2,\ldots,q_p)$, we have \[^{n_i\mathbf{e}_i}a=\displaystyle{\sum_{t=1}^{p}}~^{n_i\mathbf{e}_i}v_t(\M)=\displaystyle{\sum_{t=1}^{p}}v_t(\M)=a.\]  
\end{proof}

As a simple application of Corollary \ref{cor characterizing HL and HNE via TM}, we now obtain the following interesting result regarding the preservation of Conditions (HL) and (HNE) under pullback. 
\begin{cor}\label{cor Condition HL and HNE under pullback}
Let $\Lambda$ be a row-finite $\ell$-graph without sources and $f:\mathbb{N}^k\longrightarrow \mathbb{N}^\ell$ a surjective monoid homomorphism. Then the following hold.

$(i)$ $\Lambda$ has Condition \textup{(HNE)} if and only if $f^*(\Lambda)$ has Condition \textup{(HNE)}. 

$(ii)$ If $f^*(\Lambda)$ has Condition \textup{(HL)}, then $\Lambda$ has Condition \textup{(HL)}. The converse holds if $f$ is $0$-restricted, i.e., $f^{-1}(\{0\})=\{0\}$. 
\end{cor}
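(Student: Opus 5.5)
By Proposition \ref{pro TM under pullbacks}, surjectivity of $f$ makes $\rho:T_{f^*(\Lambda)}\to T_\Lambda$, $v(\N)\mapsto v(f(\N))$, a monoid isomorphism with $\rho(^{\Q}a)={}^{f(\Q)}\rho(a)$. Being a monoid isomorphism, $\rho$ preserves and reflects the algebraic pre-order, strict inequalities, and nonzero elements. The plan is to move the talented monoid criteria of Corollary \ref{cor characterizing HL and HNE via TM} across $\rho$. Two facts about $f$ are needed. First, $f$ maps $\mathbb{N}^k$ into $\mathbb{N}^\ell$. Second, because $f$ is surjective onto $\mathbb{N}^\ell$, every $\mathbf{p}\in\mathbb{N}^\ell$ equals $f(\Q)$ for some $\Q\in\mathbb{N}^k$. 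Indeed, the images $f(\mathbf{e}_j)$ generate $\mathbb{N}^\ell$ as a monoid, so every $\mathbf{e}_i$ of $\mathbb{N}^\ell$ is some $f(\mathbf{e}_j)$.

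For $(i)$ we use Corollary \ref{cor characterizing HL and HNE via TM}$(ii)$: (HNE) means there are no $a$ and $\mathbf{q}\ge 0$ with $^{\mathbf{q}}a<a$. Suppose $f^*(\Lambda)$ fails (HNE), so $^{\Q}a<a$ in $T_{f^*(\Lambda)}$ with $\Q\in\mathbb{N}^k$. Applying $\rho$ gives $^{f(\Q)}\rho(a)<\rho(a)$ with $f(\Q)\in\mathbb{N}^\ell$, so $\Lambda$ fails (HNE). Conversely, suppose $^{\mathbf{p}}b<b$ in $T_\Lambda$ with $\mathbf{p}\in\mathbb{N}^\ell$. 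Choose $\Q\in\mathbb{N}^k$ with $f(\Q)=\mathbf{p}$ and set $a=\rho^{-1}(b)$. Then $\rho(^{\Q}a)={}^{\mathbf{p}}b<b=\rho(a)$, and so $^{\Q}a<a$.

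For $(ii)$ we use Corollary \ref{cor characterizing HL and HNE via TM}$(i)$: (HL) means there is no $0\neq a$ and $\N\in\mathbb{Z}^k_{<0}$ with $^{\N}a=a$. Suppose $\Lambda$ fails (HL), so $^{\mathbf{p}}b=b$ with $b\neq0$ and $\mathbf{p}\in\mathbb{Z}^\ell_{<0}$. Write $-\mathbf{p}=f(\Q)$ with $\Q\in\mathbb{N}^k$; then $\Q\neq0$ because $\mathbf{p}\neq0$. The element $a=\rho^{-1}(b)\neq 0$ satisfies $\rho(^{-\Q}a)={}^{\mathbf{p}}b=b$, hence $^{-\Q}a=a$ with $-\Q\in\mathbb{Z}^k_{<0}$. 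So $f^*(\Lambda)$ fails (HL), which proves the contrapositive of the first assertion.

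For the converse, assume $f$ is $0$-restricted and that $f^*(\Lambda)$ fails (HL), so $^{\N}a=a$ with $a\neq0$ and $\N\in\mathbb{Z}^k_{<0}$. Then $-\N\in\mathbb{N}^k\setminus\{0\}$, so $f(-\N)\in\mathbb{N}^\ell$, and $0$-restrictedness gives $f(-\N)\neq0$. Thus $f(\N)\in\mathbb{Z}^\ell_{<0}$ and $^{f(\N)}\rho(a)=\rho(a)\neq0$, so $\Lambda$ fails (HL).

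The main obstacle is only this sign and nonvanishing bookkeeping. Without $0$-restrictedness, $f(\N)$ could be $0$ while $\N\neq0$; for example, if $f$ kills a coordinate, every element is fixed by the corresponding direction of $\mathbb{Z}^k$. That is exactly why the converse in $(ii)$ needs the extra hypothesis. Everything else is a formal consequence of Proposition \ref{pro TM under pullbacks} and Corollary \ref{cor characterizing HL and HNE via TM}, together with the lifting of nonnegative vectors through the surjection $f$.
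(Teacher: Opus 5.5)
Your proposal is correct and follows the same route as the paper. Both transport the talented-monoid criteria of Corollary \ref{cor characterizing HL and HNE via TM} across the equivariant isomorphism $\rho$ of Proposition \ref{pro TM under pullbacks}, using surjectivity of $f$ to lift degrees and $0$-restrictedness to keep $f(\N)$ nonzero. Your step of lifting $-\mathbf{p}\in\mathbb{N}^\ell\setminus\{0\}$ to $\Q\in\mathbb{N}^k\setminus\{0\}$ and passing to $-\Q\in\mathbb{Z}^k_{<0}$ is the right way to do what the paper writes, with a slip, as ``choose $\M\in\mathbb{N}^k$ with $f(\M)=\N$''; you also spell out the direction of $(i)$ that the paper leaves as ``similar.''
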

\begin{proof}
$(i)$ Suppose $\Lambda$ has Condition (HNE). If there exist $a\in T_{f^*(\Lambda)}$ and $\Q\in \mathbb{N}^k$ such that $^\Q a< a$, then applying the isomorphism $\rho:T_{f^*(\Lambda)}\longrightarrow T_\Lambda$ of Proposition \ref{pro TM under pullbacks}, we have $\rho(^\Q a)<\rho(a)$. This implies $^{f(\Q)}\rho(a)<\rho(a)$ in $T_\Lambda$. But this is not possible in view of Corollary \ref{cor characterizing HL and HNE via TM}$(ii)$. Hence, there are no $a\in T_{f^*(\lambda)}$ and $\Q\in \mathbb{N}^k$ such that $^\Q a<a$. Corollary \ref{cor characterizing HL and HNE via TM}$(ii)$ now shows that $f^*(\Lambda)$ has (HNE). The other way round is similar.

$(ii)$ Suppose $f^*(\Lambda)$ has Condition (HL). Let $0\neq a\in T_\Lambda$ and $\N\in \mathbb{Z}_{<0}^\ell$. Choose any $\M\in \mathbb{N}^k$ such that $f(\M)=\N$ and let $b=\rho^{-1}(a)$. Then $b\neq 0$ and $\M\in \mathbb{Z}_{<0}^k$ Now if $^\N a=a$, then $\rho(^\M b)=\rho(b)$ and consequently, $^\M b=b$ in $T_{f^*(\Lambda)}$. This is a contradiction in view of Corollary \ref{cor characterizing HL and HNE via TM}$(i)$. Thus, $^\N a\neq a$ for all $0\neq a\in T_\Lambda$ and $\N\in \mathbb{Z}_{<0}^\ell$. Then $\Lambda$ has Condition (HL) by Corollary \ref{cor characterizing HL and HNE via TM}$(i)$. Now, assume that $f$ is $0$-restricted and $\Lambda$ has Condition (HL). If there exist $0\neq a\in T_{f^*(\Lambda)}$ and $\N\in \mathbb{Z}_{<0}^k$ with $^\N a=a$, then applying the isomorphism $\rho$, we have $^{f(\N)}\rho(a)=\rho(a)$, where $\rho(a)\neq 0$ since $\rho$ is injective and $f(\N)\neq 0$ since $f$ is $0$-restricted. But then we have a contradiction due to Corollary \ref{cor characterizing HL and HNE via TM}$(i)$. Therefore, no such $a$ and $\N$ exist, which implies that $f^*(\Lambda)$ has Condition (HL).     
\end{proof}
\begin{rmk}\label{rem proof holds for any f}
The ``only if'' direction in Corollary \ref{cor Condition HL and HNE under pullback}$(i)$ can also be proved even if $f$ is not surjective. For $a\in T_{f^*(\Lambda)}$ and $\Q\in \mathbb{N}^k$, $^\Q a<a$ implies $a=~^\Q a+b$ for some $0\neq b\in T_{f^*(\Lambda)}$. Now it is easy to see that the homomorphism $\rho:T_{f^*(\Lambda)}\longrightarrow T_\Lambda$ (see the proof of Proposition \ref{pro TM under pullbacks}) maps nonzero elements of $T_{f^*(\Lambda)}$ to nonzero elements of $T_\Lambda$. Hence, $\rho(b)\neq 0$ in $T_\Lambda$ and consequently, $^{f(\Q)}\rho(a)<\rho(a)$ giving the same contradiction. Similarly, the converse part of Corollary \ref{cor Condition HL and HNE under pullback}$(ii)$ can also be proved for any $0$-restricted homomorphism $f$ (not necessarily surjective).     
\end{rmk}
In Corollary \ref{cor Condition HL and HNE under pullback}$(ii)$, the assumption that $f$ is $0$-restricted cannot be dropped. To understand why it is required, consider the $2$-graph $\Lambda$ of Example \ref{ex motivating example}. Note that $\Lambda\cong f^*(R_2^*)$, where $R_2$ is the \emph{rose with two petals} and $f:\mathbb{N}^2\longrightarrow \mathbb{N}$ is the homomorphism defined by $f((x,y)):=x$. Clearly, $R_2^*$ has Condition (HL) but $\Lambda$ does not since the loop $h$ has no entrance. This is mainly because $f((0,1))=0$.

\subsection{Reachability from loops with entrances, generalized cycles, and pure infiniteness via the talented monoid}\label{ssec purely infinite simple}
In this subsection, our main aim is to obtain a sufficient talented monoid criterion (see Theorem \ref{th the TM criterion for purely infinite simplicity} below) ensuring pure infinite simplicity of a Kumjian--Pask algebra. It is apparent from the literature that reachability fof a vertex from loops with entrances plays significant role to obtain purely infinite simple higher-rank graph algebras. Hence, we first characterize this via the talented monoid.

\begin{lem}\label{lem connects to a loop with entrance}
Let $\Lambda$ be a row-finite $k$-graph without sources. Let $v\in \Lambda^0$. Then $v$ is reachable from a loop with an entrance if and only if there exists $a\in T_\Lambda$ and $\N\in \mathbb{N}^k$ such that $v(0) \ge a>~ ^\N a$. 
\end{lem}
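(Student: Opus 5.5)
We prove the two implications separately. In both directions we reduce to Theorem \ref{th characterizing loops via TM}$(ii)$ and its proof. The new ingredient is keeping track of the vertex $v$.

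($\Rightarrow$) Suppose $\mu$ is a loop with an entrance and $\lambda\in v\Lambda$ is a path with $s(\lambda)=r(\mu)=s(\mu)$. Set $w:=s(\mu)$. Relation (\ref{TM equation}) in degree $d(\lambda)$ gives
\[v(0)=\sum_{\alpha\in v\Lambda^{d(\lambda)}} s(\alpha)(d(\lambda))\ \ge\ w(d(\lambda)).\]
Take $a:=w(d(\lambda))={}^{d(\lambda)}w$. By the first half of the proof of Theorem \ref{th characterizing loops via TM}$(ii)$ we have ${}^{d(\mu)}w<w$. Applying the automorphism ${}^{d(\lambda)}$, which preserves the strict order, yields ${}^{d(\mu)}a<a$. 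Hence $v(0)\ge a>{}^{\N}a$ with $\N=d(\mu)$.

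($\Leftarrow$) Suppose $v(0)\ge a>{}^\N a$, so $\N\ne 0$ and $a\ne 0$. Write $v(0)=a+c$. Since $\Lambda$ has no sources, we may push everything to a common state and write $a=\sum_t v_t(\M)$ with $\M\in\mathbb{N}^k$.

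Next we pass to $M_{\overline{\Lambda}}$ and apply the confluence lemma (\cite[Lemma 3.5]{HMPS1}) together with \cite[Lemma 3.1]{HMPS1} to the equality $(v,0)=\phi(a)+\phi(c)$. Arguing as in the proof of Proposition \ref{pro neccesary condition for loop with entrance}, we obtain $(v,0)\longrightarrow \gamma$ with $\gamma$ concentrated in one state $\mathbf p\ge\M$ and equal to $\sum_{\alpha\in (v,0)\overline{\Lambda}^{\mathbf p}}\overline{s}(\alpha)$. Moreover $\gamma$ contains, as a summand, the full flow of $\sum_t(v_t,\M)$ to state $\mathbf p$. Consequently every vertex $v_t$, and every vertex obtained by flowing from the $v_t$, is the source of a path of $\Lambda$ with range $v$. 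In other words, $v$ is reachable from everything that occurs in the expansion of $a$.

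We then run the argument of the converse part of Theorem \ref{th characterizing loops via TM}$(ii)$ on the relation $a={}^\N a+b$ with $b\ne0$. That argument produces a loop $\mu$ with an entrance whose base vertex $u_{j_\ell}$ occurs in a flow $\sum_t(v_t,\M)\longrightarrow\sum_j(u_j,\mathbf q)$. Hence $u_{j_\ell}=s(\beta)$ for some path $\beta$ with $r(\beta)=v_t$ for some $t$. Combining this with the previous paragraph, there is a path from $s(\mu)$ to $v$, so $v$ is reachable from a loop with an entrance.

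The main obstacle is confirming that the loop constructed in Theorem \ref{th characterizing loops via TM}$(ii)$ is genuinely based at a vertex lying in the flow of $a$, rather than at a vertex coming from $\beta$ or from the auxiliary terms. Inspecting that proof, the $u_j$ are exactly the summands of the flow of ${}^\N a$, which is a translate of the flow of $a$; by the repeating-pattern property (iii) of $\overline{\Lambda}$ these correspond to sources of paths starting from the $v_t$. This resolves the concern.
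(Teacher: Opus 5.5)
Your proof of ($\Rightarrow$) is the same as the paper's. In ($\Leftarrow$), however, the sentence ``Consequently every vertex $v_t$, and every vertex obtained by flowing from the $v_t$, is the source of a path of $\Lambda$ with range $v$'' is false, and your final concatenation depends on it. From $v(0)\ge a$ you only learn that the full flow of $a$ becomes a sub-multiset of the full flow of $(v,0)$ \emph{from the confluence state $\mathbf{p}$ onwards}. Vertices occurring in the flow of $a$ at earlier states, in particular the $v_t$ themselves, need not admit any path to $v$.

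For example, take the $1$-graph with vertices $v,x,y,z$, edges $e_1,e_2,e_3$ with $s(e_1)=s(e_3)=x$, $s(e_2)=y$, $r(e_1)=r(e_2)=v$, $r(e_3)=z$, two loops at $x$ and one loop at $y$. It is row-finite without sources. We have $v(0)=x(1)+y(1)=z(0)+y(1)$ and $z(0)=x(1)=2x(2)=2z(1)$, so $a=z(0)$ and $\N=1$ satisfy the hypothesis. The loop produced by the argument of Theorem~\ref{th characterizing loops via TM}$(ii)$ is based at $x$, and there is a path from $x$ to $z=v_1$. But $v\Lambda z=\emptyset$, since no edge has source $z$. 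The lemma's conclusion still holds here via $e_1$, but not by your route: the path from $v_t$ to $v$ you want to append does not exist in general.

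Your last paragraph does not repair this. The flow of ${}^{\N}a$ at state $\mathbf{q}$ translates to the flow of $a$ at state $\mathbf{q}-\N$, which may lie before $\mathbf{p}$. So passing to the translate moves you in the wrong direction.

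The missing idea is to align states, or equivalently, to route the loop vertex through the part of $v$'s flow that comes from $a$. In the argument of Theorem~\ref{th characterizing loops via TM}$(ii)$, the base vertex $(u_{j_\ell},\mathbf{q})$ lies in the confluence element $\gamma$. This $\gamma$ is reached from $a$ and is concentrated in the single state $\mathbf{q}$, hence is the full flow of $a$ to state $\mathbf{q}$. Since a confluence element can always be flowed further, you may choose $\mathbf{q}\ge\mathbf{p}$ before running that argument. Then, by unique factorization, the full flow of $a$ to state $\mathbf{q}$ is contained in the full flow of $(v,0)$ to state $\mathbf{q}$. This gives a path in $v\Lambda^{\mathbf{q}}u_{j_\ell}$ directly, with no detour through $v_t$.

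The paper achieves the same thing slightly differently. It first writes a flow of $(v,0)$ as $\alpha_1+\alpha_2$ with $a\longrightarrow\alpha_1$. It then chooses the confluence element $\gamma$ for $a={}^{\N}a+y$ so that $\alpha_1\longrightarrow\gamma$. The loop vertex in $\gamma$ therefore descends from a vertex $w$ of $\alpha_1$, which in turn descends from $(v,0)$.
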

\begin{proof}
We first prove the sufficiency. Since $v\ge a$, there exists $x\in T_\Lambda$ such that $v=a+x$. Passing this equality to $M_{\overline{\Lambda}}$ and using the Confluence lemma (\cite[Lemma 3.5]{HMPS1}), we have $\alpha\in \mathbb{F}_{\overline{\Lambda}}\setminus \{0\}$ such that $(v,0)\longrightarrow \alpha$ and $a+x\longrightarrow \alpha$. For simplicity, we are using the same notation to denote elements of $T_\Lambda$ and $M_{\overline{\Lambda}}$. By \cite[Lemma 3.1]{HMPS1}, there exists $\alpha_1\in \mathbb{F}_{\overline{\Lambda}}\setminus \{0\}$ and $\alpha_2\in \mathbb{F}_{\overline{\Lambda}}$ such that $a\longrightarrow \alpha_1$ and $x\longrightarrow \alpha_2$. Note that $\alpha_2=0$ if $v=a$. Again $a=~^\N a+y$ for some $y\in T_\Lambda\setminus \{0\}$. Applying the same process, we have a $\beta\in \mathbb{F}_{\overline{\Lambda}}\setminus \{0\}$ such that $a\longrightarrow \beta$ and $^\N a+y\longrightarrow \beta$. Now, we have $a\longrightarrow \alpha_1$ and $a\longrightarrow \beta$, which implies $\alpha_1=\beta$ in $M_{\overline{\Lambda}}$. By applying the confluence lemma again, we have some $\gamma\in \mathbb{F}_{\overline{\Lambda}}\setminus \{0\}$ such that $\alpha_1\longrightarrow \gamma$ and $\beta\longrightarrow \gamma$. Then $a\longrightarrow \alpha_1\longrightarrow \gamma$ and $^\N a+y\longrightarrow \beta\longrightarrow \gamma$. Now, if we apply the exact same line of argument used in the proof of Theorem \ref{th characterizing loops via TM} $(ii)$, then we will obtain a vertex $u\in \Lambda^0$ which appears (with some state) in the support of $\gamma$ such that $u=s(\mu)=r(\mu)$ for a loop $\mu\in \Lambda$ with an entrance. Since $\alpha_1\longrightarrow \gamma$, there is a path $\delta\in \Lambda u$ which ends at some vertex $w$ in the support of $\pi(\alpha_1)$ (where $\pi:T_\Lambda(\cong M_{\overline{\Lambda}})\longrightarrow M_\Lambda$ is the forgetful homomorphism of \cite[Theorem 3.16 $(i)$]{HMPS1}). Again, $(v,0)\longrightarrow \alpha=\alpha_1+\alpha_2$ implies that there is a path $\tau\in v\Lambda w$. Finally, these two paths concatenate to give the path $\tau\delta\in v\Lambda u$ and hence, $v$ is reachable from the loop $\mu$ which has an entrance. For the reader's convenience, the transitions used in the proof are presented below.
\[
\begin{tikzpicture}[scale=1.5]                                                        
\node[] (A) at (0,0) {$v$};
\node[] (B) at (2,0) {$\alpha$};
\node[] (C) at (1,-1) {$a$};
\node[] (D) at (2.6,-0.3) {$\alpha_1$};
\node[] (E) at (2.6,-1.5) {$\beta$};
\node[] (F) at (4.2,-1) {$\gamma$};
\node[] (G) at (0.6,-2) {$^\N a+y$};
\node[] (H) at (1,1) {$x$}; 
\node[] (J) at (2.6,0.3) {$\alpha_2$};

\path[->, >=latex,thick] (A) edge [] node[above=0.05cm]{} (B);
\path[->, >=latex,thick] (C) edge [] node[above=0.05cm]{} (D);
\path[->, >=latex,thick] (C) edge [] node[above=0.05cm]{} (E);
\path[->, >=latex,thick] (D) edge [] node[above=0.05cm]{} (F);
\path[->, >=latex,thick] (E) edge [] node[above=0.05cm]{} (F);
\path[->, >=latex,thick] (G) edge [] node[above=0.05cm]{} (E);
\path[->, >=latex,thick] (H) edge [] node[above=0.05cm]{} (J);

\node at (2.6,0) {$+$};
\node at (2.28,0) {$=$};

\end{tikzpicture}
\]

For the necessity, suppose $v$ is reachable from a loop $\mu$ which has an entrance $\alpha$. Then there is a path $\lambda\in v\Lambda s(\mu)$. In $T_\Lambda$, we have \[v=v(0)=\displaystyle{\sum_{\gamma\in v\Lambda^{d(\lambda)}}} s(\gamma)(d(\lambda))\ge s(\lambda)(d(\lambda))=s(\mu)(d(\lambda)).\] Again, the argument used in the first part of the proof of Theorem \ref{th characterizing loops via TM} $(ii)$, gives $^{d(\mu)}s(\mu)< s(\mu)$. Therefore, we have \[v\ge s(\mu)(d(\lambda))> ~^{d(\mu)}s(\mu)(d(\lambda)).\] Letting $a=s(\mu)(d(\lambda))$ and $\N=d(\mu)$, we are done.
\end{proof}

\begin{cor}\label{cor reachability from a loop with entrance}
Let $\Lambda$ be a row-finite $k$-graph with no sources. Then every vertex of $\Lambda$ is reachable from a loop with an entrance if and only if for any $0\neq b\in T_\Lambda$, there exist $a\in T_\Lambda$ and $\N\in \mathbb{N}^k$ such that $b\ge a>~^\N a$.     
\end{cor}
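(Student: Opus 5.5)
The plan is to reduce the corollary to Lemma~\ref{lem connects to a loop with entrance}, applied vertex by vertex, using the $\mathbb{Z}^k$-action to move between $v(0)$ and $v(\M)$ and the order to pass between general elements and vertices.

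For the forward direction, assume every vertex is reachable from a loop with an entrance, and let $0\neq b\in T_\Lambda$.
\begin{itemize}
\item Since $\Lambda$ has no sources, write $b=\sum_{t=1}^{l} v_t(\M)$ with $l\ge 1$ and some $\M\in\mathbb{Z}^k$. Then $b\ge v_1(\M)={}^{\M}v_1(0)$.
\item By Lemma~\ref{lem connects to a loop with entrance} applied to $v_1$, there are $a'\in T_\Lambda$ and $\N\in\mathbb{N}^k$ with $v_1(0)\ge a'>{}^{\N}a'$.
\item Acting by $\M$ preserves the algebraic pre-order and strict inequalities: if $x=y+z$ with $z\neq 0$, then ${}^{\M}x={}^{\M}y+{}^{\M}z$ and ${}^{\M}z\neq 0$.
\item Setting $a={}^{\M}a'$ therefore gives $b\ge {}^{\M}v_1(0)\ge a>{}^{\M+\N}a'={}^{\N}a$, as required.
\end{itemize}

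For the converse, let $v\in\Lambda^0$ and take $b=v(0)$. We need $b\neq 0$ in $T_\Lambda$. This holds because $T_\Lambda\cong M_{\overline{\Lambda}}$ and $(v,0)$ is a nonzero generator of a conical monoid. Alternatively, it follows from the isomorphism with $\mathcal{V}^{\gr}(\KP_\FF(\Lambda))$, since $p_v\neq 0$ by \cite{Pino}. The hypothesis then gives $a\in T_\Lambda$ and $\N\in\mathbb{N}^k$ with $v(0)\ge a>{}^{\N}a$. Lemma~\ref{lem connects to a loop with entrance} now says that $v$ is reachable from a loop with an entrance.

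The only point needing care is that $v(0)\neq 0$ in $T_\Lambda$. I would settle it by citing conicality of $M_{\overline{\Lambda}}$ together with the fact that no relation of $\longrightarrow$ sends a nonzero element to $0$ when there are no sources. Everything else is routine bookkeeping with the $\mathbb{Z}^k$-action.
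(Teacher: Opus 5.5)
Your proposal is correct and matches the paper's own argument. Both directions reduce to Lemma~\ref{lem connects to a loop with entrance}: in the forward direction you bound $b$ below by $v(\M)={}^{\M}v(0)$ and transport $v(0)\ge a>{}^{\N}a$ along the automorphism ${}^{\M}(\cdot)$, and in the converse you take $b=v(0)$. Your explicit check that $v(0)\neq 0$ in $T_\Lambda$ is a small point the paper uses without comment.
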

\begin{proof}
Suppose $v$ is any vertex of $\Lambda$. If the condition holds, then there is an $a\in T_\Lambda$ and $\N\in \mathbb{N}^k$ such that $v(0)\ge a>~^\N a$. By Lemma \ref{lem connects to a loop with entrance}, $v$ is then reachable from a loop with an entrance. Now assume that every vertex is reachable from a loop with an entrance and take any $0\neq b\in T_\Lambda$. Then $b\ge v(\M)$ for some $v\in \Lambda^0$ and $\M\in \mathbb{N}^k$. Since $v$ is reachable from a loop with an entrance, applying Lemma \ref{lem connects to a loop with entrance}, we have $v(0)\ge a>~^\N a$ for some $a\in T_\Lambda$ and $\N\in \mathbb{N}^k$. Then $b\ge v(\M)\ge~^\M a>~^\M(^\N a)=~^\N(^\M a)$ and we are done. 
\end{proof}

Now we have the necessary tools to obtain the following theorem giving a sufficient condition for purely infinite simple (discrete and analytic) algebras of row-finite $k$-graphs without sources.

\begin{thm}\label{th the TM criterion for purely infinite simplicity}
Let $\Lambda$ be a row-finite $k$-graph without sources. Suppose the following hold:

$(i)$ $T_\Lambda$ is $\mathbb{Z}^k$-order ideal simple,

$(ii)$ $\mathbb{Z}^k$ acts freely on $T_\Lambda$ and 

$(iii)$ for any $0\neq b\in T_\Lambda$, there exist $a\in T_\Lambda$ and $\N\in \mathbb{N}^k$ such that $b\ge a >~^\N a$. 

Then both the $C^*$-algebra $C^*(\Lambda)$ and the Kumjian--Pask algebra $\KP_\mathsf{F}(\Lambda)$ are simple and purely infinite.     
\end{thm}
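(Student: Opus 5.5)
The plan is to translate each talented monoid hypothesis into its geometric counterpart and then invoke Theorem~\ref{th pis of C*-algebra} for $C^*(\Lambda)$, together with its discrete analogue for $\KP_\mathsf{F}(\Lambda)$.

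\emph{Step 1.} By Corollary~\ref{cor reachability from a loop with entrance}, hypothesis $(iii)$ is exactly the statement that every vertex of $\Lambda$ is reachable from a loop with an entrance.

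\emph{Step 2.} Translate $(i)$ and $(ii)$ into cofinality and aperiodicity. For this I would use the earlier results of \cite{HMPS1,HMPS2}:
\begin{itemize}
\item $\mathbb{Z}^k$-order ideals of $T_\Lambda$ correspond bijectively to hereditary saturated subsets of $\Lambda^0$, so $(i)$ says $\Lambda^0$ has no nontrivial hereditary saturated subsets.
\item $\mathbb{Z}^k$-order ideal simplicity together with freeness of the action characterizes $\Lambda$ being cofinal and aperiodic.
\end{itemize}
If only one direction is quoted there, I would argue each implication directly.
\begin{itemize}
\item \emph{Cofinality.} If $\Lambda$ were not cofinal, the set of vertices that cannot reach some fixed infinite path is hereditary and saturated. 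Hence it generates a proper nonzero $\mathbb{Z}^k$-order ideal $\{a : a \le \sum n_i\,{}^{\gamma_i}w \text{ with } w \text{ in the set}\}$, contradicting $(i)$.
\item \emph{Aperiodicity.} Under cofinality, a failure of aperiodicity gives a vertex $v$ and $\M\ne\N$ such that every path from $v$ satisfies $\lambda(\M,\M+\bP)=\lambda(\N,\N+\bP)$. Flowing $v(0)$ forward far enough in $T_\Lambda$ (via $M_{\overline\Lambda}$ and the confluence lemma) then produces an element $a = \sum_{\lambda\in v\Lambda^{\bP}} s(\lambda)(\bP)$ with ${}^{\M-\N}a = a$, contradicting freeness.
\end{itemize}

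\emph{Step 3.} With $\Lambda$ aperiodic, cofinal, and every vertex reachable from a loop with an entrance, Theorem~\ref{th pis of C*-algebra} gives that $C^*(\Lambda)$ is purely infinite simple. For $\KP_\mathsf{F}(\Lambda)$:
\begin{itemize}
\item Simplicity follows from the simplicity theorem of \cite{Pino}, which requires aperiodicity and cofinality.
\item For pure infiniteness, I would mirror the argument of \cite{KPS} algebraically. Given a nonzero one-sided ideal, the reduction theorem (Cuntz--Krieger uniqueness type argument in \cite{Pino}, valid under aperiodicity) produces a nonzero idempotent $s_\lambda s_{\lambda^*}$, hence some $p_v$, inside it up to equivalence.
\item By reachability, $p_v \ge s_\tau s_{\tau^*} \sim p_{s(\mu)}$ for a loop $\mu$ with entrance $\alpha$. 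Then $p_{s(\mu)} \sim s_\mu s_{\mu^*} < s_\mu s_{\mu^*} + s_{\alpha\tau'}s_{(\alpha\tau')^*} \le p_{s(\mu)}$, so $p_{s(\mu)}$ is infinite, and therefore so is the corresponding idempotent in the ideal.
\end{itemize}

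The main obstacle is Step~2, specifically extracting aperiodicity from freeness of the action. It requires carefully converting a periodicity of the path space into an equality ${}^{\N} a = a$ in $T_\Lambda$, using the confluence and flow arguments of Proposition~\ref{pro neccesary condition for loop with entrance}. The pure infiniteness of $\KP_\mathsf{F}(\Lambda)$ in Step~3 is secondary and would be handled by citing or adapting the corresponding known result for Kumjian--Pask algebras.
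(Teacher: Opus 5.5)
Your proposal is correct. Steps 1--2 and the $C^*$-part of Step 3 are exactly the paper's proof: $(i)$ gives cofinality by \cite[Proposition 4.8]{HMPS1}, $(ii)$ gives aperiodicity by \cite[Proposition 5.2]{HMPS1}, $(iii)$ gives reachability from loops with entrances by Corollary~\ref{cor reachability from a loop with entrance}, and Theorem~\ref{th pis of C*-algebra} handles $C^*(\Lambda)$.

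The divergence is in the Kumjian--Pask part. The paper does not argue directly there. It observes that an entrance $\lambda$ to a loop $\mu$ is also an entrance to the generalized cycle $(\mu,s(\mu))$, because $d(\lambda)\le d(\mu)$ and $\mu(0,d(\lambda))\neq\lambda$ force $\MCE(\mu,\lambda)=\emptyset$. Hence every vertex is reachable from a generalized cycle with an entrance, and the paper then quotes \cite[Theorem 5.4]{LPIS}.

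You instead take simplicity from \cite{Pino} and prove pure infiniteness by hand. For a nonzero $x$ in a one-sided ideal, the reduction step behind the Cuntz--Krieger uniqueness theorem gives $s_{\sigma^*}xs_{\sigma'}=p_w$. This yields an idempotent of the ideal equivalent to $p_w$. Then $p_w\ge s_\tau s_{\tau^*}\sim p_{s(\mu)}\sim s_\mu s_{\mu^*}<p_{s(\mu)}$ by (KP3)--(KP4). This works, but you should record the two standard facts it uses:
\begin{itemize}
\item infiniteness of an idempotent is invariant under $\sim$;
\item an idempotent $f$ dominating an infinite idempotent $e\sim e'<e$ is itself infinite, since $f\sim e'+(f-e)<f$.
\end{itemize}
Your route is self-contained modulo the reduction lemma of \cite{Pino}. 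It is the same idempotent bookkeeping the paper later uses in Proposition~\ref{pro a necessary condition for KPA to be ultramatricial}. The paper's route is a one-line citation and rests on the more general generalized-cycle criterion.

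One claim in Step 2 should be dropped. Order-ideal simplicity plus freeness does \emph{not} characterize cofinal aperiodic $k$-graphs; only the implication from $(ii)$ to aperiodicity holds, and that is all you use. For example, $\Lambda=R_2^*\times R_2^*$ is cofinal and aperiodic. Yet in $T_\Lambda$ one has $v(\mathbf{e}_1)=2v(\mathbf{e}_1+\mathbf{e}_2)=v(\mathbf{e}_2)$, so ${}^{\mathbf{e}_1-\mathbf{e}_2}v=v$ and the action is not free.

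Your fallback sketch for aperiodicity is also not yet an argument. The element $\sum_{\lambda\in v\Lambda^{\bP}}s(\lambda)(\bP)$ is just $v(0)$ by the defining relations, so you are asserting ${}^{\M-\N}v=v$ without proof. Rely on \cite[Proposition 5.2]{HMPS1} there, as the paper does.
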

\begin{proof}
The condition that $T_\Lambda$ is free of any nontrivial $\mathbb{Z}^k$-order ideal implies that $\Lambda$ is cofinal (see \cite[Proposition 4.8]{HMPS1}), whereas the second condition implies that $\Lambda$ is aperiodic by \cite[Proposition 5.2]{HMPS1}. The third condition, in view of Corollary \ref{cor reachability from a loop with entrance}, says that every vertex is reachable from a loop with an entrance. The $C^*$-algebra case follows by applying Theorem \ref{th pis of C*-algebra}. Since an entrance to a loop $\mu$ is also an entrance to the corresponding generalized cycle $(\mu,s(\mu))$, every vertex in $\Lambda$ is in fact reachable from a generalized cycle with an entrance. The case for the Kumjian--Pask algebra now follows from \cite[Theorem 5.4]{LPIS}. 
\end{proof}

We illustrate the application of the above theorem via the following example.
\begin{example}\label{ex KP is purely infinite simple via TM}
Let $\Lambda$ be the $2$-graph of \cite[Example 6.4]{HMPS1}. The $1$-skeleton is shown below.
\[
\begin{tikzpicture}[scale=0.35]
\node[circle,draw,fill=black,inner sep=0.5pt] (p11) at (1, 1) {$.$} 
edge[-latex, blue, thick,loop, out=40, in=-40, min distance=80, looseness=1.5] (p11)
edge[-latex, blue,thick,loop, out=50, in=-50, min distance=123, looseness=2.2] (p11)
edge[-latex, blue,thick, loop, out=60, in=-60, min distance=190, looseness=2.3] (p11)
edge[-latex, red,thick, loop, dashed, out=125, in=235, min distance=115, looseness=1.8] (p11)
edge[-latex, red, thick, loop, dashed, out=115, in=245, min distance=190, looseness=1.7] (p11);
                                                               
\node at (0.9, -0.5) {$v$};
\node at (-4, 1) {$g_1,g_2$};
\node at (7,1) {$f_1,f_2,f_3$};
\end{tikzpicture}
\]
The factorization rules for the bicolored paths are given as follows: $f_ig_j=g_if_j$ for $i,j=1,2$ and $f_3 g_i=g_if_3$ for $i=1,2$. Let $J$ be any nonzero $\mathbb{Z}^2$ order ideal of $T_\Lambda$ and choose $0\neq a\in J$. Then $a\ge v(\N)$ for some $\N\in \mathbb{Z}^2$, which implies $v=~^{-\N}v(\N)\in J$. Consequently, $v(\M)\in J$ for all $\M\in \mathbb{Z}^2$, whence $J=T_\Lambda$. Thus, $T_\Lambda$ is $\mathbb{Z}^2$-order ideal simple. Again $\mathbb{Z}^2$ acts freely on $T_\Lambda$ (see \cite[Example 6.4]{HMPS1} for the justification). If $0\neq b\in T_\Lambda$, then since $b\ge v(\M)$ for some $\M\in \mathbb{Z}^2$ and $v=3v(\mathbf{e}_1)> v(\mathbf{e}_1)$, we have $b\ge v(\M)> ~^{\mathbf{e}_1}v(\M)$. Hence, all the three conditions of Theorem \ref{th the TM criterion for purely infinite simplicity} are satisfied. It follows that $\KP_\mathsf{F}(\Lambda)$ is simple and purely infinite.  
\end{example}

Our next theorem exhibits yet another talent of the talented monoid as it provides a necessary condition, via the talented monoid, for the existence of a generalized cycle. Moreover, the monoid can completely determine when a generalized cycle has an entrance. To prove this, we need the following lemma.
\begin{lem}\label{lem generalized cycle}
\textup{($cf.$ \cite[Lemma 3.7]{Evans})} Suppose $\Lambda$ is a row-finite $k$-graph without sources. Let $(\mu,\nu)$ be a generalized cycle in $\Lambda$. Let $v$ be the common source of $\mu$ and $\nu$. Then $^{d(\mu)}v\le~^{d(\nu)}v$ in $T_\Lambda$. Furthermore, the generalized cycle $(\mu,\nu)$ has an entrance if and only if $~^{d(\mu)}v<~^{d(\nu)}v$.    
\end{lem}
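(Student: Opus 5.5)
The plan is to compare the two elements $^{d(\mu)}v=v(d(\mu))$ and $^{d(\nu)}v=v(d(\nu))$ by expanding both through the defining relations of $T_\Lambda$ to one common large degree $\N$. At that degree both become sums indexed by sets of paths of degree $\N$ starting at the common range $w:=r(\mu)=r(\nu)$. Write $\M:=d(\mu)$ and $\Q:=d(\nu)$, and fix any $\N\in\mathbb{N}^k$ with $\N\ge \M\vee\Q$. Since $\Lambda$ has no sources, the relations of $T_\Lambda$ give
\[
v(\M)=\sum_{\tau\in v\Lambda^{\N-\M}} s(\tau)(\N),\qquad v(\Q)=\sum_{\sigma\in v\Lambda^{\N-\Q}} s(\sigma)(\N).
\]
Set $A:=\{\mu\tau\mid \tau\in v\Lambda^{\N-\M}\}$ and $B:=\{\nu\sigma\mid \sigma\in v\Lambda^{\N-\Q}\}$, both subsets of $w\Lambda^{\N}$. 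By unique factorization, $\tau\mapsto\mu\tau$ and $\sigma\mapsto\nu\sigma$ are injective, and $s(\mu\tau)=s(\tau)$. Hence $v(\M)=\sum_{\lambda\in A}s(\lambda)(\N)$ and $v(\Q)=\sum_{\lambda\in B}s(\lambda)(\N)$.

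The key step is to show $A\subseteq B$. Take $\tau\in v\Lambda^{\N-\M}$. The generalized cycle hypothesis gives some $\rho\in\MCE(\mu\tau,\nu)$. Then $d(\rho)=\N\vee\Q=\N$, so $\rho=\mu\tau$ and $(\mu\tau)(0,\Q)=\nu$. By unique factorization, $\mu\tau=\nu\sigma$ with $\sigma=(\mu\tau)(\Q,\N)\in v\Lambda^{\N-\Q}$, so $\mu\tau\in B$. Therefore
\[
v(\Q)=v(\M)+\sum_{\lambda\in B\setminus A}s(\lambda)(\N),
\]
which gives $^{d(\mu)}v\le\,^{d(\nu)}v$.

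For the entrance statement, first suppose $\tau\in v\Lambda$ is an entrance, i.e.\ $\MCE(\mu,\nu\tau)=\emptyset$. Choose $\N\ge \M\vee(\Q+d(\tau))$ and extend $\tau$ to $\tau\tau'$ with $d(\nu\tau\tau')=\N$, which is possible since there are no sources. Then $\nu\tau\tau'\in B$. If $\nu\tau\tau'$ lay in $A$, then $(\nu\tau\tau')(0,\M\vee(\Q+d(\tau)))$ would be a minimal common extension of $\mu$ and $\nu\tau$, contradicting $\MCE(\mu,\nu\tau)=\emptyset$. So $B\setminus A\neq\emptyset$. The extra summand is then a nonzero element of $T_\Lambda$, since vertices at any state are nonzero, and the inequality is strict.

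Conversely, suppose $v(\M)<v(\Q)$ and use any $\N\ge\M\vee\Q$. If $B=A$, the displayed identity would give $v(\Q)=v(\M)$. So $B\setminus A$ contains some $\nu\sigma$ with $(\nu\sigma)(0,\M)\neq\mu$. Since $d(\nu\sigma)=\N\ge\M$, any minimal common extension of $\mu$ and $\nu\sigma$ would have degree $\N$ and hence equal $\nu\sigma$, which would force $(\nu\sigma)(0,\M)=\mu$. Thus $\MCE(\mu,\nu\sigma)=\emptyset$, and $\sigma$ is an entrance to $(\mu,\nu)$.

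I expect the main obstacle to be this converse direction. One must make sure that strictness of $<$ in $T_\Lambda$, which only guarantees that some nonzero difference exists, really forces $B\neq A$. The argument handles this because $A\subseteq B$ is already established at the level of index sets, so equality of sets yields equality of elements. A secondary point to check carefully is that $\MCE$ computations collapse to prefix conditions once the degree $\N$ dominates both $d(\mu)$ and the degree of the other path.
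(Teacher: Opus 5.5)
Your argument is correct. The first half is essentially the paper's argument. The paper expands ${}^{d(\mu)}v$ and ${}^{d(\nu)}v$ only to the minimal degree $d(\mu)\vee d(\nu)$. It compares $v\Lambda^{(d(\mu)\vee d(\nu))-d(\mu)}$ with $v\Lambda^{(d(\mu)\vee d(\nu))-d(\nu)}$ through the injection $\varphi\colon\alpha\mapsto\beta_\alpha$ defined by $\mu\alpha=\nu\beta_\alpha$, and the image of $\varphi$ corresponds to your set $A$ under $\beta\mapsto\nu\beta$.

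There is one difference in that first half. The paper identifies the domain of $\varphi$ with $\Ext(\mu,\{\nu\})$ by invoking exhaustiveness \cite[Lemma 3.2]{Evans}. You read $A\subseteq B$ directly off the definition of a generalized cycle: once $d(\mu\tau)\ge d(\nu)$, the minimal common extension can only be $\mu\tau$ itself. This makes that citation unnecessary. Your forward implication (an entrance gives strict inequality) is also the paper's argument, except that you choose the common degree large enough to contain $\nu\tau$.

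The converse is where the routes genuinely differ. The paper argues by contrapositive: if $(\mu,\nu)$ has no entrance, then $(\nu,\mu)$ is again a generalized cycle. The first part then gives ${}^{d(\nu)}v\le{}^{d(\mu)}v$, and antisymmetry of $\le$ on $T_\Lambda$ (which is cancellative and conical) forces equality. You instead produce an entrance explicitly from any $\nu\sigma\in B\setminus A$. The paper's route is shorter because it reuses the inequality. Yours is constructive, and it shows more: when the inequality is strict, there is an entrance of degree $\N-d(\nu)$ for every $\N\ge d(\mu)\vee d(\nu)$.

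One point you should make explicit. You may read $a<b$ as ``$a\le b$ and $a\neq b$'' or as ``$b=a+x$ with $x\neq 0$''. Under either reading, one of your two strictness steps tacitly uses cancellativity of $T_\Lambda$. For example, you need it to pass from $x\neq 0$ to ${}^{d(\mu)}v\neq {}^{d(\mu)}v+x$, or to contradict ${}^{d(\nu)}v={}^{d(\mu)}v+x$ when $B=A$. Cancellativity holds because $T_\Lambda\cong\mathcal{V}^{\gr}(\KP_\mathsf{F}(\Lambda))$, as recorded in the preliminaries, and the paper's proof relies on it in the same way. You should cite it.
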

\begin{proof}
Let $\N=(d(\mu)\vee d(\nu))-d(\mu)$ and $\M=(d(\mu)\vee d(\nu))-d(\nu)$. We first claim that $\Ext(\mu,\{\nu\})=v\Lambda^\N$. The inclusion $\Ext(\mu,\{\nu\})\subseteq v\Lambda^\N$ follows from the definition. For the other way round, choose any $\lambda\in v\Lambda^\N$. Since $(\mu,\nu)$ is a generalized cycle, $\Ext(\mu,\{\nu\})$ is an exhaustive subset of $v\Lambda$ by \cite[Lemma 3.2]{Evans}. Hence, there is some $\alpha\in \Ext(\mu,\{\nu\})$ such that $\MCE(\lambda,\alpha)\neq \emptyset$. But since $d(\lambda)=d(\alpha)=\N$, we should have $\lambda=\alpha$, whence $\lambda\in \Ext(\mu,\{\nu\})$. So the claim is established. Next we define an injective map from $\Ext(\mu,\{\nu\})$ into $v\Lambda^\M$. For this note that if $\alpha\in \Ext(\mu,\{\nu\})$, then there is a unique $\beta_\alpha\in v\Lambda^\M$ such that $\mu\alpha=\nu\beta_\alpha$. Now define 
\begin{align*}
\varphi:\Ext(\mu,\{\nu\})&\longrightarrow v\Lambda^\M\\
\alpha&\longmapsto \beta_\alpha.
\end{align*}
This is clearly injective since \[\alpha_1\neq \alpha_2\Rightarrow \mu\alpha_1\neq \mu\alpha_2\Rightarrow \nu\beta_{\alpha_1}\neq \nu\beta_{\alpha_2}\Rightarrow \beta_{\alpha_1}\neq \beta_{\alpha_2}.\] Also, observe that $s(\alpha)=s(\varphi(\alpha))$ since $\mu\alpha=\nu\beta_\alpha$. Now, using the map $\varphi$ together with this observation and our earlier claim, we have 
\begin{align*}
v(-\N)=\displaystyle{\sum_{\alpha\in v\Lambda^\N}}s(\alpha)=\displaystyle{\sum_{\alpha\in \Ext(\mu,\{\nu\})}} s(\alpha)
=\displaystyle{\sum_{\alpha\in \Ext(\mu,\{\nu\})}} s(\varphi(\alpha))=\displaystyle{\sum_{\beta\in \image(\varphi)}} s(\beta)
\le \displaystyle{\sum_{\beta\in v\Lambda^\M}} s(\beta)=v(-\M)
\end{align*}
in $T_\Lambda$. Hence, \[^{d(\mu)}v=~^{d(\mu)\vee d(\nu)}v(-\N)\le~^{d(\mu)\vee d(\nu)}v(-\M)=~^{d(\nu)}v.\] 

Now, suppose $(\mu,\nu)$ has an entrance $\tau$. Choose any $\eta\in s(\tau)\Lambda^{(\M\vee d(\tau))-d(\tau)}$. If $(\tau\eta)(0,\M)\in \image(\varphi)$, then $\mu\alpha=\nu (\tau\eta)(0,\M)$ for some $\alpha\in \Ext(\mu,\{\nu\})$. But then $\mu\alpha(\tau\eta)(\M,\M\vee d(\tau))=\nu\tau\eta$ which implies $\MCE(\mu,\nu\tau)\neq \emptyset$, a contradiction to the fact that $\tau$ is an entrance to $(\mu,\nu)$. Thus, \[(\tau\eta)(0,\M)\in v\Lambda^\M\setminus \image(\varphi).\] Now, using the previous argument, we have \[v(-\N)=\displaystyle{\sum_{\beta\in \image(\varphi)}}s(\beta)<s((\tau\eta)(0,\M))+\displaystyle{\sum_{\beta\in \image(\varphi)}}s(\beta)\le \displaystyle{\sum_{\beta\in v\Lambda^\M}} s(\beta)=v(-\M),\] and consequently, $^{d(\mu)}v<~^{d(\nu)}v$. If $(\mu,\nu)$ has no entrance, then clearly $(\nu,\mu)$ is a generalized cycle. An exact same argument as used in the proof of the first part now gives $^{d(\nu)}v\le~^{d(\mu)}v$. Since the relation $\le$ on $T_\Lambda$ is antisymmetric, it follows that $^{d(\mu)}v=~^{d(\nu)}v$. Therefore, if $^{d(\mu)}v<~^{d(\nu)}v$, the generalized cycle $(\mu,\nu)$ must have an entrance. 
\end{proof}
We now state and prove the following theorem, the first two parts of which are immediate consequences of the preceding lemma, whereas the final part gives a talented monoid criterion to have a generalized cycle without entrance. 
\begin{thm}\label{th existence of generalized cycle via TM}
Suppose $\Lambda$ is a row-finite $k$-graph without sources. Then the following hold.

$(i)$ If $\Lambda$ contains a generalized cycle, then there exist $0\neq a\in T_\Lambda$ and $\N,\M\in \mathbb{N}^k$ with $\N\neq \M$ such that $^\N a\le~^\M a$. 

$(ii)$ If $\mathbb{Z}^k$ acts freely on $T_\Lambda$, then every generalized cycle in $\Lambda$ has an entrance.

$(iii)$ If there is a periodic atom in $T_\Lambda$, then there is a generalized cycle in $\Lambda$ which has no entrance. Consequently, the $C^*$-algebra $C^*(\Lambda)$ is not AF. 
\end{thm}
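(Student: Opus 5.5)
Parts $(i)$ and $(ii)$ follow directly from Lemma \ref{lem generalized cycle}. For $(i)$, let $(\mu,\nu)$ be a generalized cycle with common source $v$. Take $a=v(0)\neq 0$, $\N=d(\mu)$ and $\M=d(\nu)$; the lemma gives ${}^{\N}a\le{}^{\M}a$. It remains to check $d(\mu)\neq d(\nu)$. If the degrees were equal, then $\MCE(\mu,\nu)\neq\emptyset$ (take $\tau=s(\mu)$ in the definition) would force $\mu=\nu$ by unique factorization, since both would be the degree-$d(\mu)$ initial segment of a common extension of that same degree. This contradicts distinctness.

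For $(ii)$, suppose $(\mu,\nu)$ is a generalized cycle without entrance. The last paragraph of the proof of Lemma \ref{lem generalized cycle} gives ${}^{d(\mu)}v={}^{d(\nu)}v$, hence ${}^{d(\mu)-d(\nu)}v=v$ with $d(\mu)-d(\nu)\neq 0$ by $(i)$. This contradicts freeness of the action.

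For $(iii)$, let $a$ be a periodic atom, so ${}^{\Q}a=a$ for some $\Q\neq 0$, and $a$ is an atom (minimal nonzero). Since $\Lambda$ has no sources, $a\ge v(\M)$ for some vertex $v$ and some $\M$; minimality forces $a=v(\M)$, and translating we may assume $a=v$. Write $\Q=\Q^+-\Q^-$ with $\Q^\pm\in\mathbb{N}^k$ of disjoint supports. Then ${}^{\Q^+}v={}^{\Q^-}v$, that is, $\sum_{\alpha\in v\Lambda^{\Q^+}}s(\alpha)(\Q^+)=\sum_{\beta\in v\Lambda^{\Q^-}}s(\beta)(\Q^-)$. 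Since each side is a translate of the atom $v$, each side is itself an atom, so $|v\Lambda^{\Q^+}|=|v\Lambda^{\Q^-}|=1$. Call the unique paths $\alpha,\beta$; then $s(\alpha)(\Q^+)=s(\beta)(\Q^-)$ is an atom.

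Iterating this, every vertex $u$ with $u(\mathbf{p})$ equal to a translate of $v$ again has exactly one path of each degree in $\{\Q^+,\Q^-\}$, and the same holds for each $\mathbf{e}_i\le\Q^\pm$. Passing to $M_{\overline{\Lambda}}$ with the confluence lemma (\cite[Lemma 3.5]{HMPS1}), the equality $(v,\Q^+)\sim(v,\Q^-)$ of atoms means that flowing $(v,0)$ along the unique paths of degrees $n\Q^+$ and $n\Q^-$ must eventually land at a common single vertex at state $n\Q^+ = n\Q^- + n\Q$ shifted appropriately. Concretely, I would extend to degree $\Q^+\vee\Q^-=\Q^++\Q^-$, which gives unique paths $\lambda\in v\Lambda^{\Q^++\Q^-}$. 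Then $\mu:=\lambda$ and the factorizations $\lambda=\alpha\alpha'=\beta\beta'$ produce a pair of unique paths. The plan is to find $\mu,\nu$ with common range $v$ and common source $w$ and degrees differing by $\Q$, using finiteness of the relevant vertex set and a pigeonhole/permutation argument as in the proof of Proposition \ref{pro neccesary condition for loop with entrance} applied to the orbit of vertices reached by unique paths. Then $(\mu,\nu)$ is a generalized cycle: every $\tau\in w\Lambda$ is the unique path of its degree from $w$, so $\mu\tau$ and $\nu$ have a minimal common extension. It has no entrance for the same uniqueness reason, or alternatively by Lemma \ref{lem generalized cycle}, since ${}^{d(\mu)}w={}^{d(\nu)}w$ follows from periodicity.

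The main obstacle is constructing $\mu\neq\nu$ with the same source. Uniqueness of paths from $v$ tends to make everything collapse, so the distinct degrees must be extracted from the periodicity ${}^{\Q}v=v$ via the skew product. After that, \cite[Theorem 3.8]{Evans} (a generalized cycle without entrance obstructs AF) gives that $C^*(\Lambda)$ is not AF.
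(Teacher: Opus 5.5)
Parts $(i)$ and $(ii)$ are correct and follow the paper's argument. Your unique-factorization check that $d(\mu)\neq d(\nu)$ fills in a step the paper only asserts.

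Part $(iii)$ has a genuine gap. The pair $\mu\neq\nu$ with common range and source is what carries the proof, as you say yourself, but it is never constructed, and none of the routes you sketch produces it.

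\emph{The displayed identity.} The equation $\sum_{\alpha\in v\Lambda^{\Q^{+}}}s(\alpha)(\Q^{+})=\sum_{\beta\in v\Lambda^{\Q^{-}}}s(\beta)(\Q^{-})$ is a tautology: both sides equal $v(0)$ by the defining relations. It does not restate ${}^{\Q^{+}}v={}^{\Q^{-}}v$, which is $v(\Q^{+})=v(\Q^{-})$, so periodicity has not yet been used at that point.

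\emph{Extending to degree $\Q^{+}+\Q^{-}$.} This gives one path with two factorizations, not two distinct paths.

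\emph{The pigeonhole/permutation argument.} The argument from Proposition~\ref{pro neccesary condition for loop with entrance} is not available here. The theorem does not assume $|\Lambda^0|<\infty$. That argument also needs the period to lie in $\mathbb{Z}^k_{<0}$, and it outputs loops. Your $\Q$ may have mixed signs, and then there need be no loops at all. For example, on the vertex set $\mathbb{Z}$ put one edge of degree $\mathbf{e}_1$ and one of degree $\mathbf{e}_2$ from $n+1$ to $n$. Each $n(0)$ is an atom of $T_\Lambda$ fixed by the action of $\mathbf{e}_1-\mathbf{e}_2$, there is no loop, and the generalized cycle is the pair of edges from $n+1$ to $n$.

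The missing step is a single application of the confluence lemma, as in the paper.

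First, atomicity gives $|v\Lambda^{\mathbf{q}}|=1$ for \emph{every} $\mathbf{q}\in\mathbb{N}^k$, not only for $\Q^{\pm}$. This holds because $v=\sum_{\lambda\in v\Lambda^{\mathbf{q}}}s(\lambda)(\mathbf{q})$ and $T_\Lambda$ is conical. Consequently every vertex $x$ with $v\Lambda x\neq\emptyset$ has $|x\Lambda^{\mathbf{e}_i}|=1$ for all $i$.

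Now transfer $v(\Q^{+})=v(\Q^{-})$ to $M_{\overline{\Lambda}}$ and use \cite[Lemma 3.5]{HMPS1}. This gives $\gamma$ with $(v,\Q^{+})\longrightarrow\gamma$ and $(v,\Q^{-})\longrightarrow\gamma$. Each $\longrightarrow_i$ move applied to a single vertex lying below $v$ again produces a single vertex. Hence $\gamma=(w,\mathbf{p})$ is one vertex, and there are paths $(\mu,\Q^{+})$ and $(\nu,\Q^{-})$ in $\overline{\Lambda}$ with source $(w,\mathbf{p})$. Thus $\mu,\nu\in v\Lambda w$ with $\Q^{+}+d(\mu)=\mathbf{p}=\Q^{-}+d(\nu)$. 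So $d(\nu)-d(\mu)=\Q\neq 0$, and therefore $\mu\neq\nu$.

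From there, your uniqueness argument is correct: it shows $\MCE(\mu\tau,\nu)\neq\emptyset$ for all $\tau$ and that there is no entrance. The paper proves the former by contradiction with atomicity and the latter via Lemma~\ref{lem generalized cycle}. Finally, the reference the paper uses for the non-AF conclusion is \cite[Theorem 3.4]{Evans}, not Theorem 3.8.
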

\begin{proof}
$(i)$ Suppose $(\mu,\nu)$ is a generalized cycle in $\Lambda$. Then $d(\mu)\neq d(\nu)$. Letting $a=s(\mu)$, $\N=d(\mu)$ and $\M=d(\nu)$, Lemma \ref{lem generalized cycle} yields $^\N a\le~^\M a$. 

$(ii)$ Let $(\mu,\nu)$ be any generalized cycle in $\Lambda$. Then by the first part of Lemma \ref{lem generalized cycle}, we have $^{d(\mu)}s(\mu)\le ~^{d(\nu)}s(\mu)$. Since $d(\mu)\neq d(\nu)$, the equality cannot hold by the condition otherwise $s(\mu)(0)$ would be a periodic element in $T_\Lambda$. Therefore, $^{d(\mu)}s(\mu)<~^{d(\nu)}s(\mu)$. The second part of Lemma \ref{lem generalized cycle} now guarantees an entrance to the generalized cycle $(\mu,\nu)$.

$(iii)$ Suppose $a$ is a periodic atom in $T_\Lambda$. Then $a$ must be of the form $a=v(\mathbf{t})$ for some $v\in \Lambda^0$ and $\mathbf{t}\in \mathbb{Z}^k$ such that $|v\Lambda^\mathbf{q}|=1$ for all $\mathbf{q}\in \mathbb{N}^k$. Now since $a$ is periodic, there exists $\bP\in \mathbb{Z}^k\setminus \{0\}$ such that $^\bP a=a$. This amounts to say that there exist distinct $\N,\M\in \mathbb{N}^k$ with $^\N a=~^\M a$. Then $v(\N)=~^{-\mathbf{t}}(^\N a)=~^{-\mathbf{t}}(^\M a)=v(\M)$. Transferring this equality to $M_{\overline{\Lambda}}$ and applying the confluence lemma, we get some $\gamma\in \mathbb{F}_{\overline{\Lambda}}\setminus \{0\}$ such that $(v,\N)\longrightarrow \gamma$ and $(v,\M)\longrightarrow \gamma$. Since $|v\Lambda^\mathbf{q}|=1$ for all $\mathbf{q}\in \mathbb{N}^k$, this forces that $\gamma=\overline{s}(\lambda)=\overline{s}(\mu)$ for some $\alpha\in (v,\N)\overline{\Lambda}$ and $\beta\in (v,\M)\overline{\Lambda}$. Then $\alpha=(\mu,\N)$ and $\beta=(\nu,\M)$ for some $\mu,\nu\in \Lambda$ with $r(\mu)=r(\nu)=v$, $s(\mu)=s(\nu)$ and $\N+d(\mu)=\M+d(\nu)$. Since $\N\neq \M$, $d(\mu)\neq d(\nu)$ which implies $\mu\neq \nu$. Now, we claim that $(\mu,\nu)$ is our desired generalized cycle. It is enough to show that $(\mu,\nu)$ is a generalized cycle. That it has no entrance will follow from Lemma \ref{lem generalized cycle} by virtue of the fact that \[^{d(\mu)}s(\mu)=~^{-\N}s(\mu)(\N+d(\mu))=v(0)=~^{-\M}s(\mu)(\M+d(\nu))=~^{d(\nu)}s(\mu).\] Pick any $\tau\in s(\mu)\Lambda$. We need to show that $\MCE(\mu\tau,\nu)\neq \emptyset$. Seeking a contradiction, assume that $\MCE(\mu\tau,\nu)=\emptyset$. Let $\mathbf{q}=(d(\mu)+d(\tau))\vee d(\nu)$ and choose any $\eta\in s(\tau)\Lambda^{\mathbf{q}-(d(\mu)+d(\tau))}$ and $\delta\in s(\mu)\Lambda^{\mathbf{q}-d(\nu)}$. Then in $T_\Lambda$, \[a=v(\mathbf{t})=\displaystyle{\sum_{\lambda\in v\Lambda^{\mathbf{q}}}} s(\lambda)(\mathbf{t}+\mathbf{q})\ge s(\eta)(\mathbf{t}+\mathbf{q})+s(\delta)(\mathbf{t}+\mathbf{q})\] since $\mu\tau\eta\neq \nu\delta$. This clearly contradicts the fact that $a$ is an atom. Thus, $\MCE(\mu\tau,\nu)\neq \emptyset$ for all $\tau\in s(\mu)\Lambda$, and so $(\mu,\nu)$ is generalized cycle. The final part now follows from \cite[Theorem 3.4]{Evans}.     
\end{proof}

Evans and Sims in \cite[Corollary 3.8]{Evans} proved that $C^*(\Lambda)$ is not AF provided there exists a generalized cycle in $\Lambda$ with an entrance. We now obtain a purely algebraic analogue of their result incorporating the talented monoid. 

\begin{prop}\label{pro a necessary condition for KPA to be ultramatricial}
Let $\Lambda$ be a row-finite $k$-graph without sources. Suppose $(\mu,\nu)$ is a generalized cycle in $\Lambda$. If $\mathbb{Z}^k$ acts freely on $T_\Lambda$, then the Kumjian--Pask algebra $\KP_\mathsf{F}(\Lambda)$ is not ultramatricial. 
\end{prop}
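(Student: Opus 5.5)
By Theorem \ref{th existence of generalized cycle via TM}$(ii)$, freeness of the $\mathbb{Z}^k$-action forces the generalized cycle $(\mu,\nu)$ to have an entrance. Setting $v=s(\mu)=s(\nu)$, Lemma \ref{lem generalized cycle} then gives the strict inequality
\[
^{d(\mu)}v<~^{d(\nu)}v \quad\text{in } T_\Lambda .
\]
My aim is to turn this into an infinite idempotent in $\KP_\mathsf{F}(\Lambda)$. Ultramatricial algebras are directly finite and contain no infinite idempotents, as recalled in Section \ref{sec prelim}, so this will finish the proof.

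The first step is to cancel the shift. Acting by $-d(\nu)$ gives $^{\bP}v<v$ with $\bP=d(\mu)-d(\nu)\neq 0$, so $v=~^{\bP}v+b$ for some $0\neq b\in T_\Lambda$. The second step is to transport this to modules. By the $\mathbb{Z}^k$-isomorphism $T_\Lambda\cong\mathcal{V}^{\gr}(\KP_\mathsf{F}(\Lambda))$ sending $v(\N)\mapsto[\KP_\mathsf{F}(\Lambda)p_v(\N)]$, there is a graded isomorphism
\[
\KP_\mathsf{F}(\Lambda)p_v\cong \KP_\mathsf{F}(\Lambda)p_v(\bP)\oplus Q
\]
for some nonzero graded finitely generated projective module $Q$. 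Forgetting the grading gives an isomorphism of ungraded modules $Ap_v\cong Ap_v\oplus Q$, where $A=\KP_\mathsf{F}(\Lambda)$, since the shift disappears when the grading is forgotten. Hence $p_v$ is an infinite idempotent. One could also use the nonunital version, realizing $Q$ as $Ae$ with $e$ orthogonal after conjugation; this is the standard equivalence mentioned in the preliminaries.

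The final step is to conclude. If $A$ were ultramatricial, then the corner $p_vAp_v\cong\operatorname{End}_A(Ap_v)$ would be a directly finite ring. Indeed, corners of ultramatricial algebras are ultramatricial, because $p_v$ lies in some matricial $A_n$ and $p_vAp_v=\varinjlim p_vA_mp_v$ is a limit of matricial algebras. But $Ap_v\cong Ap_v\oplus Q$ with $Q\neq 0$ yields $x,y$ in this corner with $xy=p_v\neq yx$, a contradiction. We also need $Q\neq0$ as a module, which holds because $b\neq 0$ and the monoid isomorphism is faithful.

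The main obstacle I expect is the direction and legitimacy of the strictness. I must check that Lemma \ref{lem generalized cycle} gives $^{d(\mu)}v<~^{d(\nu)}v$ rather than merely $\le$; this is exactly what the entrance supplies. I must also check that the passage from graded to ungraded module isomorphisms is harmless, which it is, since any graded isomorphism is in particular a module isomorphism. Everything else reduces to standard facts about direct finiteness of ultramatricial algebras.
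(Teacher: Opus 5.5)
Your argument is correct, but it takes a different route from the paper. The paper stays inside $\KP_\mathsf{F}(\Lambda)$ and builds the infinite idempotent by hand. First, $x=s_\mu s_{\nu^*}$ and $y=s_\nu s_{\mu^*}$ give $s_\mu s_{\mu^*}\sim s_\nu s_{\nu^*}$. Next, the identity $s(\mu)\Lambda^{\N}=\Ext(\mu,\{\nu\})$, together with (KP4) and the injection $\varphi$ from the proof of Lemma~\ref{lem generalized cycle}, gives $s_\mu s_{\mu^*}\leqslant s_\nu s_{\nu^*}$. Finally, an entrance $\tau$ (from Theorem~\ref{th existence of generalized cycle via TM}$(ii)$) satisfies $\Lambda^{\min}(\mu,\nu\tau)=\emptyset$, so $s_{\nu\tau}s_{(\nu\tau)^*}\leqslant s_\nu s_{\nu^*}$ is a nonzero idempotent orthogonal to $s_\mu s_{\mu^*}$, and hence $s_\nu s_{\nu^*}\sim s_\mu s_{\mu^*}<s_\nu s_{\nu^*}$. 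You instead use only the monoid statement of Lemma~\ref{lem generalized cycle}, shift it to $^{\bP}v<v$, and transport it through $T_\Lambda\cong\mathcal{V}^{\gr}(\KP_\mathsf{F}(\Lambda))$; forgetting the grading erases the shift and leaves $Ap_v\cong Ap_v\oplus Q$. Since $s_\nu s_{\nu^*}\sim p_v$, both proofs exhibit the same infinite class, and the paper's computation is essentially the algebra-level shadow of your monoid inequality. The paper's version is self-contained and gives explicit partial isometries. Yours is shorter and leans on the external realization theorem for $\mathcal{V}^{\gr}$, but it shows more: any $0\neq a\in T_\Lambda$ and $\bP\in\mathbb{Z}^k$ with $^{\bP}a<a$ already rule out ultramatriciality, whatever the combinatorial source of the inequality. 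One small slip: $\operatorname{End}_A(Ap_v)\cong(p_vAp_v)^{\mathrm{op}}$ rather than $p_vAp_v$, which is harmless because direct finiteness passes to opposite rings.
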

\begin{proof}
Note that the idempotents $s_\mu s_{\mu^*}$ and $s_\nu s_{\nu^*}$ of $\KP_\mathsf{F}(\Lambda)$ are Murray-von Neumann equivalent since $s_\mu s_{\mu^*}=s_\mu s_{\nu^*}s_\nu s_{\mu^*}$ and $s_\nu s_{\nu^*}=s_\nu s_{\mu^*} s_\mu s_{\nu^*}$. We claim that $s_\mu s_{\mu^*}< s_\nu s_{\nu^*}$. For this first we observe that $s_\mu s_{\mu^*}\leqslant s_\nu s_{\nu^*}$. The idea is same as that of \cite[Lemma 3.7]{Evans}, but we reproduce it here in the algebraic set-up. Let $\N,\M$ be as defined in the proof of Lemma \ref{lem generalized cycle}. Since $\Ext(\mu,\{\nu\})$ is exhaustive, following the proof of Lemma \ref{lem generalized cycle}, we have $s(\mu)\Lambda^\N=\Ext(\mu,\{\nu\})$. Now using the relation $(KP4)$ of the Kumjian--Pask algebra, we have \[p_{s(\mu)}=\displaystyle{\sum_{\alpha\in s(\mu)\Lambda^\N}} s_\alpha s_{\alpha^*}=\displaystyle{\sum_{\alpha\in \Ext(\mu,\{\nu\})}} s_\alpha s_{\alpha^*}.\] Then \[s_\mu s_{\mu^*}=s_\mu p_{s(\mu)} s_{\mu^*}=\displaystyle{\sum_{\alpha\in \Ext(\mu,\{\nu\})}} s_{\mu\alpha}s_{(\mu\alpha)^*}=\displaystyle{\sum_{\beta\in \image(\varphi)}} s_{\nu\beta}s_{(\nu\beta)^*},\] where $\varphi$ is the map defined in Lemma \ref{lem generalized cycle}. Again, we have \[s_\nu s_{\nu^*}=s_\nu p_{s(\nu)} s_{\nu^*}=\displaystyle{\sum_{\beta\in s(\nu)\Lambda^\M}} s_{\nu\beta}s_{(\nu\beta)^*}.\] Since $\image(\varphi)\subseteq s(\nu)\Lambda^\M$ and $s_{(\nu\beta)^*}s_{\nu\beta'}=0$ for all $\beta,\beta'\in s(\nu)\Lambda^\M$ with $\beta\neq \beta'$ (by $(KP3)$), \[s_\mu s_{\mu^*}s_\nu s_{\nu^*}=s_{\nu}s_{\nu^*}s_\mu s_{\mu^*}=s_\mu s_{\mu^*}.\] Hence, $s_\mu s_{\mu^*}\leqslant s_\mu s_{\nu^*}$. By Proposition \ref{th existence of generalized cycle via TM} $(ii)$, the condition in the statement implies that $(\mu,\nu)$ has an entrance. Let $\tau$ be an entrance. Using $(KP4)$ again, we can write \[s_\nu s_{\nu^*}=\displaystyle{\sum_{\eta\in s(\nu)\Lambda^{d(\tau)}}} s_{\nu\eta}s_{(\nu\eta)^*}.\] Since the summand are orthogonal and $s_{\nu\tau}s_{(\nu\tau)^*}$ is a summand, it follows that $s_{\nu\tau}s_{(\nu\tau)^*}\leqslant s_\nu s_{\nu^*}$. Now, since $\tau$ is an entrance to $(\mu,\nu)$, $\Lambda^{\min}(\mu,\nu\tau)=\emptyset$. This implies \[s_\mu s_{\mu^*}s_{\nu\tau} s_{(\nu\tau)^*}=s_\mu\left(\displaystyle{\sum_{(\gamma,\delta)\in \Lambda^{\min}(\mu,\nu\tau)}}s_\gamma s_{\delta^*}\right)s_{(\nu\tau)^*}=0.\] Similarly, $s_{\nu\tau}s_{(\nu\tau)^*} s_\mu s_{\mu^*}=0$. So $(s_\mu s_{\mu^*}+s_{\nu\tau}s_{(\nu\tau)^*})$ is an idempotent and clearly, $(s_\mu s_{\mu^*}+s_{\nu\tau}s_{(\nu\tau)^*})\leqslant s_\nu s_{\nu^*}$. Again $s_\mu s_{\mu^*}< (s_\mu s_{\mu^*}+s_{\nu\tau}s_{(\nu\tau)^*})$ since $s_{\nu\tau} s_{(\nu\tau)^*}\neq 0$. Combining the last two inequalities, the claim is established. Therefore, $s_\nu s_{\nu^*}\sim s_\mu s_{\mu^*}< s_\nu s_{\nu^*}$ and hence $s_\nu s_{\nu^*}$ is an infinite idempotent in $\KP_\mathsf{F}(\Lambda)$. Since ultramatricial algebras cannot contain infinite idempotents, the result follows.   
\end{proof}


Let us again refer to Example \ref{ex KP is purely infinite simple via TM}. Note that $\Lambda$ contains cycles and hence generalized cycles. For instance, consider the generalized cycle $(f_1,v)$. Since $\mathbb{Z}^2$ acts freely on $T_\Lambda$, this has an entrance, namely $f_2$ or $f_3$. The argument used in the proof of Theorem \ref{pro a necessary condition for KPA to be ultramatricial} now shows that the idempotent $p_v\in \KP_\mathsf{F}(\Lambda)$ is an infinite idempotent, and consequently, $\KP_\mathsf{F}(\Lambda)$ is not ultramatricial. It is in fact purely infinite as we see in Example \ref{ex KP is purely infinite simple via TM}. 

\section{Locally finite, and crossed product Kumjian--Pask algebras}\label{sec locally finite and crossed product}
In this section, we show that the talented monoid can effectively characterize when the $\mathbb{Z}^k$-graded Kumjian--Pask algebra of a $k$-graph is locally finite, and also when it is a crossed product. In addition to this, we also introduce a higher-dimensional analogue of the geometric condition of a directed graph producing crossed product Leavitt path algebras, and prove that this, along with the Condition (HNE) defined in Subsection \ref{ssec loops}, is sufficient for the Kumjian--Pask algebra to be a crossed product. However, it is not necessary unlike the one-dimensional case, as we show in Example \ref{ex HEDL is not necessary} making the structural divergence between directed graphs and general $k$-graphs more prominent. 

\subsection{Talented monoid criterion for locally finite Kumjian--Pask algebras}\label{ssec locally finite} The authors in \cite{AAS} have shown that the Leavitt path algebra of a directed graph $E$ is locally finite if and only if $E$ is finite and has Condition (NE) (see \cite[Theorem 1.8]{AAS}). The next proposition says that these conditions are also necessary for the Kumjian--Pask algebra of a $k$-graph to be locally finite as a $\mathbb{Z}^k$-graded algebra. 
\begin{prop}\label{pro Necessary condition for locally finite KPA}
Let $\Lambda$ be a row-finite $k$-graph without sources. If $\KP_\mathsf{F}(\Lambda)$ is locally finite, then $|\Lambda^0|< \infty$ and $\Lambda$ has Condition \textup{(HNE)}.    
\end{prop}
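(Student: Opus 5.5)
Both conclusions will be proved by contraposition: under each failure we exhibit an infinite linearly independent family inside a single homogeneous component of $\KP_\mathsf{F}(\Lambda)$. We use the standard fact (from \cite{Pino}, via the graded uniqueness theorem) that the nonzero elements $s_\lambda s_{\mu^*}$ with distinct pairs $(\lambda,\mu)$ can be chosen linearly independent. Concretely: the $p_v$, $v\in\Lambda^0$, are nonzero; the $s_\lambda$ are nonzero; and elements $s_\lambda s_{\mu^*}$ with distinct ranges or distinct sources are separated by multiplying on the left and right by vertex projections.

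For finiteness of $\Lambda^0$, we look at the component $\KP_\mathsf{F}(\Lambda)_0$. By (KP1) the $p_v$ are pairwise orthogonal nonzero idempotents, so they are linearly independent. Indeed, if $\sum c_v p_v=0$, multiplying by $p_u$ gives $c_up_u=0$, hence $c_u=0$. Since $\{p_v\}_{v\in\Lambda^0}\subseteq \KP_\mathsf{F}(\Lambda)_0$, local finiteness forces $|\Lambda^0|<\infty$.

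For Condition (HNE), suppose some loop $\mu$ has an entrance $\lambda$. As in the proof of Theorem \ref{th characterizing loops via TM}$(ii)$, choose $\tau\in s(\lambda)\Lambda^{d(\mu)-d(\lambda)}$. Then $\beta:=\lambda\tau\in s(\mu)\Lambda^{d(\mu)}$ and $\beta\neq\mu$. Put $\N=d(\mu)\neq 0$ and $v=s(\mu)=r(\mu)$. We consider the elements
\[
x_n:=s_{\mu^n}s_{\beta^*}\in \KP_\mathsf{F}(\Lambda)_{(n-1)\N},\qquad
y_n:=s_{\mu^{n}}s_{(\beta\mu^{n-1})^*}\in\KP_\mathsf{F}(\Lambda)_0 .
\]
The second family lies in a single component, so it is the one to use. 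We aim to show that the $y_n$, $n\ge 1$, are linearly independent, using orthogonality of the idempotents $e_n:=y_ny_n^*$. By (KP3), $y_n^*y_n=s_{\beta\mu^{n-1}}s_{(\beta\mu^{n-1})^*}$ and $y_ny_n^*=s_{\mu^n}s_{(\mu^n)^*}$. Rather than that, the cleaner route is to compute $y_m^* y_n$ using (KP3) and the fact that $\mu^n$ and $\mu^m$ are comparable paths: for $m\le n$, $s_{(\mu^m)^*}s_{\mu^n}=s_{\mu^{n-m}}$. This gives
\[
y_m^*y_n = s_{\beta\mu^{m-1}}\,s_{\mu^{n-m}}\,s_{(\beta\mu^{n-1})^*}
= s_{\beta\mu^{m-1}}s_{\mu^{n-m}}s_{(\mu^{n-m})^*}\,s_{(\beta\mu^{m-1})^*}\cdot(\text{adjusted}),
\]
and we instead sandwich. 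Suppose $\sum_n c_n y_n=0$. Multiply on the right by $s_{\beta\mu^{m-1}}$. By (KP3), $s_{(\beta\mu^{n-1})^*}s_{\beta\mu^{m-1}}$ equals $s_{\mu^{m-n}}$ if $n\le m$, and equals $s_{(\mu^{n-m})^*}$ if $n>m$ (cancelling the common prefix $\beta\mu^{m-1}$ or $\beta\mu^{n-1}$). This yields
\[
\sum_{n\le m}c_n s_{\mu^m}+\sum_{n>m}c_n s_{\mu^n}s_{(\mu^{n-m})^*}=0 .
\]
Next multiply on the left by $s_{(\mu^m)^*}$. The first sum becomes $\bigl(\sum_{n\le m}c_n\bigr)p_v$. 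The second becomes $\sum_{n>m}c_n\,s_{\mu^{n-m}}s_{(\mu^{n-m})^*}$. Now multiply on the left by $s_{\beta^*}$. Since $\beta\neq\mu$ and $d(\beta)=d(\mu)$, (KP3) gives $s_{\beta^*}s_{\mu^{j}}=0$ for $j\ge1$. What remains is $\bigl(\sum_{n\le m}c_n\bigr)s_{\beta^*}=0$, and $s_{\beta^*}\neq0$, so $\sum_{n\le m}c_n=0$ for every $m$. Hence all $c_n=0$.

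This gives an infinite independent family in $\KP_\mathsf{F}(\Lambda)_0$, contradicting local finiteness. The main obstacle is this independence computation, in particular keeping the (KP3) cancellations straight and using the nonvanishing of $s_{\beta^*}$ and $p_v$ (from \cite{Pino}). Note that this is exactly where the entrance enters: it is what makes $s_{\beta^*}s_{\mu}=0$ possible.
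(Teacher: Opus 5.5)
Your argument for $|\Lambda^0|<\infty$ is fine and is the same as the paper's. The (HNE) part has a genuine gap: the family $y_n=s_{\mu^n}s_{(\beta\mu^{n-1})^*}$ silently requires $s(\beta)=v$. The path $\beta\mu^{n-1}$ is composable only if $s(\beta)=r(\mu)=v$. Already $y_1=s_\mu s_{\beta^*}=s_\mu p_v p_{s(\beta)}s_{\beta^*}$ vanishes unless $s(\beta)=v$. Your cancellation $s_{(\beta\mu^{n-1})^*}s_{\beta\mu^{m-1}}=s_{\mu^{m-n}}$ also uses $s_{\beta^*}s_\beta=p_v$. But $\beta=\lambda\tau$ has source $s(\tau)$, which you do not control, and an entrance need not produce a second loop at $v$ of degree $d(\mu)$. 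For instance, take the $1$-graph with vertices $v,w$, loops $e$ at $v$ and $g$ at $w$, and one edge $f$ with $r(f)=v$, $s(f)=w$. It is row-finite without sources and $f$ is an entrance to $\mu=e$. Yet $v\Lambda v=\{e^n\mid n\geq 0\}$, so no admissible $\beta$ exists: $y_1=0$ and $y_n$ is undefined for $n\ge 2$.

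The repair is small and keeps your sandwich computation: use $e_n:=s_{\mu^n}s_{(\mu^n)^*}\in\KP_\mathsf{F}(\Lambda)_0$ instead. Suppose $\sum_n c_ne_n=0$, and multiply on the left by $s_{\beta^*}s_{(\mu^m)^*}$ and on the right by $s_{\mu^m}$. Each term with $n\le m$ becomes $c_ns_{\beta^*}p_v=c_ns_{\beta^*}$. Each term with $n>m$ becomes $c_ns_{\beta^*}s_{\mu^{n-m}}s_{(\mu^{n-m})^*}=0$. Hence $\bigl(\sum_{n\le m}c_n\bigr)s_{\beta^*}=0$ for every $m$, using only $r(\beta)=v$, $d(\beta)=d(\mu)$ and $\beta\neq\mu$. (Equivalently, the $s_{\mu^n\beta}s_{(\mu^n\beta)^*}$, $n\ge 0$, are pairwise orthogonal nonzero idempotents of degree $0$.) The paper works instead in degree $d(\mu)$ with $s_\mu^{i+1}s_{\mu^*}^i$, and kills the higher terms by right multiplication with $s_\lambda$ for the entrance itself; this likewise needs no return path to $v$. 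Finally, the ``standard fact'' in your opening paragraph is not true for the whole family, since (KP4) is a linear relation among distinct nonzero $s_\lambda s_{\mu^*}$. You never use it, though; only $p_v\neq 0$ and $s_{\beta^*}\neq 0$ are needed.
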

\begin{proof}
Suppose $\KP_\mathsf{F}(\Lambda)$ is locally finite. Clearly $\Lambda^0$ is a linearly independent subset of $\KP_\mathsf{F}(\Lambda)_0$. Hence, $|\Lambda^0|\le \dim_\mathsf{F}(\KP_\mathsf{F}(\Lambda)_0)<\infty$. Now, we show that $\Lambda$ must have Condition (HNE). If possible, suppose there is a loop $\mu$ in $\Lambda$ which has an entrance $\lambda$. Let $d(\mu)=\N$. Consider the set $X:=\{s_\mu^{i+1}s_{\mu^*}^i~|~i\in \mathbb{N}\}$. Then $X\subseteq \KP_\mathsf{F}(\Lambda)_\N$. We claim that $S$ is linearly independent. Let $S=\{s_\mu^{i_1+1}s_{\mu^*}^{i_1},s_\mu^{i_2+1}s_{\mu^*}^{i_2},\ldots,s_\mu^{i_t+1}s_{\mu^*}^{i_t}\}$ be any finite subset of $X$ with $i_1<i_2<\ldots <i_t$. Suppose $\displaystyle{\sum_{j=1}^{t}}c_js_\mu^{i_j+1}s_{\mu^*}^{i_j}=0$ for scalars $c_j\in \mathsf{F}$. Then multiplying on the left by $s_{\mu^*}^{i_1+1}$ and on the right by $s_\mu^{i_1}$, we get \[c_1p_{s(\mu)}+\displaystyle{\sum_{j=2}^{t}}c_js_\mu^{i_j-i_1}s_{\mu^*}^{i_j-i_1}=0,\] which when multiplying on the right by $s_\lambda$ gives \[c_1s_\lambda+\displaystyle{\sum_{j=2}^{t}}c_js_\mu^{i_j-i_1}s_{\mu^*}^{i_j-i_1}s_\lambda=0.\] Now, since $\lambda$ is an entrance to $\mu$, $d(\lambda)\le d(\mu)$ and $\mu(0,d(\lambda))\neq \lambda$. This implies for each $2\le j\le t$, \[s_\mu^{i_j-i_1}s_{\mu^*}^{i_j-i_1}s_\lambda=s_\mu^{i_j-i_1}s_{\mu^*}^{i_j-i_1-1}(s_{\mu^*}s_\lambda)=s_\mu^{i_j-i_1}s_{\mu^*}^{i_j-i_1-1}(s_{\mu(d(\lambda),d(\mu)-d(\lambda))^*}s_{\mu(0,d(\lambda))^*}s_\lambda)=0\] by Condition $(KP3)$ of the Kumjian--Pask algebra. Therefore, we have $c_1 p_{s(\lambda)}=s_{\lambda}^*(c_1s_\lambda)=0$ and so $c_1=0$ by \cite[Theorem 3.4]{Pino}. Following the exact similar argument in each case, it can be shown that $c_j=0$ for all $j=2,3,\ldots,t$. Thus, our claim is established. Now, this implies $\dim_\mathsf{F}(\KP_\mathsf{F}(\Lambda)_\N)\ge |X|=\infty$, contradicting that $\KP_\mathsf{F}(\Lambda)$ is locally finite. 
\end{proof}

We now proceed to show that the conditions on the $k$-graph mentioned in Proposition \ref{pro Necessary condition for locally finite KPA} are also sufficient for Kumjian--Pask algebras to be locally finite. In order to accomplish this, we need the following lemma which can be seen as a higher-dimensional extension of \cite[Lemma 1.3]{AAS}. 

Let us first fix some notations. Suppose $\Lambda$ is a row-finite $k$-graph with no sources and $\N\in \mathbb{Z}^k$. For any $\M\in \mathbb{N}^k$ with $\M\ge \N$, define \[\mS_\M^\N:=\{s_\alpha s_{\beta^*}\in \KP_\mathsf{F}(\Lambda)~|~d(\alpha)=\M~\text{and}~d(\beta)=\M-\N\}\cup \{0\}.\] If $|\Lambda^0|< \infty$, then each $\mS_\M^\N$ is a finite set. Note that in view of \cite[(3.3)]{Pino}, we have 
\begin{equation}\label{eq the span}
\KP_\mathsf{F}(\Lambda)_\N=\lspan_\mathsf{F} \{s_\alpha s_{\beta^*}~|~\alpha,\beta\in \Lambda~\text{and}~d(\beta)=d(\alpha)-\N\}=\lspan_\mathsf{F}\left(\displaystyle{\bigcup_{\M\ge \N}}\mS_\M^\N\right).
\end{equation}
For two $k$-tuples of integers $\bP=(p_1,p_2,\ldots,p_k)$ and $\Q=(q_1,q_2,\ldots,q_k)$, we define \[\sur(\bP,\Q):=\displaystyle{\sum_{i=1}^{k}}\max(p_i-q_i,0).\] It is easy to observe that $\bP\le \Q$ if and only if $\sur(\bP,\Q)=0$. 
\begin{lem}\label{lem the absorbing lemma}
\textup{($cf.$ \cite[Lemma 1.3]{AAS})} Let $\Lambda$ be a row-finite $k$-graph without sources and $\N\in \mathbb{Z}^k$. If there exists $\bT=(t_1,t_2,\ldots,t_k)\in \mathbb{N}^k$ such that $\bT\ge \N$ and $\mS_\Q^\N\subseteq \displaystyle{\bigcup_{\N\le \M\le \bT}}\mS_\M^\N$ for all $\Q\ge \N$ with $\sur(\Q,\bT)=1$, then $\displaystyle{\bigcup_{\M\ge \N}}\mS_\M^\N=\displaystyle{\bigcup_{\N\le \M\le \bT}} \mS_\M^\N$. Consequently, if $|\Lambda^0|< \infty$, then $\KP_\mathsf{F}(\Lambda)_\N$ is finite-dimensional. 
\end{lem}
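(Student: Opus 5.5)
We argue by induction on $\sur(\Q,\bT)$: for every $\Q\ge \N$, $\mS_\Q^\N\subseteq \bigcup_{\N\le \M\le \bT}\mS_\M^\N$. The case $\sur(\Q,\bT)=0$ is trivial, since then $\N\le\Q\le\bT$. The case $\sur(\Q,\bT)=1$ is exactly the hypothesis. The inclusion $\bigcup_{\M\ge\N}\mS_\M^\N\supseteq\bigcup_{\N\le\M\le\bT}\mS_\M^\N$ is obvious, so this induction gives the claimed equality.

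For the inductive step, take $\Q\ge\N$ with $\sur(\Q,\bT)=s\ge 2$ and a nonzero element $s_\alpha s_{\beta^*}\in\mS_\Q^\N$, so $d(\alpha)=\Q$ and $d(\beta)=\Q-\N$. Choose an index $i$ with $q_i>t_i$ and set $\Q':=\Q-\mathbf{e}_i$. Then $\sur(\Q',\bT)=s-1$ and $\Q'\ge\N$. The latter holds because $q_i-1\ge t_i\ge n_i$.

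Factor $\alpha=\alpha'e$ and $\beta=\beta'f$ by unique factorization, with $d(e)=d(f)=\mathbf{e}_i$. Then $s_\alpha s_{\beta^*}=s_{\alpha'}(s_es_{f^*})s_{\beta'^*}$, and $s_es_{f^*}\in\mS^{0}_{\mathbf{e}_i}$. This suggests the following alternative approach: peel off a \emph{common tail} and shift the degree. Write $\alpha=\alpha_1\alpha_2$ with $d(\alpha_1)=\Q'$, and $\beta=\beta_1\beta_2$ with $d(\beta_1)=\Q'-\N$, where $d(\alpha_2)=d(\beta_2)=\mathbf{e}_i$. The element $s_{\alpha_1}s_{\beta_1^*}$ lies in $\mS^\N_{\Q'}$, so by induction $s_{\alpha_1}s_{\beta_1^*}=s_\gamma s_{\delta^*}$ with $d(\gamma)=\M\le\bT$. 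Then $s_\alpha s_{\beta^*}=s_{\alpha_1}(s_{\alpha_2}s_{\beta_2^*})s_{\beta_1^*}$, which does not directly use $s_{\alpha_1}s_{\beta_1^*}$. So instead we would use the $\sur=1$ hypothesis at the final step, as follows.

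Consider the pair $(\alpha_{\text{tail}},\beta_{\text{tail}})$ obtained by removing a common initial segment. Decompose $\alpha=\alpha_1\alpha_2$ and $\beta=\beta_1\beta_2$ with $d(\alpha_1)=\Q-\mathbf{e}_i\wedge$-adjusted pieces so that $s_{\alpha_2}s_{\beta_2^*}\in\mS^\N_{\Q''}$ with $\sur(\Q'',\bT)=1$. Concretely, take $d(\alpha_1)=d(\beta_1)=\mathbf{p}:=(s-1)\mathbf{e}$-type vector; that is, choose $\mathbf{p}\in\mathbb{N}^k$ with $\mathbf{p}\le\Q-\N$, $\mathbf{p}\le\Q$ and $\sur(\Q-\mathbf{p},\bT)=1$, by removing from the surplus coordinates. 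This is possible because $\Q-\N\ge 0$ and the surplus coordinates satisfy $q_j>t_j\ge n_j$. Here $s_\alpha s_{\beta^*}=s_{\alpha_1}s_{\alpha_2}s_{\beta_2^*}s_{\beta_1^*}$ does not work unless $\alpha_1=\beta_1$. We therefore expand instead on the right. Apply the hypothesis to $s_{\alpha_2}s_{\beta_2^*}$, where $\alpha_2=\alpha(\mathbf{p},\Q)$ and $\beta_2=\beta(\mathbf{p},\Q-\N)$, to rewrite it as $s_\gamma s_{\delta^*}$ of smaller degree. The expression $s_{\alpha_1}s_\gamma s_{\delta^*}s_{\beta_1^*}=s_{\alpha_1\gamma}s_{(\beta_1\delta)^*}$ then lies in $\mS^\N_{\mathbf{p}+d(\gamma)}$. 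Its surplus is at most $s-1$ when $\mathbf{p}$ is chosen with a single surplus unit removed, namely $\mathbf{p}=\mathbf{e}_i$. This is the correct choice: with $\mathbf{p}=\mathbf{e}_i$, $\sur(\Q-\mathbf{e}_i,\bT)=s-1$, and we want the tail to have surplus $1$.

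Redoing this properly, we peel the \emph{head} rather than the tail. Let $\alpha=\alpha_1\alpha_2$ and $\beta=\beta_1\beta_2$ with $d(\alpha_1)=d(\beta_1)=\mathbf{p}$, where $\mathbf{p}$ is chosen so that $\sur(\Q-\mathbf{p},\bT)=1$ and $\mathbf{p}\le\Q-\N$. Then $s_\alpha s_{\beta^*}=s_{\alpha_1}(s_{\alpha_2}s_{\beta_2^*})s_{\beta_1^*}$ and $s_{\alpha_2}s_{\beta_2^*}\in\mS^\N_{\Q-\mathbf{p}}$. By hypothesis this equals $s_\gamma s_{\delta^*}$ with $\N\le d(\gamma)\le\bT$, or $0$. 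Hence $s_\alpha s_{\beta^*}=s_{\alpha_1\gamma}s_{(\beta_1\delta)^*}\in\mS^\N_{\mathbf{p}+d(\gamma)}$, with $\sur(\mathbf{p}+d(\gamma),\bT)\le\sur(\mathbf{p},0)$-type bound. This reduces the surplus whenever it is computed coordinatewise, and is then concluded by induction. In fact this rewrite achieves $\sur\le\sur(\Q,\bT)-1$ directly if we choose $\mathbf{p}$ to carry surplus $s-1$, with $\mathbf{p}=\Q-\Q_0$ for some $\Q_0$ with $\sur(\Q_0,\bT)=1$, $\Q_0\ge\N$, since $\mathbf{p}+d(\gamma)\le\mathbf{p}+\bT$. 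I expect this bookkeeping of surpluses, together with the requirement $\mathbf{p}\le\Q-\N$ (so that $\beta$ can be split), to be the main obstacle. It is handled by a one-coordinate-at-a-time induction, choosing $\mathbf{p}=\mathbf{e}_i$ and applying the hypothesis to the head piece instead, mirroring \cite[Lemma 1.3]{AAS}.

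Finally, the finite-dimensionality follows from (\ref{eq the span}). $\KP_\mathsf{F}(\Lambda)_\N$ is spanned by $\bigcup_{\N\le\M\le\bT}\mS^\N_\M$, which is a finite union of sets, each finite when $|\Lambda^0|<\infty$ by row-finiteness.
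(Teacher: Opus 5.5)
Your proposal contains a complete and correct proof: the argument in the paragraph beginning ``Redoing this properly''. Take $\mathbf{p}$ supported on the surplus coordinates, with $p_j\le q_j-t_j$ and $|\mathbf{p}|=s-1$. Then $\Q_0:=\Q-\mathbf{p}\ge\N$, $\sur(\Q_0,\bT)=1$, and both $\alpha$ and $\beta$ split off a head of degree $\mathbf{p}$. The hypothesis rewrites the tail $s_{\alpha_2}s_{\beta_2^*}\in\mS^\N_{\Q_0}$ as $s_\gamma s_{\delta^*}$ (or $0$) with $\N\le d(\gamma)\le\bT$. Since $\sur(\mathbf{p}+d(\gamma),\bT)\le|\mathbf{p}|=s-1$, strong induction finishes.

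The paper does the mirror image. It splits off a head of degree $\mathbf{e}_j$ only and applies the \emph{induction hypothesis} to the tail in $\mS^\N_{\Q-\mathbf{e}_j}$, which has surplus $s-1$. The reassembled element then lies in some $\mS^\N_\M$ with $\N+\mathbf{e}_j\le\M\le\bT+\mathbf{e}_j$, so $\sur(\M,\bT)\le1$, and the two base cases absorb it. Both arguments rest on the same estimate $\sur(\mathbf{p}+\M,\bT)\le|\mathbf{p}|$ for $\M\le\bT$. The paper's ordering needs only ordinary induction and a one-unit split. Yours invokes the $\sur=1$ hypothesis directly at every step, but needs strong induction. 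It also needs the extra care, which you did note, that $\mathbf{p}$ be drawn only from surplus coordinates; removing units elsewhere would break the surplus bound.

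You should remove the contradictory asides from the write-up.
\begin{itemize}
\item The identity $s_\alpha s_{\beta^*}=s_{\alpha_1}s_{\alpha_2}s_{\beta_2^*}s_{\beta_1^*}$ holds for any such factorizations by (KP2), not only when $\alpha_1=\beta_1$.
\item With $\mathbf{p}=\mathbf{e}_i$ the tail has surplus $s-1$, not $1$, so for $s\ge3$ the $\sur=1$ hypothesis cannot be applied to it. That choice works only if you apply the induction hypothesis to the tail and the base cases to the reassembled element, which is exactly the paper's argument. Your closing sentence conflates the two versions.
\end{itemize}
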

\begin{proof}
We need to show that $\mS_\Q^\N\subseteq \displaystyle{\bigcup_{\N\le \M\le \bT}} \mS_\M^\N$ for all $\Q\ge \N$. For this, we use induction on the counter $\sur(\Q,\bT)$. If $\sur(\Q,\bT)=0$, then $\Q\le \bT$ and there is nothing to prove. If $\sur(\Q,\bT)=1$, then the result follows from the condition of the Lemma. Hence, we are done for the two base cases. Now, assume that the result holds for any $\Q\ge \N$ with $\sur(\Q,\bT)=l$ for some $l\ge 1$. Choose any $\Q=(q_1,q_2,\ldots,q_k)\ge \N$ with $\sur(\Q,\bT)=l+1$. Then there exists at least one $j\in \{1,2,\ldots,k\}$ such that $q_j-t_j\ge 1$. Let $s_\alpha s_{\beta^*}\in \mS_\Q^\N$. Then $d(\alpha(\mathbf{e}_j,d(\alpha)))=\Q-\mathbf{e}_j$ and $d(\beta(\mathbf{e}_j,d(\beta)))=\Q-\mathbf{e}_j-\N$. Since $\Q,\bT\ge \N$ and $q_j-t_j\ge 1$, $\Q-\mathbf{e}_j\ge \N$. Also \[\sur(\Q-\mathbf{e}_j,\bT)=\displaystyle{\sum_{i=1}^{k}} \max(q_i-\delta_{ji}-t_i,0)=\left(\displaystyle{\sum_{i=1}^{k}}\max(q_i-t_i,0)\right)-1=\sur(\Q,\bT)-1=l.\] Thus, by the induction hypothesis, $s_{\alpha(\mathbf{e}_j,d(\alpha))}s_{\beta(\mathbf{e}_j,d(\beta))^*}\in \mS_{\Q-\mathbf{e}_j}^\N\subseteq \displaystyle{\bigcup_{\N\le \M\le \bT}} \mS_\M^\N$. Then 
\begin{align*}
s_\alpha s_{\beta^*}&=s_{\alpha(0,\mathbf{e}_j)}(s_{\alpha(\mathbf{e}_j,d(\alpha))}s_{\beta(\mathbf{e}_j,d(\beta))^*})s_{\beta(0,\mathbf{e}_j)^*}\\
&\subseteq s_{\alpha(0,\mathbf{e}_j)}\left(\displaystyle{\bigcup_{\N\le \M\le \bT}} \mS_\M^\N\right)s_{\beta(0,\mathbf{e}_j)^*}\\
&\subseteq \displaystyle{\bigcup_{\N+\mathbf{e}_j\le \M\le \bT+\mathbf{e}_j}} \mS_\M^\N=\left(\displaystyle{\bigcup_{\substack{\N+\mathbf{e}_j\le \M\le \bT+\mathbf{e}_j\\\sur(\M,\bT)=0}}} \mS_\M^\N\right)\cup \left(\displaystyle{\bigcup_{\substack{\N+\mathbf{e}_j\le \M\le \bT+\mathbf{e}_j\\\sur(\M,\bT)=1}}} \mS_\M^\N\right) \subseteq \displaystyle{\bigcup_{\N\le \M\le \bT}} \mS_\M^\N,
\end{align*} 
where the last inclusion follows from the two base cases. Therefore, by induction $\mS_\Q^\N\subseteq \displaystyle{\bigcup_{\N\le \M\le \bT}} \mS_\M^\N$ for all $\Q\ge \N$. The last statement is now evident from (\ref{eq the span}) by noting that $\displaystyle{\bigcup_{\N\le \M\le \bT}} \mS_\M^\N$ is a finite union of finite sets.  
\end{proof}

Now we are ready to prove the following theorem which extends \cite[Theorem 1.8]{AAS} to higher dimensions.
\begin{thm}\label{th N&S condition for locally finite KPA}
Let $\Lambda$ be a row-finite $k$-graph without sources. Then   $\KP_\mathsf{F}(\Lambda)$ is locally finite as a $\mathbb{Z}^k$-graded algebra if and only if $|\Lambda^0|< \infty$ and $\Lambda$ has Condition \textup{(HNE)}.    
\end{thm}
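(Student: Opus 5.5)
Necessity is exactly Proposition \ref{pro Necessary condition for locally finite KPA}, so only sufficiency needs work. Assume $|\Lambda^0|<\infty$ and that $\Lambda$ has Condition (HNE), and fix $\N\in\mathbb{Z}^k$. The plan is to produce $\bT\ge\N$ satisfying the hypothesis of Lemma \ref{lem the absorbing lemma}, that is, $\mS_\Q^\N\subseteq\bigcup_{\N\le\M\le\bT}\mS_\M^\N$ whenever $\Q\ge\N$ and $\sur(\Q,\bT)=1$. The lemma then gives $\dim_\mathsf{F}\KP_\mathsf{F}(\Lambda)_\N<\infty$.

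\textbf{Step 1: geometry of long unicolor stretches.} Put $L:=|\Lambda^0|$. Any unicolor path in direction $\mathbf{e}_j$ of length at least $L$ visits some vertex twice, so it contains a unicolor loop $\mu$ with $d(\mu)=l\mathbf{e}_j$, $1\le l\le L$. By (HNE), $\mu$ has no entrance. As in the proof of Proposition \ref{pro sufficient condition for loop with entrance}, this gives $s(\mu)\Lambda^{d(\mu)}=\{\mu\}$. Iterating, every vertex $x$ on $\mu$ satisfies $|x\Lambda^{m\mathbf{e}_j}|=1$ for all $m\in\mathbb{N}$. In the algebra this means that the unique path $\eta\in x\Lambda^{m\mathbf{e}_j}$ satisfies $s_\eta s_{\eta^*}=p_x$ (by (KP4)) and $s_{\eta^*}s_\eta=p_{s(\eta)}$ (by (KP3)). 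Hence $s_\eta$ behaves like a unitary between the corners $p_x$ and $p_{s(\eta)}$.

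\textbf{Step 2: choice of $\bT$ and absorption.} Take $\bT$ with $t_i\ge\max(n_i,0)+2L$ for every $i$. Let $\Q\ge\N$ with $\sur(\Q,\bT)=1$, so that $q_j=t_j+1$ for exactly one $j$ and $q_i\le t_i$ for $i\neq j$. Let $s_\alpha s_{\beta^*}\in\mS_\Q^\N$, with $d(\alpha)=\Q$, $d(\beta)=\Q-\N$ and $s(\alpha)=s(\beta)=:w$. Factor $\alpha=\alpha'\alpha''$ and $\beta=\beta'\beta''$, where $\alpha'',\beta''$ are the final unicolor $\mathbf{e}_j$-segments of degree $c\,\mathbf{e}_j$ with $c\ge L$; this is possible because $q_j-n_j>2L$. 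By Step 1, $\alpha''$ passes through a vertex lying on an entrance-free $\mathbf{e}_j$-loop. I will then rewrite the end of $\alpha''$ inside the corner of that loop using $s_\eta s_{\eta^*}=p_x$, so as to strip a copy of the loop from $\alpha$ and the matching copy from $\beta$. This relies on $w$ having a unique outgoing $\mathbf{e}_j$-path of every length: in the skew picture, the $\mathbf{e}_j$-flow from $w$ is a single path, and $\alpha''$ and $\beta''$ necessarily coincide on a common tail. The outcome is an expression $s_\alpha s_{\beta^*}=s_{\tilde\alpha}s_{\tilde\beta^*}$ with $d(\tilde\alpha)=\Q-l\mathbf{e}_j$ and $d(\tilde\beta)=\Q-\N-l\mathbf{e}_j$, which lies in $\mS^\N_{\Q-l\mathbf{e}_j}$ with $\Q-l\mathbf{e}_j\le\bT$ and $\Q-l\mathbf{e}_j\ge\N$ (the latter by the size of $t_j$).

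\textbf{Main obstacle.} The delicate point is the loop-removal in Step 2. Condition (HNE) only controls paths of degree at most $d(\mu)$ out of $s(\mu)$, and $\mu$ is unicolor. So I must make sure the loop can be moved, via unique factorization, to sit adjacent to the common source $w$ in both $\alpha$ and $\beta$, in the same position. Only then do the identities $s_\mu s_{\mu^*}=p_{r(\mu)}$ and $s_{\mu^*}s_\mu=p_{s(\mu)}$ cancel it without disturbing the other coordinates. If the direct factorization argument becomes messy, I will first pass to a cofinal tail: flow $w$ in direction $\mathbf{e}_j$ until it lies on an entrance-free loop. All $\mathbf{e}_j$-paths from such vertices are unique, so the portions of $\alpha$ and $\beta$ after that point are forced to agree, and the deletion reduces to the one-dimensional argument of \cite[Lemma 1.3]{AAS}.
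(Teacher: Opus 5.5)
\textbf{Verdict: a gap at the key step.} Your skeleton is the paper's. You reduce to Lemma~\ref{lem the absorbing lemma}, choose $\bT$ large relative to $\N$ and $|\Lambda^0|$, and use the long final $\mathbf{e}_j$-segments of $\alpha$ and $\beta$ (both ending at the common source $w$) to reach an entrance-free unicolor cycle. You then cancel via (KP4). Stripping a whole period $l\mathbf{e}_j$ instead of the paper's single last edge $\alpha(\Q-\mathbf{e}_j,\Q)$ would be fine, and your check $\N\le\Q-l\mathbf{e}_j\le\bT$ is correct. But the one step with real content, that $\alpha''$ and $\beta''$ end in the \emph{same} segment, is asserted rather than proved. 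The reason you give points the wrong way, and you flag this step as the main obstacle yourself without closing it.

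\textbf{Why the stated reason fails.} The fact $|w\Lambda^{m\mathbf{e}_j}|=1$ (uniqueness of the $\mathbf{e}_j$-flow out of $(w,\cdot)$ in $\overline{\Lambda}$) concerns paths with \emph{range} $w$. But $\alpha''$ and $\beta''$ are paths with \emph{source} $w$. A vertex on an entrance-free cycle can be the source of several $\mathbf{e}_j$-edges. In Example~\ref{ex HEDL cannot be removed}, $v$ is the source of both $a_1$ and $a_3$, although $v\Lambda^{m\mathbf{e}_1}$ is a singleton for every $m$. So uniqueness at $w$ alone says nothing about the last edges of $\alpha$ and $\beta$.

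Your fallback goes the same wrong way. Flowing $w$ further appends paths beyond $w$, which raises degrees instead of absorbing them. It is also unnecessary, since $w$ already lies on a cycle once the tails have length $\ge|\Lambda^0|$. Nor does any loop need to be ``moved'' by unique factorization.

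\textbf{What is missing.} Two steps fill the gap.
\begin{enumerate}
\item Apply your Step~1 at the first repeated vertex of each tail, not at $w$. Let $b$ be minimal with $x:=\alpha''(b\mathbf{e}_j)=\alpha''(a\mathbf{e}_j)$ for some $a<b$; then $b\le|\Lambda^0|\le c$. The rest of $\alpha''$ is the unique element of $x\Lambda^{(c-a)\mathbf{e}_j}$, so it winds around the entrance-free cycle $\alpha''(a\mathbf{e}_j,b\mathbf{e}_j)$ of length $l=b-a$. Since $c-l\ge a$, the last $l$ edges of $\alpha$ form a cycle based at $w$. Likewise, the last $l'$ edges of $\beta$ form a cycle based at $w$.
\item Show $l=l'$. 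Here your uniqueness at $w$ is the right tool: both cycles lie in $w\Lambda^{l\mathbf{e}_j}$ and $w\Lambda^{l'\mathbf{e}_j}$, so the shorter is an initial segment of the longer. A cycle visits its base only at its two ends, so $l<l'$ is impossible. The paper establishes exactly this when it compares cycles $\mu,\nu$ with $\mu(\mathbf{e}_j)=\nu(\mathbf{e}_j)=w$ and $d(\mu)\le d(\nu)$. If $d(\mu)<d(\nu)$, it derives $\nu(d(\mu)+\mathbf{e}_j)=\nu(\mathbf{e}_j)$, a contradiction.
\end{enumerate}
Only after these two steps do both final segments equal the unique $\eta\in w\Lambda^{l\mathbf{e}_j}$, and then $s_\eta s_{\eta^*}=p_w$ cancels them.

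The paper needs slightly less at the end. Once the last edges agree, $e:=\alpha(\Q-\mathbf{e}_j,\Q)$ satisfies $r(e)\Lambda^{\mathbf{e}_j}=\{e\}$. Hence $s_es_{e^*}=p_{r(e)}$, and $s_\alpha s_{\beta^*}$ lands in $\mS^\N_{\Q-\mathbf{e}_j}$.
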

\begin{proof}
The necessity is already established in Proposition \ref{pro Necessary condition for locally finite KPA}. So assume that $|\Lambda^0|< \infty$ and $\Lambda$ has Condition \textup{(HNE)}. Let $\N=(n_1,n_2,\ldots,n_k)\in \mathbb{Z}^k$. We consider the $k$-tuple $\bT=(t_1,t_2,\ldots,t_k)$, where $t_i=2\times \max(n_i,|\Lambda^0|)$ for each $i=1,2,\ldots,k$. Clearly, $\bT\ge\N$. If we can show that $\mS_\Q^\N\subseteq \displaystyle{\bigcup_{\N\le \M\le \bT}}\mS_\M^\N$ for all $\Q\ge \N$ with $\sur(\Q,\bT)=1$, then we are done by Lemma \ref{lem the absorbing lemma}. So let us choose such a $\Q$. Then there exists a $j\in \{1,2,\ldots,k\}$ such that $q_j-t_j=1$ and $q_i\le t_i$ for all $i\neq j$. Let $s_\alpha s_{\beta^*}\in \mS_\Q^\N\setminus \{0\}$, $\alpha':=\alpha(d(\alpha)-q_j\mathbf{e}_j,d(\alpha))$ and $\beta':=\beta(d(\beta)-(q_j-n_j)\mathbf{e}_j,d(\beta))$. Since $\Lambda$ has no sources, $q_j=t_j+1>t_j\ge |\Lambda^0|$ and \[q_j-n_j=t_j+1-n_j=\max(n_j,2|\Lambda^0|-n_j)+1>|\Lambda^0|,\] there must exist cycles $\mu$ and $\nu$ entirely made of edges of degree $\mathbf{e}_j$ such that \[\alpha'((q_j-1)\mathbf{e}_j,q_j\mathbf{e}_j)=\mu(0,\mathbf{e}_j),\,  \text{ and } \, \beta'((q_j-n_j-1)\mathbf{e}_j,(q_j-n_j)\mathbf{e}_j)=\nu(0,\mathbf{e}_j).\] Without loss of generality, assume that $d(\mu)\le d(\nu)$. We claim that $\mu(0,\mathbf{e}_j)=\nu(0,\mathbf{e}_j)$. Since $s_\alpha s_{\beta^*}\neq 0$, $s(\alpha)=s(\beta)$, whence $\mu(\mathbf{e}_j)=s(\mu(0,\mathbf{e}_j))=s(\nu(0,\mathbf{e}_j))=\nu(\mathbf{e}_j)$. Since no loop has an entrance, we must have $\mu(\mathbf{e}_j,d(\mu))=\nu(\mathbf{e}_j,d(\mu))$, otherwise $\nu(\mathbf{e}_j,d(\mu))$ would be an entrance to the loop $\sigma_{\mathbf{e}_j}(\mu)$. Now, if $d(\mu)=d(\nu)$, then \[\mu(0)=\mu(d(\mu))=s(\mu(\mathbf{e}_j,d(\mu)))=s(\nu(\mathbf{e}_j,d(\nu)))=\nu(d(\nu))=\nu(0),\] and consequently, $\mu(0,\mathbf{e}_j)=\nu(0,\mathbf{e}_j)$ as otherwise $\nu(0,\mathbf{e}_j)$ would be an entrance to $\mu$. On the other hand, if $d(\mu)< d(\nu)$, then $\mu(0,\mathbf{e}_j)=\nu(d(\mu),d(\mu)+\mathbf{e}_j)$ as otherwise $\nu(d(\mu),d(\mu)+\mathbf{e}_j)$ would be an entrance to $\mu$. But then $\nu(d(\mu)+\mathbf{e}_j)=\mu(\mathbf{e}_j)=\nu(\mathbf{e}_j)$ which is not possible since $\nu$ is a cycle (see Definition \ref{def cycle in a k-graph}). Therefore, the claim is established. Now, since $\mu$ has no entrance, $r(\mu)\Lambda^{\mathbf{e}_j}=\{\mu(0,\mathbf{e}_j)\}$. Applying Condition $(KP4)$ of the Kumjian--Pask algebra, we have \[s_\alpha s_{\beta^*}=s_{\alpha(0,d(\alpha)-\mathbf{e}_j)}(s_{\alpha'((q_j-1)\mathbf{e}_j,q_j\mathbf{e}_j)}s_{\beta'((q_j-n_j-1)\mathbf{e}_j,(q_j-n_j)\mathbf{e}_j)^*})s_{\beta(0,d(\beta)-\mathbf{e}_j)^*}=s_{\alpha(0,d(\alpha)-\mathbf{e}_j)}s_{\beta(0,d(\beta)-\mathbf{e}_j)^*}.\] Since $q_i\ge n_i$ for all $i\neq j$ and $q_j-1=t_j\ge n_j$, $d(\alpha)-\mathbf{e}_j\ge \N$. Again, $d(\alpha)-\mathbf{e}_j=\Q-\mathbf{e}_j\le \bT$. From these, it follows that \[s_\alpha s_{\beta^*}=s_{\alpha(0,d(\alpha)-\mathbf{e}_j)}s_{\beta(0,d(\beta)-\mathbf{e}_j)^*}\in \displaystyle{\bigcup_{\N\le \M\le \bT}} \mS_\M^\N\] and hence the proof. 
\end{proof}

We now merge our findings and present the following theorem showcasing another instance which connects the geometry of a $k$-graph, the algebraic structure of the Kumjian--Pask algebra and the $\mathbb{Z}^k$-monoid structure of the talented monoid. Also it helps us to explicitly describe locally finite Kumjian--Pask algebras of $k$-graphs which arise as the pullback of $1$-graphs. 
\begin{thm}\label{th LOcally finite KPA of pullback}
Let $\Lambda$ be a row-finite $k$-graph without sources such that $|\Lambda^0|<\infty$. Then the following statements are equivalent.

$(i)$ $\KP_\mathsf{F}(\Lambda)$ is locally finite.

$(ii)$ $\Lambda$ has Condition \textup{(HNE)}.

$(iii)$ For any $0\neq a\in T_\Lambda$ and $i=1,2,\ldots,k$, there exists $n_i\in \mathbb{N}\setminus \{0\}$ such that $^{n_i\mathbf{e}_i}a=a$.

Moreover, if $\Lambda=f^*(E^*)$ for a directed graph $E$ and a surjective homomorphism $f:\mathbb{N}^k\longrightarrow \mathbb{N}$, and any one of the above holds, then there exist positive integers $t$ and $m_i$, $i=1,2,\ldots,t$ such that \[\KP_\mathsf{F}(\Lambda)\cong \displaystyle{\bigoplus_{i=1}^{t}}~\mathbb{M}_{m_i}(\mathsf{F}[\mathbb{Z}^k]).\]
\end{thm}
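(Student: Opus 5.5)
The equivalences follow from earlier results, so the plan is short there and concentrates on the ``moreover'' part.

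\emph{Equivalences.} Since $|\Lambda^0|<\infty$ is assumed, Theorem~\ref{th N&S condition for locally finite KPA} gives $(i)\Leftrightarrow(ii)$ directly. The final statement of Corollary~\ref{cor characterizing HL and HNE via TM}, which also uses $|\Lambda^0|<\infty$, gives $(ii)\Leftrightarrow(iii)$.

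\emph{Structure of the pullback.} Now let $\Lambda=f^*(E^*)$ with $f:\mathbb{N}^k\to\mathbb{N}$ surjective. The strategy is to move the problem to the graph $E$ and use the one-dimensional theory.

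First, $E$ is finite and has no sinks. Indeed, $\Lambda^0=E^0$, $\Lambda$ has no sources, and $E$ is row-finite. Since $f$ is surjective, some $\mathbf{e}_i$ has $f(\mathbf{e}_i)\ge 1$, so every vertex of $E$ receives a path; hence $E$ has no sinks in the Leavitt path algebra convention.

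Second, $E$ satisfies Condition (NE). By Corollary~\ref{cor Condition HL and HNE under pullback}$(i)$, $E^*$ satisfies (HNE). For a $1$-graph, (HNE) is exactly the condition that no cycle has an exit.

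The main step is to identify $\KP_\mathsf{F}(f^*(E^*))$ in terms of $L_\mathsf{F}(E)$. I would aim for a graded isomorphism
\[
\KP_\mathsf{F}(f^*(E^*))\cong L_\mathsf{F}(E)\otimes_{\mathsf{F}}\mathsf{F}[\ker \tilde f],
\]
where $\tilde f:\mathbb{Z}^k\to\mathbb{Z}$ is the extension of $f$, so $\ker\tilde f\cong\mathbb{Z}^{k-1}$. This is the algebraic analogue of the Kumjian--Pask result $C^*(f^*\Lambda)\cong C^*(\Lambda)\otimes C(\mathbb{T}^{k-\ell})$ for surjective $f$.

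To build it, fix a splitting $\mathbb{Z}^k=\mathbb{Z}\mathbf{c}\oplus\ker\tilde f$ with $\tilde f(\mathbf{c})=1$. Define the map on generators by
\[
s_{(\lambda,\N)}\mapsto s_\lambda\otimes z^{\N-d(\lambda)\mathbf{c}},
\]
which makes sense because $\tilde f(\N-d(\lambda)\mathbf{c})=0$. One then checks relations (KP1)--(KP4): the only nontrivial one is (KP4), and it follows from the (CK2) relations iterated to length $f(\N)$. The inverse sends $s_e\otimes 1$ to $s_{(e,\mathbf{e}_i)}$ for a fixed $\mathbf{e}_i$ with $f(\mathbf{e}_i)=1$, if one exists. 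Generators $z^{\bP}$ with $\bP\in\ker\tilde f$ are sent to the element
\[
\sum_{v\in E^0}\sum_{\lambda\in v E^{*f(\bP_+)}} s_{(\lambda,\bP_+)}\,s_{(\lambda,\bP_-)^*},
\]
where $\bP=\bP_+-\bP_-$. I expect the main obstacle to be the case where no $\mathbf{e}_i$ has $f(\mathbf{e}_i)=1$. Then no generator of degree one is available, and I would instead build the inverse using the graded uniqueness theorem (\cite{Pino}) applied to the forward map: it is graded and nonzero on every vertex, hence injective, and surjectivity is checked on generators.

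Finally, by \cite{AAS} (or the structure theorem for Leavitt path algebras of finite graphs with Condition (NE) and no sinks, \cite{lpabook}), $L_\mathsf{F}(E)\cong\bigoplus_{i=1}^{t}\mathbb{M}_{m_i}(\mathsf{F}[x,x^{-1}])$. Tensoring with $\mathsf{F}[\mathbb{Z}^{k-1}]$ gives
\[
\bigoplus_{i=1}^{t}\mathbb{M}_{m_i}(\mathsf{F}[\mathbb{Z}]\otimes\mathsf{F}[\mathbb{Z}^{k-1}])\cong\bigoplus_{i=1}^{t}\mathbb{M}_{m_i}(\mathsf{F}[\mathbb{Z}^k]).
\]
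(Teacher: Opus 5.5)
\textbf{Verdict.} Your proof is correct but reaches the key isomorphism by a different route from the paper.

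\textbf{What matches the paper.} Your handling of $(i)\Leftrightarrow(ii)\Leftrightarrow(iii)$ is the same as the paper's. So is your reduction of the last part to $E$: $E$ is finite with no sinks, satisfies Condition (NE) by Corollary~\ref{cor Condition HL and HNE under pullback}$(i)$, and then the structure theorem of \cite{AAS} applies.

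\textbf{Where you differ.} The difference is how you obtain $\KP_\mathsf{F}(f^*(E^*))\cong L_\mathsf{F}(E)\otimes_\mathsf{F}\mathsf{F}[\mathbb{Z}^{k-1}]$. The paper uses groupoids. \cite[Proposition 2.10]{KumjianPask} gives $\mathcal{G}_\Lambda\cong\mathcal{G}_E\times\mathbb{Z}^{k-1}$, and \cite[Theorem 4.3]{Rigby} identifies the Steinberg algebra of a product groupoid with the tensor product of the Steinberg algebras. You instead build the isomorphism from generators and relations, and this works:
\begin{enumerate}
\item[(a)] Your forward map $s_{(\lambda,\N)}\mapsto s_\lambda\otimes z^{\N-|\lambda|\mathbf{c}}$ respects (KP1)--(KP4).
\item[(b)] It is $\mathbb{Z}^k$-graded once $L_\mathsf{F}(E)\otimes\mathsf{F}[\ker\tilde f]$ is given the grading $\deg(s_\lambda s_{\mu^*}\otimes z^{\bP})=(|\lambda|-|\mu|)\mathbf{c}+\bP$, transported along $\mathbb{Z}\times\ker\tilde f\cong\mathbb{Z}^k$.
\item[(c)] It is nonzero on every $p_v$, so the graded uniqueness theorem of \cite{Pino} gives injectivity.
\item[(d)] It is surjective. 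Your element $\sum_v\sum_\lambda s_{(\lambda,\bP_+)}s_{(\lambda,\bP_-)^*}$ maps to $1\otimes z^{\bP}$. Also, $s_e\otimes 1$ is the image of $s_{(e,\N)}$ times a preimage of $1\otimes z^{\mathbf{c}-\N}$, for any $\N$ with $f(\N)=1$.
\end{enumerate}
Make this graded-uniqueness argument your main line rather than a fallback. It spares you from checking that your proposed images of the $z^{\bP}$ are independent of the decomposition $\bP=\bP_+-\bP_-$, are multiplicative, and commute with the image of $L_\mathsf{F}(E)$.

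\textbf{What each route buys.} The paper's route is shorter given the groupoid and Steinberg-algebra machinery, and it mirrors the $C^*$-algebraic result. Yours is self-contained and purely algebraic, and it produces an explicit $\mathbb{Z}^k$-graded isomorphism.

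\textbf{Two small corrections.}
\begin{enumerate}
\item[(1)] The obstacle you anticipate cannot occur. Since $f(\N)=\sum_i n_i f(\mathbf{e}_i)$ with every term in $\mathbb{N}$, surjectivity of $f:\mathbb{N}^k\to\mathbb{N}$ forces $f(\mathbf{e}_i)=1$ for some $i$.
\item[(2)] Your explicit inverse $s_e\otimes 1\mapsto s_{(e,\mathbf{e}_i)}$ is compatible with the forward map only if you choose $\mathbf{c}=\mathbf{e}_i$. For a general $\mathbf{c}$ it must be twisted by the image of $z^{\mathbf{c}-\mathbf{e}_i}$.
\end{enumerate}
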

\begin{proof}
The equivalence of the statements $(i)$--$(iii)$ follows from Corollary \ref{cor characterizing HL and HNE via TM} and Theorem \ref{th N&S condition for locally finite KPA}. Now assume that $\Lambda$ is a pullback as described in the final statement and $(ii)$ holds. Then $E$ has Condition (NE) by Corollary \ref{cor Condition HL and HNE under pullback}$(i)$. Now, since $E$ is finite and has no sinks (this is because $|\Lambda^0|<\infty$ and $\Lambda$ is row-finite with no sources), from \cite[Theorems 1.8 \& 3.8]{AAS}, it follows that 
\begin{equation}\label{eq the iso}
L_\mathsf{F}(E)\cong \displaystyle{\bigoplus_{i=1}^{t}}~\mathbb{M}_{m_i}(\mathsf{F}[\mathbb{Z}]),    
\end{equation}
where $t$ is the number of cycles in $E$ and for each $i=1,2,\ldots,t$, $m_i$ is the number of paths ending at a fixed vertex on the $i$-th cycle excluding the cycle itself. Since the homomorphism $f$ is surjective, we can apply \cite[Proposition 2.10]{KumjianPask} to obtain $\mathcal{G}_{\Lambda}\cong \mathcal{G}_{E}\times \mathbb{Z}^{k-1}$, where $\mathcal{G}_\Lambda$ is the infinite path groupoid of $\Lambda$ and $\mathcal{G}_E$ is the graph groupoid of $E$. Now, applying \cite[Theorem 4.3]{Rigby} together with the isomorphism (\ref{eq the iso}), we have \[\KP_\mathsf{F}(\Lambda)\cong A_\mathsf{F}(\mathcal{G}_\Lambda)\cong A_\mathsf{F}(\mathcal{G}_E)\otimes_\mathsf{F} A_\mathsf{F}(\mathbb{Z}^{k-1})\cong L_\mathsf{F}(E)\otimes_\mathsf{F} \mathsf{F}[\mathbb{Z}^{k-1}]\cong \displaystyle{\bigoplus_{i=1}^{t}}\left(\mathbb{M}_{m_i}(\mathsf{F}[\mathbb{Z}])\otimes_\mathsf{F} \mathsf{F}[\mathbb{Z}^{k-1}]\right).\] Finally, since the tensor product is associative upto isomorphism, \[\mathbb{M}_{m_i}(\mathsf{F}[\mathbb{Z}])\otimes_\mathsf{F} \mathsf{F}[\mathbb{Z}^{k-1}]\cong (\mathbb{M}_{m_i}(\mathsf{F})\otimes_\mathsf{F} \mathsf{F}[\mathbb{Z}])\otimes_\mathsf{F} \mathsf{F}[\mathbb{Z}^{k-1}]\cong \mathbb{M}_{m_i}(\mathsf{F})\otimes_\mathsf{F} \mathsf{F}[\mathbb{Z}^k]\cong \mathbb{M}_{m_i}(\mathsf{F}[\mathbb{Z}^k])\] and the result follows. 
\end{proof}
As an immediate consequence of the first part of the above theorem, we obtain the following interesting result which says that the graded $K$-theory is capable to distinguish the class of locally finite Kumjian--Pask algebras.
\begin{cor}\label{cor distinguishing locally finite KPAs}
Let $\Lambda$ and $\Omega$ be row-finite $k$-graphs without sources such that $|\Lambda^0|, |\Omega^0|<\infty$. Suppose any one of the following equivalent conditions holds.

$(i)$ There is a $\mathbb{Z}^k$-monoid isomorphism between $T_\Lambda$ and $T_\Omega$.

$(ii)$ There is an order-preserving $\mathbb{Z}[\mathbb{Z}^k]$-module isomorphism between $K_0^{\gr}(\KP_\mathsf{F}(\Lambda))$ and $K_0^{\gr}(\KP_\mathsf{F}(\Omega))$.

Then $\KP_\mathsf{F}(\Lambda)$ is locally finite if and only if $\KP_\mathsf{F}(\Omega)$ is locally finite.
\end{cor}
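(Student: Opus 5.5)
The plan is to derive Corollary~\ref{cor distinguishing locally finite KPAs} directly from the equivalence $(i)\Leftrightarrow(iii)$ of Theorem~\ref{th LOcally finite KPA of pullback}. Condition $(iii)$ is phrased purely in terms of the $\mathbb{Z}^k$-monoid structure of $T_\Lambda$, so it should be invariant under $\mathbb{Z}^k$-monoid isomorphisms.

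\emph{The two hypotheses are equivalent.} I will use the fact recalled in Section~\ref{sec prelim}: $T_\Lambda\cong \mathcal{V}^{\gr}(\KP_\mathsf{F}(\Lambda))$ as $\mathbb{Z}^k$-monoids, $T_\Lambda$ is cancellative, and $T_\Lambda$ is the positive cone of the $\mathbb{Z}[\mathbb{Z}^k]$-module $K_0^{\gr}(\KP_\mathsf{F}(\Lambda))$. The module $K_0^{\gr}$ is the group completion of $T_\Lambda$, with the induced $\mathbb{Z}^k$-action.
\begin{itemize}
\item Given a $\mathbb{Z}^k$-monoid isomorphism $T_\Lambda\to T_\Omega$, it extends uniquely to a $\mathbb{Z}^k$-equivariant group isomorphism of the Grothendieck groups. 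Equivariance gives $\mathbb{Z}[\mathbb{Z}^k]$-linearity, and the extension maps positive cone onto positive cone, so it is order-preserving in both directions.
\item Conversely, an order-preserving $\mathbb{Z}[\mathbb{Z}^k]$-module isomorphism whose inverse is also order-preserving restricts to a bijection of positive cones. This bijection is additive and commutes with the action of the elements of $\mathbb{Z}^k\subseteq\mathbb{Z}[\mathbb{Z}^k]$.
\end{itemize}
The only subtlety is interpreting ``order-preserving isomorphism'' as an order-isomorphism. That is the standard meaning, and I will state it explicitly.

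\emph{The main implication.} Let $\theta:T_\Lambda\to T_\Omega$ be a $\mathbb{Z}^k$-monoid isomorphism, and suppose $\KP_\mathsf{F}(\Lambda)$ is locally finite.
\begin{enumerate}
\item By Theorem~\ref{th LOcally finite KPA of pullback} (which applies since $|\Lambda^0|<\infty$), every $0\neq a\in T_\Lambda$ and every $i$ admit some $n_i\ge1$ with ${}^{n_i\mathbf{e}_i}a=a$.
\item Take $0\neq b\in T_\Omega$ and set $a=\theta^{-1}(b)$. Then $a\neq0$, since $\theta^{-1}$ is a monoid isomorphism and so sends $0$ only to $0$.
\item Equivariance gives ${}^{n_i\mathbf{e}_i}b=\theta({}^{n_i\mathbf{e}_i}a)=\theta(a)=b$.
\item Hence $T_\Omega$ satisfies condition $(iii)$. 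Since $|\Omega^0|<\infty$, Theorem~\ref{th LOcally finite KPA of pullback} gives that $\KP_\mathsf{F}(\Omega)$ is locally finite.
\end{enumerate}
Applying the same argument to $\theta^{-1}$ gives the converse implication.

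I expect no real obstacle. The only points needing care are that $\theta$ preserves nonzero-ness and commutes with the action, both of which are immediate. The finiteness of the vertex sets is needed only so that Theorem~\ref{th LOcally finite KPA of pullback} applies on each side.
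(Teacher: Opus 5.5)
Your proposal is correct and follows the paper's argument. The paper states the corollary as an immediate consequence of the equivalence $(i)\Leftrightarrow(iii)$ in Theorem~\ref{th LOcally finite KPA of pullback}, and you transport condition $(iii)$ along the $\mathbb{Z}^k$-equivariant isomorphism in the same way, using the finiteness of $\Lambda^0$ and $\Omega^0$ exactly to invoke that theorem on each side.
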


\subsection{Crossed product Kumjian--Pask algebras}\label{ssec crosspdt} In \cite{Vas}, the first author and Vas characterized finite directed graphs for which the corresponding $\mathbb{Z}$-graded Leavitt path algebras are crossed products. Our next aim is to obtain a similar kind of geometric criteria for crossed product Kumjian--Pask algebras of $k$-graphs. It is helpful to record the following theorem which says that one can distinguish the class of crossed product Kumjian--Pask algebras via the graded $K$-theory or equivalently, the talented monoid.

\begin{prop}\label{pro TM distinguishing crosspdt KPAs}
Suppose $\Lambda$ is a row-finite $k$-graph with no sources and $|\Lambda^0|<\infty$. Then $\KP_\mathsf{F}(\Lambda)$ is a crossed product (as a $\mathbb{Z}^k$-graded ring) if and only if $^\N \epsilon_\Lambda=\epsilon_\Lambda$ for all $\N\in \mathbb{Z}^k$. Consequently, if $\Lambda$ and $\Omega$ are row-finite $k$-graphs with no sources such that $|\Lambda^0|,|\Omega^0|<\infty$ and any one of the following equivalent conditions hold:

$(i)$ there is a $\mathbb{Z}^k$-monoid isomorphism $T_\Lambda\longrightarrow T_\Omega$ mapping $\epsilon_\Lambda$ to $\epsilon_\Omega$,

$(ii)$ there is an order-preserving $\mathbb{Z}[\mathbb{Z}^k]$-module isomorphism $K_0^{\gr}(\KP_\mathsf{F}(\Lambda))\longrightarrow K_0^{\gr}(\KP_\mathsf{F}(\Omega))$ mapping $[\KP_\mathsf{F}(\Lambda)]$ to $[\KP_\mathsf{F}(\Omega)]$;

then $\KP_\mathsf{F}(\Lambda)$ is a crossed product if and only if $\KP_\mathsf{F}(\Omega)$ is a crossed product. 
\end{prop}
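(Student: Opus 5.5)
The plan is to reduce the crossed product property to a statement about graded modules, and then translate it into the talented monoid through the $\mathbb{Z}^k$-isomorphism $T_\Lambda\cong \mathcal{V}^{\gr}(\KP_\mathsf{F}(\Lambda))$, $v(\N)\mapsto [\KP_\mathsf{F}(\Lambda)p_v(\N)]$. Write $A=\KP_\mathsf{F}(\Lambda)$. Since $|\Lambda^0|<\infty$, $A$ is unital with $1=\sum_{v}p_v$, so $\epsilon_\Lambda$ corresponds to $[A]$ and $^\N\epsilon_\Lambda$ corresponds to $[A(\N)]$. The standard fact from graded ring theory (see \cite{haz2013}) is that a $\Gamma$-graded unital ring $A$ has an invertible element in $A_\gamma$ if and only if $A(\gamma)\cong_{\gr} A$ as graded right (or left) $A$-modules. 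I will reprove it in a line.

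First I show that a graded isomorphism gives a unit. A graded isomorphism $\phi:A\to A(\gamma)$ is left multiplication by $u=\phi(1)\in A(\gamma)_0=A_\gamma$ (degree conventions to be fixed consistently with the shift used in \cite{HMPS1}). Its inverse is multiplication by $w=\phi^{-1}(1)\in A_{-\gamma}$. Then $uw=wu=1$, so $u$ is a homogeneous unit of degree $\gamma$.

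Next the converse: a unit $u\in A_\gamma$ defines the graded isomorphism $x\mapsto ux$. Thus $A$ is a crossed product if and only if $[A(\N)]=[A]$ in $\mathcal{V}^{\gr}(A)$ for all $\N\in\mathbb{Z}^k$. Under the isomorphism of \cite[Proposition 3.14]{HMPS1}, this says exactly that $^\N\epsilon_\Lambda=\epsilon_\Lambda$ for all $\N$. The main care point is the sign convention: whether $^\N$ corresponds to $A(\N)$ or $A(-\N)$. Since the condition is required for all $\N$, this is harmless.

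For the consequence, the isomorphism in (i) sends $\epsilon_\Lambda$ to $\epsilon_\Omega$ and commutes with the action. Hence $^\N\epsilon_\Lambda=\epsilon_\Lambda$ for all $\N$ if and only if $^\N\epsilon_\Omega=\epsilon_\Omega$ for all $\N$, and the first part gives the claim.

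The equivalence of (i) and (ii) follows from two facts:
\begin{itemize}
\item $T_\Lambda$ is cancellative and is the positive cone of $K_0^{\gr}$, which is its group completion with the induced $\mathbb{Z}[\mathbb{Z}^k]$-module structure.
\item $[A]$ corresponds to $\epsilon_\Lambda$.
\end{itemize}
A $\mathbb{Z}^k$-monoid isomorphism therefore extends uniquely to an order-preserving $\mathbb{Z}[\mathbb{Z}^k]$-module isomorphism. Conversely, such a module isomorphism restricts to the positive cones, and its inverse is order-preserving too, provided "order-preserving isomorphism" is read as an order isomorphism.
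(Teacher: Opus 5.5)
Your proposal is correct and follows essentially the same route as the paper. Both arguments reduce the crossed-product property to $\KP_\mathsf{F}(\Lambda)\cong_{\gr}\KP_\mathsf{F}(\Lambda)(\N)$ for all $\N\in\mathbb{Z}^k$ (the paper cites this from a standard graded ring theory reference, whereas you reprove it), and then transport it through the $\mathbb{Z}^k$-isomorphism $T_\Lambda\cong\mathcal{V}^{\gr}(\KP_\mathsf{F}(\Lambda))$, under which $[\KP_\mathsf{F}(\Lambda)]$ corresponds to $\epsilon_\Lambda$; your extra argument for the equivalence of (i) and (ii), including the caveat that ``order-preserving isomorphism'' must be read as an order isomorphism, fills in a point the paper leaves implicit.
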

\begin{proof}
In view of \cite[Theorem 2.10.2]{NObook}, $\KP_\mathsf{F}(\Lambda)$ is a crossed product if and only if $\KP_\mathsf{F}(\Lambda)\cong_{\gr}\KP_\mathsf{F}(\Lambda)(\N)$ for all $\N\in \mathbb{Z}^k$, where $\KP_\mathsf{F}(\Lambda)$ is viewed as the regular graded left module over itself. This is equivalent to saying that in $\mathcal{V}^{\gr}(\KP_\mathsf{F}(\Lambda))$, $^\N[\KP_\mathsf{F}(\Lambda)]=[\KP_\mathsf{F}(\Lambda)]$ for all $\N\in \mathbb{Z}^k$. Since $[\KP_\mathsf{F}(\Lambda)]$ corresponds to $\epsilon_\Lambda$ under the $\mathbb{Z}^k$-monoid isomorphism $\mathcal{V}^{\gr}(\KP_\mathsf{F}(\Lambda))\longrightarrow T_\Lambda$, the first part follows. The final statement is now an immediate consequence of the first part.  
\end{proof}

The above proposition, together with Proposition \ref{pro TM under pullbacks}, yields the following:
\begin{cor}\label{cor pullback preserves crosspdt}
Let $\Lambda$ be a row-finite $\ell$-graph without sources, $f:\mathbb{N}^k\longrightarrow \mathbb{N}^\ell$ a surjective homomorphism. Then $\KP_\mathsf{F}(\Lambda)$ is a crossed product if and only if $\KP_\mathsf{F}(f^*(\Lambda))$ is a crossed product.   
\end{cor}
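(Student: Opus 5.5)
The plan is to deduce the corollary from Proposition \ref{pro TM distinguishing crosspdt KPAs} by transporting the order-unit condition across the isomorphism $\rho:T_{f^*(\Lambda)}\longrightarrow T_\Lambda$ of Proposition \ref{pro TM under pullbacks}.

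\emph{Reduction.} Proposition \ref{pro TM distinguishing crosspdt KPAs} requires finitely many vertices, so I first assume $|\Lambda^0|<\infty$. The vertex set of $f^*(\Lambda)$ consists of the pairs $(v,0)$ with $v\in\Lambda^0$. Hence $|f^*(\Lambda)^0|=|\Lambda^0|$. Moreover, $f^*(\Lambda)$ is row-finite without sources because $\Lambda$ is: for $v\in\Lambda^0$ and $\N\in\mathbb{N}^k$, the set $(v,0)f^*(\Lambda)^\N$ is in bijection with $v\Lambda^{f(\N)}$. The proposition then says that $\KP_\mathsf{F}(\Lambda)$ is a crossed product if and only if $^{\bP}\epsilon_\Lambda=\epsilon_\Lambda$ for all $\bP\in\mathbb{Z}^\ell$. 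Likewise, $\KP_\mathsf{F}(f^*(\Lambda))$ is a crossed product if and only if $^{\N}\epsilon_{f^*(\Lambda)}=\epsilon_{f^*(\Lambda)}$ for all $\N\in\mathbb{Z}^k$. (If infinitely many vertices are allowed, I would instead argue directly with unital gradings, but I expect the intended setting is finite-vertex.)

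\emph{Transport of the order unit.} By the definition of $\rho$ in the proof of Proposition \ref{pro TM under pullbacks}, $\rho(v(0))=v(f(0))=v(0)$. Therefore
\[\rho(\epsilon_{f^*(\Lambda)})=\sum_{v\in\Lambda^0}v(0)=\epsilon_\Lambda.\]
Equivariance gives $\rho(^{\N}x)=~^{f(\N)}\rho(x)$, and $\rho$ is bijective because $f$ is surjective. Consequently, for each $\N\in\mathbb{Z}^k$, $^{\N}\epsilon_{f^*(\Lambda)}=\epsilon_{f^*(\Lambda)}$ holds if and only if $^{f(\N)}\epsilon_\Lambda=\epsilon_\Lambda$.

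\emph{Matching the quantifiers.} Surjectivity of $f:\mathbb{N}^k\to\mathbb{N}^\ell$ implies that its extension $f:\mathbb{Z}^k\to\mathbb{Z}^\ell$ is surjective. So as $\N$ runs over $\mathbb{Z}^k$, $f(\N)$ runs over all of $\mathbb{Z}^\ell$. The condition "$^{f(\N)}\epsilon_\Lambda=\epsilon_\Lambda$ for all $\N\in\mathbb{Z}^k$" is therefore equivalent to "$^{\bP}\epsilon_\Lambda=\epsilon_\Lambda$ for all $\bP\in\mathbb{Z}^\ell$". Combined with the two characterizations from the reduction, this gives the stated equivalence.

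The only step needing care is checking that $\rho$ sends $\epsilon_{f^*(\Lambda)}$ to $\epsilon_\Lambda$, that is, identifying the vertices of the pullback with those of $\Lambda$. This is immediate from the fibered-product description of $f^*(\Lambda)$.
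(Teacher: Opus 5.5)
Your proposal is correct and follows essentially the same route as the paper. Both arguments transport $\epsilon_{f^*(\Lambda)}\mapsto\epsilon_\Lambda$ along $\rho$ and use equivariance, with injectivity of $\rho$ for one direction and surjectivity of $f$ (extended to $\mathbb{Z}^k\to\mathbb{Z}^\ell$) for the other.

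You were right to flag that Proposition~\ref{pro TM distinguishing crosspdt KPAs} needs $|\Lambda^0|<\infty$, which the paper uses silently. The infinite-vertex case is actually trivial: the two graphs share a vertex set, so both algebras are non-unital and neither can be a crossed product.
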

\begin{proof}
Since $\Lambda^0$ and $f^*(\Lambda)^0$ are essentially the same, the isomorphism $\rho$ of Proposition \ref{pro TM under pullbacks} maps $\epsilon_{f^*(\Lambda)}$ to $\epsilon_\Lambda$. Suppose $\KP_\mathsf{F}(\Lambda)$ is a crossed product. Let $\N\in \mathbb{Z}^k$. Then $\rho(^\N \epsilon_{f^*(\Lambda)})=~^{f(\N)}\epsilon_\Lambda=\epsilon_\Lambda=\rho(\epsilon_{f^*(\lambda)})$, and consequently, $^\N \epsilon_{f^*(\Lambda)}=\epsilon_{f^*(\Lambda)}$ since $\rho$ is an injective. It follows that $\KP_\mathsf{F}(f^*(\Lambda))$ is a crossed product by Proposition \ref{pro TM distinguishing crosspdt KPAs}. The converse is similar using the surjectivity of $f$.    
\end{proof}
We now justify the use of Proposition \ref{pro TM distinguishing crosspdt KPAs} and Corollary \ref{cor pullback preserves crosspdt} via the following example. 

\begin{example}\label{ex justifying the sufficient condition} Below are the $1$-skeletons of two $2$-graphs $\Lambda$ (on the left) and $\Omega$ (on the right).

\begin{center}
\begin{tikzpicture}[
    >=Stealth,
    node distance=3.5cm,
    vertex/.style={circle, fill=black, inner sep=2pt},
    blue edge/.style={draw=blue, ->, thick},
    red edge/.style={draw=red, ->, thick}
]

    \begin{scope}[local bounding box=leftgraph]
        \node[vertex] (w1) at (0,3) {};
        \node[vertex] (u1) at (-1.8,0) {};
        \node[vertex] (v1) at (1.8,0) {};
        
        \node[above=2pt of w1] {$w$};
        \node[below left=2pt of u1] {$u$};
        \node[below right=2pt of v1] {$v$};
        
        \path[blue edge] (w1) edge[out=135, in=195, looseness=8, distance=1.5cm] node[left, black] {$a_3$} (w1);
        \path[blue edge] (u1) edge[bend left=25] node[above, black] {$a_1$} (v1);
        \path[blue edge] (v1) edge[bend left=25] node[below, black] {$a_2$} (u1);
        
        \path[red edge, dashed] (w1) edge[out=45, in=-15, looseness=8, distance=1.5cm] node[right, black] {$b_3$} (w1);
        \path[red edge, dashed] (w1) edge node[left, black, pos=0.4] {$b_1$} (u1);
        \path[red edge, dashed] (w1) edge node[right, black, pos=0.4] {$b_2$} (v1);
    \end{scope}

    \begin{scope}[shift={(6.5,0)}, local bounding box=rightgraph]
        \node[vertex] (w2) at (0,3) {};
        \node[vertex] (u2) at (-1.8,0) {};
        \node[vertex] (v2) at (1.8,0) {};
        
        \node[below=4pt of w2] {$w$};
        \node[below =2pt of u2] {$u$};
        \node[below =2pt of v2] {$v$};
        
        \path[blue edge] (u2) edge node[left, black, pos=0.5] {$a_1$} (w2);
        \path[blue edge] (v2) edge node[right, black, pos=0.5] {$a_2$} (w2);
        \path[blue edge] (u2) edge[bend left=25] node[above, black] {$a_3$} (v2);
        \path[blue edge] (v2) edge[bend left=25] node[below, black] {$a_4$} (u2);
        
        \path[red edge, dashed] (w2) edge[out=45, in=135, looseness=8, distance=1.5cm] node[above, black] {$b_1$} (w2);
        \path[red edge, dashed] (u2) edge[out=135, in=225, looseness=8, distance=1.5cm] node[left, black] {$b_2$} (u2);
        \path[red edge, dashed] (v2) edge[out=45, in=-45, looseness=8, distance=1.5cm] node[right, black] {$b_3$} (v2);
    \end{scope}
\end{tikzpicture}
\end{center}
Blue edges have degree $\mathbf{e}_1$ and dashed red edges have degree $\mathbf{e}_2$. The factorizations of bicolored paths are uniquely determined in both cases. In $T_\Lambda$, $u=v=w=w(\mathbf{e}_2)$ and $u=v(\mathbf{e}_1)$, $v=u(\mathbf{e}_1)$, $w=w(\mathbf{e}_1)$. It follows that $T_\Lambda\cong \mathbb{N}$ as $\mathbb{Z}^2$-monoids, where the $\mathbb{Z}^2$ actions are trivial. Therefore $\KP_\mathsf{F}(\Lambda)$ is a crossed product by the first part of Proposition \ref{pro TM distinguishing crosspdt KPAs}. Note that the $2$-graph $\Omega$ is the pullback of the $C_2$-comet graph 
\begin{center}
\begin{tikzpicture}[
    >=Stealth, 
    vertex/.style={circle, fill=black, inner sep=2pt}, 
    edge/.style={->, thick, draw=black}
]

    \node[vertex, label=above:{$w$}] (w) at (0, 2) {};
    \node[vertex, label=left:{$u$}] (u) at (-1.5, 0) {};
    \node[vertex, label=right:{$v$}] (v) at (1.5, 0) {};

    \path[edge] (w) -- (u); 
    \path[edge] (w) -- (v); 
    
    \path[edge] (u) edge[bend left=20] (v); 
    \path[edge] (v) edge[bend left=20] (u);

\node at (-2.5,1) {$E\equiv$};

\end{tikzpicture}
\end{center}
under the homomorphism $f:\mathbb{N}^2\longrightarrow \mathbb{N}$ defined by $f((x,y))=x$. Since this graph has no exits and satisfies Condition (EDL) of \cite{Vas}, $L_\mathsf{F}(E)$ is a crossed product. Therefore, $\KP_\mathsf{F}(\Omega)$ is also a crossed product in view of Corollary \ref{cor pullback preserves crosspdt}. Note that we cannot arrive at this conclusion using the final part of Proposition \ref{pro TM distinguishing crosspdt KPAs} since the talented monoids $T_\Lambda$ and $T_\Omega$ are not isomorphic: $T_\Lambda\cong \mathbb{N}$ while $T_\Omega\cong \mathbb{N}\oplus \mathbb{N}$.  
\end{example}

Note that both the $2$-graphs $\Lambda$ and $\Omega$ have Condition (HNE). This is not a mere coincidence. The following lemma, together with Proposition \ref{pro TM distinguishing crosspdt KPAs}, will help us proving that the satiation of Condition (HNE) is a necessary condition for the Kumjian--Pask algebra to be a crossed product.

\begin{lem}\label{lem downset of periodic element}
Suppose $\Lambda$ is a row-finite $k$-graph without sources and $0\neq a\in T_\Lambda$ is such that $^\M a=a$ for some $\M\in \mathbb{N}^k\setminus \{0\}$. Then for any $b\in T_\Lambda$ with $0\neq b\le a$, there exists $t\in \mathbb{N}\setminus \{0\}$ such that $^{t\M} b=b$. 
\end{lem}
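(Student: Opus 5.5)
We plan to reduce the statement to a finiteness argument inside the down-set of $a$, exploiting that $T_\Lambda$ is cancellative and that the algebraic pre-order on $T_\Lambda$ is antisymmetric (both recorded earlier). Since $0\neq b\le a$, write $a=b+c$ with $c\in T_\Lambda$. Applying $^{\M}$ repeatedly and using $^{j\M}a=a$ gives $a={}^{j\M}b+{}^{j\M}c$, so every translate $^{j\M}b$, $j\in\mathbb{N}$, lies in the down-set $\{x\in T_\Lambda~|~x\le a\}$. The key step is to show that this down-set (or at least the orbit $\{{}^{j\M}b~|~j\in\mathbb{N}\}$) is finite. Once this is known, pigeonhole yields $0\le i<j$ with $^{i\M}b={}^{j\M}b$, and applying $^{-i\M}$ gives $^{t\M}b=b$ with $t=j-i\in\mathbb{N}\setminus\{0\}$.

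To prove finiteness, write $a=\sum_{s=1}^{l}v_s(\N)$ with $\N\in\mathbb{N}^k$ (possible since $\Lambda$ has no sources), and pass to $M_{\overline{\Lambda}}$ via the isomorphism $\phi$ of \cite[Theorem 3.16]{HMPS1}. If $x\le a$, then $a=x+y$, and the confluence lemma \cite[Lemma 3.5]{HMPS1} together with \cite[Lemma 3.1]{HMPS1} produces $\gamma$ with $\phi(a)\longrightarrow\gamma=\gamma_1+\gamma_2$, where $\phi(x)\longrightarrow\gamma_1$. Flowing further, as in the proofs of Proposition~\ref{pro neccesary condition for loop with entrance} and Theorem~\ref{th characterizing loops via TM}$(ii)$, we may take $\gamma$ concentrated in a single state $\Q$. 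Then $\gamma=\sum_s\sum_{\alpha\in (v_s,\N)\overline{\Lambda}^{\Q-\N}}\overline{s}(\alpha)$, so $x$ equals the class of a sub-multiset of the full expansion of $a$ at level $\Q$.

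The obstacle is that $\Q$ depends on $x$, so this alone does not give a uniform bound. Here periodicity must be used. Since $^{\M}a=a$, the same flow argument as in Proposition~\ref{pro neccesary condition for loop with entrance} (now with the roles of $\N$ and $-\M$) shows that the expansion of $a$ at level $\Q+\M$ has exactly as many summands as at level $\Q$. Hence, along the cofinal chain of levels $\Q_0+j\M$, the number of summands of the expansion is a constant $p$, and the passage from level $\Q_0+j\M$ to level $\Q_0+(j+1)\M$ is a bijection of summands. Thus every vertex in these expansions has a unique path of degree $\M$, and hence of degree $j\M$ for all $j$. Given $x\le a$ with associated level $\Q$, we choose $j$ with $\Q_0+j\M\ge\Q$ and flow $\gamma_1$ to level $\Q_0+j\M$. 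The result is still a sub-multiset of the $p$-element expansion at that level, and since each summand branches uniquely, the sub-multiset is determined by a subset of $\{1,\dots,p\}$ at level $\Q_0$ (transporting back by the bijections, up to the shift by $j\M$).

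Consequently each $x\le a$ is of the form $^{j\M}$ (a subsum of the level-$\Q_0$ expansion, pulled back). Combined with $^{\M}$-periodicity of the level structure, this leaves at most $2^p$ possible values in $T_\Lambda$, so the down-set is finite and the pigeonhole step concludes the proof. The delicate point I expect to need care is making the ``unique branching along multiples of $\M$'' rigorous. I would first try to derive it directly from the summand count via the equality $a={}^{\M}a$ in the free monoid $\mathbb{F}_{\overline{\Lambda}}$ after confluence, mimicking equation~(\ref{eq transame}).
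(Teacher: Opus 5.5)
Your plan breaks down at the step ``along the cofinal chain of levels $\Q_0+j\M$''. That chain is cofinal in $\mathbb{N}^k$ only when every coordinate of $\M$ is positive. The lemma, however, allows any $\M\in\mathbb{N}^k\setminus\{0\}$, and it is used in Corollary~\ref{cor HNE is necessary to be crosspdt} with $\M=\mathbf{e}_i$. If $\M$ has a zero coordinate, the level $\Q$ attached to some $x\le a$ can exceed every $\Q_0+j\M$ in that coordinate, so you cannot choose $j$ with $\Q_0+j\M\ge\Q$.

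The uniform approach also cannot be patched, because the intermediate claim that $\{x\in T_\Lambda~|~x\le a\}$ is finite is false. In Example~\ref{ex motivating example} take $a=v$ and $\M=\mathbf{e}_2$, so $^{\M}a=a$. The relation $v=2^n v(n\mathbf{e}_1)$ gives $v(n\mathbf{e}_1)\le v$ for every $n$. These elements are pairwise distinct, since they correspond to $2^{-n}$ under $T_\Lambda\cong\mathbb{N}[1/2]$. So the down-set is infinite, and no bound such as $2^p$ can hold, whatever starting level $\Q_0$ you use.

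The repair is to give up a bound that is uniform in $x$ and instead work at a level adapted to the given $b$. Write $E(\mathbf{r})$ for the vertex multiset of the full expansion of $a$ at level $\mathbf{r}$.
\begin{itemize}
\item The confluence/counting argument you sketch (mimicking (\ref{eq transame})) gives $E(\Q_0)=E(\Q_0+\M)$ as multisets.
\item This propagates to $E(\mathbf{r})=E(\mathbf{r}+\M)$ for every $\mathbf{r}\ge\Q_0$, not just for $\mathbf{r}\in\Q_0+\mathbb{N}\M$: expand both sides by degree $\mathbf{r}-\Q_0$.
\item Equality of cardinalities then forces $|w\Lambda^{\M}|=1$ for every $w$ in $E(\mathbf{r})$. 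The map sending $w$ to the source of its unique path of degree $\M$ permutes the vertices occurring there.
\item Take $\mathbf{r}\ge\Q\vee\Q_0$, where $\Q$ is the level obtained from $a=b+c$. Then $b$ is a subsum $\sum_{j\in S}u_j(\mathbf{r})$ of $E(\mathbf{r})$.
\item With $t$ the order of that permutation, $^{t\M}u_j=u_j$ for all $j$, hence $^{t\M}b=b$. Equivalently, the \emph{orbit} of $b$ (not its down-set) is finite, and your pigeonhole step then applies.
\end{itemize}

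This is exactly the paper's proof. It chooses a single confluence element $\gamma$, lying in one state $\Q$, to which $b+c$, $a$ and $^{\M}a$ all flow. It then runs the argument of Proposition~\ref{pro neccesary condition for loop with entrance} with $\N=-\M$ at that level to get $t$ with $^{t\M}u_j=u_j$ for every summand of $\gamma$. Finally it reads off $b$ as a subsum of $\gamma$.
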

\begin{proof}
Let $b\in T_\Lambda$ be such that $b\le a$. Then $a=b+c$ for some $c\in T_\Lambda$. Moving to $M_{\overline{\Lambda}}$ and using the Confluence lemma, we have a $\eta\in \mathbb{F}_{\overline{\Lambda}}\setminus \{0\}$ such that $a\longrightarrow \eta$ and $b+c\longrightarrow \eta$. Again since $^\M a=a$, we have a $\delta\in \mathbb{F}_{\overline{\Lambda}}\setminus \{0\}$ such that $a\longrightarrow \delta$ and $^\M a\longrightarrow \delta$. Note that we are denoting elements of $T_\Lambda$ and their images in the $k$-graph monoid $M_{\overline{\Lambda}}$ by the same letters. Since $M_{\overline{\Lambda}}=\mathbb{F}_{\overline{\Lambda}}/\sim$, where $\sim$ is the congruence generated by $\longrightarrow$, it follows that $\eta=\delta$ in $M_{\overline{\Lambda}}$, and so there is a $\gamma\in \mathbb{F}_{\overline{\Lambda}}\setminus \{0\}$ such that $\eta,\delta\longrightarrow \gamma$. Then $b+c,a,~^\M a\longrightarrow \gamma$. We can write \[\gamma=\displaystyle{\sum_{j=1}^{p}} (u_j,\mathbf{q})\] for some vertices $u_j\in \Lambda^0$ and a $\Q\in \mathbb{N}^k$. Now we run the proof of Proposition \ref{pro neccesary condition for loop with entrance} with $\N=-\M$. By doing this, we obtain a fixed positive integer $t$ such that for each $j=1,2,\ldots,p$, there is a nontrivial loop $\mu_j\in u_j\Lambda^{t\M}$ which has no entrance, whence $^{t\M}u_j=u_j$ in $T_\Lambda$. Since $b+c\longrightarrow \gamma$, by \cite[Lemma 3.1]{HMPS1}, there exists $\gamma_1,\gamma_2\in \mathbb{F}_{\overline{\Lambda}}$ such that $\gamma_1\neq 0$, $b\longrightarrow \gamma_1$, $c\longrightarrow \gamma_2$ and $\gamma=\gamma_1+\gamma_2$. Then there exists $S\subseteq \{1,2,\ldots,p\}$ such that $\gamma_1=\displaystyle{\sum_{j\in S}} (u_j,\Q)$. This gives $b=\displaystyle{\sum_{j\in S}} u_{j}(\Q)$. Now, \[^{t\M}b=\displaystyle{\sum_{j\in S}}~^{t\M}u_{j}(\Q)=\displaystyle{\sum_{j\in S}} u_{j}(\Q)=b,\] which completes the proof.  
\end{proof}
\begin{cor}\label{cor HNE is necessary to be crosspdt}
Let $\Lambda$ be a row-finite $k$-graph without sources such that $|\Lambda^0|< \infty$. If $\KP_\mathsf{F}(\Lambda)$ is a crossed product, then $\Lambda$ satisfies Condition \textup{(HNE)}.
\end{cor}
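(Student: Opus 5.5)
We argue via the talented monoid, combining Proposition \ref{pro TM distinguishing crosspdt KPAs}, Lemma \ref{lem downset of periodic element} and Corollary \ref{cor characterizing HL and HNE via TM}$(ii)$. Assume $\KP_\mathsf{F}(\Lambda)$ is a crossed product. Since $|\Lambda^0|<\infty$, Proposition \ref{pro TM distinguishing crosspdt KPAs} gives $^\N\epsilon_\Lambda=\epsilon_\Lambda$ for every $\N\in\mathbb{Z}^k$, where $\epsilon_\Lambda=\sum_{v\in\Lambda^0}v(0)$. By Corollary \ref{cor characterizing HL and HNE via TM}$(ii)$, it then suffices to show that there are no $a\in T_\Lambda$ and $\Q\in\mathbb{N}^k$ with $^\Q a<a$.

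Suppose such $a$ and $\Q$ exist. Then $a\neq 0$, $\Q\neq 0$, and $a={}^\Q a+c$ with $c\neq 0$. Because $\Lambda$ has no sources, we may write $a=\sum_t v_t(\M)$ with $\M\in\mathbb{N}^k$. Each $v_t(\M)$ satisfies $v_t(\M)\le {}^\M\epsilon_\Lambda$ in the restricted sense that for a single vertex $v$ we have $v(\M)\le{}^\M\epsilon_\Lambda$. However, $a$ may involve repeated vertices, so $a$ itself need not lie below ${}^\M\epsilon_\Lambda$.

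To handle this, we use a multiple instead. We have $a\le N\,{}^\M\epsilon_\Lambda$ with $N=$ the number of summands. Also $N\,{}^\M\epsilon_\Lambda=N\epsilon_\Lambda$ is nonzero and fixed by ${}^\Q$ (take $\Q\neq0$ in $\mathbb{N}^k$). Lemma \ref{lem downset of periodic element}, applied to the periodic element $N\epsilon_\Lambda$ and the element $0\neq a\le N\epsilon_\Lambda$, yields $s\in\mathbb{N}\setminus\{0\}$ with $^{s\Q}a=a$.

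Iterating $a={}^\Q a+c$ gives $a={}^{s\Q}a+\sum_{i=0}^{s-1}{}^{i\Q}c=a+c'$ with $c'\neq 0$, since each ${}^{i\Q}c\neq0$ and $T_\Lambda$ is conical. Cancellativity of $T_\Lambda$ (it is the positive cone of $K_0^{\gr}$) then forces $c'=0$, a contradiction. Hence no such $a,\Q$ exist, and $\Lambda$ has Condition (HNE).

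The only delicate point is verifying that $a$ sits below some periodic element, which is why we bound it by $N\epsilon_\Lambda$ rather than by $\epsilon_\Lambda$. We also need that $N\epsilon_\Lambda$ is genuinely fixed by the nonzero vector $\Q\in\mathbb{N}^k$ so that Lemma \ref{lem downset of periodic element} applies. Both follow directly from the crossed product hypothesis. The remaining steps are routine uses of conicality and cancellation.
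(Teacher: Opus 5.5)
Your proof is correct and follows essentially the paper's route. You use Proposition \ref{pro TM distinguishing crosspdt KPAs} to get $\epsilon_\Lambda$ fixed by all of $\mathbb{Z}^k$, bound the element by a multiple of $\epsilon_\Lambda$, and apply Lemma \ref{lem downset of periodic element}. The only difference is that the paper applies the lemma in each direction $\mathbf{e}_i$ and then cites the final part of Corollary \ref{cor characterizing HL and HNE via TM}, while you apply it along $\Q$ and inline the iteration/cancellation contradiction (the argument inside that final part) against Corollary \ref{cor characterizing HL and HNE via TM}$(ii)$.
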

\begin{proof}
Suppose $\KP_\mathsf{F}(\Lambda)$ is a crossed product. By Proposition \ref{pro TM distinguishing crosspdt KPAs}, $^\N \epsilon_\Lambda=\epsilon_\Lambda$ for all $\N\in \mathbb{Z}^k$. Let $0\neq a\in T_\Lambda$ and $i\in \{1,2,\ldots,k\}$. We claim that there exists $t\in \mathbb{N}\setminus \{0\}$ such that $^{t\mathbf{e}_i}a=a$. Since $\epsilon_\Lambda$ is an order-unit of $T_\Lambda$ and $\epsilon$ is a global fixed point under the $\mathbb{Z}^k$-action, there exists a positive integer $\ell$ such that $a\le \ell \epsilon_\Lambda$. The claim is now established by noting that $^{\mathbf{e}_i}(\ell\epsilon_\Lambda)=\ell(~^{\mathbf{e}_i}\epsilon_\Lambda)=\ell \epsilon_\Lambda$ and using Lemma \ref{lem downset of periodic element}. Therefore, $\Lambda$ has Condition (HNE) in view of Corollary \ref{cor characterizing HL and HNE via TM}.  
\end{proof}
\begin{rmk}\label{rem HNE is not sufficient for crosspdt}
Although (HNE) is a necessary condition to be a crossed product, it is not sufficient. One can find many counterexamples. For example, consider a directed graph $E$ with no sinks, with Condition (NE) and without the Condition (EDL) of \cite{Vas}. If $\Lambda$ is the pullback $k$-graph of $E$ via a surjective homomorphism $f:\mathbb{N}^k\longrightarrow \mathbb{N}$, then $\Lambda$ has Condition (HNE). But since $L_\mathsf{F}(E)$ is not a crossed product (see \cite[Theorem 3.1]{Vas}), $\KP_\mathsf{F}(\Lambda)$ is also not a crossed product by Corollary \ref{cor pullback preserves crosspdt}. For a particular example of such a $2$-graph see Example \ref{ex HEDL cannot be removed} below. 
\end{rmk}

We show that although, Condition (HNE) alone is not sufficient for the Kumjian--Pask algebra to be a crossed product, it becomes so under a specific geometric property of a $k$-graph, which we define next.

\begin{dfn}\label{def Condition HEDL}
Suppose $\Lambda$ is a row-finite $k$-graph without sources. For each $i=1,2,\ldots,k$, suppose $\mathcal{C}_i$ denotes the set of unicolor cycles of $\Lambda$ in degree $\mathbf{e}_i$. For $\mu\in \mathcal{C}_i$ and $0\le t\le |\mu|-1$, suppose $\mathcal{O}_t^\mu$ denotes the set of all paths $\lambda\in \Lambda s(\mu)$ such that $\lambda$ is unicolor in degree $\mathbf{e}_i$, $|\lambda|\equiv t ~(\text{mod}~|\mu|)$ and $\lambda(\N,\N+d(\mu))\neq \mu$ for all $0\le \N\le d(\lambda)-d(\mu)$ whenever $d(\lambda)\ge d(\mu)$. We say that the $k$-graph $\Lambda$ has Condition (HEDL) if for each $i=1,2,\ldots,k$ and any $\mu\in \mathcal{C}_i$, there exists a positive integer $\ell^\mu$ such that $|\mathcal{O}_t^\mu|=\ell^\mu$ for all $t=0,1,\ldots,|\mu|-1$.      
\end{dfn}

The following lemma will serve our purpose.
\begin{lem}\label{lem vertex representation as sum cycle elements}
Let $\Lambda$ be a row-finite $k$-graph without sources such that $|\Lambda^0|<\infty$ and $\Lambda$ has Condition \textup{(HNE)}. Let $v\in \Lambda^0$. Then for each $i=1,2,\ldots,k$, \[v(0)=\displaystyle{\sum_{\mu\in \mathcal{C}_i}}\left(\displaystyle{\sum_{t=0}^{|\mu|-1}}j_t^{\mu,v}s(\mu)(t\mathbf{e}_i)\right)\] in $T_\Lambda$, where $j_t^{\mu,v}=|v\Lambda \cap \mathcal{O}_t^\mu|$, $\mathcal{C}_i$ and $\mathcal{O}_t^\mu$ are as described in Definition \textup{\ref{def Condition HEDL}}.  
\end{lem}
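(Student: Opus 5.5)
The plan is to work entirely inside the $\mathbf{e}_i$-colored $1$-skeleton of $\Lambda$. Fix $i$ and $v$. Regard the unicolor paths of degree in $\mathbb{N}\mathbf{e}_i$ as the paths of a finite directed graph $E_i$ with vertex set $\Lambda^0$ and edge set $\Lambda^{\mathbf{e}_i}$. Since $\Lambda$ has no sources, every vertex receives an edge. The defining relation of $T_\Lambda$ iterated in direction $\mathbf{e}_i$ gives, for every $N\in\mathbb{N}$,
\[v(0)=\sum_{\lambda\in v\Lambda^{N\mathbf{e}_i}} s(\lambda)(N\mathbf{e}_i).\]
I will choose $N\ge 2|\Lambda^0|$ and reorganize this sum according to the cycles in $\mathcal{C}_i$.

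The first step collects geometric consequences of (HNE). Let $\mu\in\mathcal{C}_i$ and let $w=\mu(s\mathbf{e}_i)$ be any vertex on $\mu$. Then $w\Lambda^{\mathbf{e}_i}$ consists of the single edge of $\mu$ leaving $w$ in the backward (expansion) direction. Otherwise some rotation of $\mu$, which is again a loop, would have an entrance of degree $\mathbf{e}_i$. Two consequences follow. First, distinct cycles in $\mathcal{C}_i$ are vertex-disjoint. Second, once a backward expansion from $v$ reaches a vertex of $\mu$, it is forced to run around $\mu$ forever. Moreover, since $|\Lambda^0|<\infty$, every $\lambda\in v\Lambda^{N\mathbf{e}_i}$ with $N\ge|\Lambda^0|$ repeats a vertex. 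The corresponding subpath is a loop in $E_i$, and it contains a unicolor cycle, so $\lambda$ meets some $\mu\in\mathcal{C}_i$. In particular, $\lambda$ passes through $s(\mu)$ within fewer than $|\Lambda^0|+|\mu|\le 2|\Lambda^0|$ steps.

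The second step is a bijection. Given $\lambda\in v\Lambda^{N\mathbf{e}_i}$, let $\lambda_0=\lambda(0,|\lambda_0|\mathbf{e}_i)$ be its shortest initial segment with source $s(\mu)$. I claim $\lambda_0\in v\Lambda\cap\mathcal{O}^\mu_{t}$ with $t\equiv|\lambda_0| \pmod{|\mu|}$. Indeed, if $\lambda_0$ contained $\mu$ as a subpath, it would already have passed through $s(\mu)$ earlier, contradicting minimality. Conversely, take any $\lambda_0\in v\Lambda\cap\mathcal{O}^\mu_t$ and any point where it passes $s(\mu)$ before its end. From there its continuation is forced along $\mu$, so returning to $s(\mu)$ would complete a copy of $\mu$. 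Hence $\lambda_0$ reaches $s(\mu)$ only at its end, so $|\lambda_0|<2|\Lambda^0|\le N$. It then extends uniquely to a path in $v\Lambda^{N\mathbf{e}_i}$, by the forced continuation around $\mu$. This gives a bijection between $v\Lambda^{N\mathbf{e}_i}$ and $\bigsqcup_{\mu\in\mathcal{C}_i}\bigsqcup_t (v\Lambda\cap\mathcal{O}^\mu_t)$.

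The third step transfers the terms. For $\lambda\leftrightarrow\lambda_0$, the relation at each vertex of $\mu$ has a single summand. Iterating it gives $s(\mu)(|\lambda_0|\mathbf{e}_i)=s(\lambda)(N\mathbf{e}_i)$. Summing over the bijection yields
\[v(0)=\sum_{\mu}\sum_{\lambda_0} s(\mu)(|\lambda_0|\mathbf{e}_i).\]
Finally, $s(\mu)\Lambda^{d(\mu)}=\{\mu\}$, exactly as in the proof of Proposition \ref{pro sufficient condition for loop with entrance}, so $s(\mu)(\M)=s(\mu)(\M+|\mu|\mathbf{e}_i)$ for all $\M\in\mathbb{Z}^k$. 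Hence $s(\mu)(|\lambda_0|\mathbf{e}_i)=s(\mu)(t\mathbf{e}_i)$, and grouping terms gives the coefficients $j^{\mu,v}_t$.

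I expect the main obstacle to be the careful verification of the bijection. The delicate points are that the condition ``$\lambda(\N,\N+d(\mu))\neq\mu$'' exactly characterizes the first arrival at $s(\mu)$, and that $j_t^{\mu,v}$ is finite. Both rely on the forced-continuation property derived from (HNE), which must be applied to rotations of $\mu$ as well as to $\mu$ itself.
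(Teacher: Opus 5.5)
Your proposal is correct and follows the paper's route. Like the paper, you expand $v(0)$ in direction $\mathbf{e}_i$ until every term sits at the source of a unicolor cycle, identify the terms with $v\Lambda\cap\mathcal{O}^\mu_t$, and then use $s(\mu)\Lambda^{d(\mu)}=\{\mu\}$ to reduce states modulo $|\mu|$. Your first-arrival bijection supplies the details of the identification that the paper asserts in one line.

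One caveat, which applies equally to the statement and to the paper's proof, concerns your claim that ``distinct cycles in $\mathcal{C}_i$ are vertex-disjoint''. This holds only if $\mathcal{C}_i$ is read as containing one base-pointed representative per unicolor cycle up to rotation. If all rotations were included (e.g.\ both $\alpha\beta$ and $\beta\alpha$ for a $2$-cycle), they would share vertices and the formula would double count.
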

\begin{proof}
Choose any $i\in \{1,2,\ldots,k\}$. Since $\Lambda$ has no sources, we can transform the vertex $(v,0)\in \overline{\Lambda}^0$ in the free monoid $\mathbb{F}_{\overline{\Lambda}}$ via the relation $\longrightarrow_i$ and also do the same for all its ancestors. After a certain number of steps, since $|\Lambda^0|<\infty$, we will obtain an element in $\mathbb{F}_{\overline{\Lambda}}$ such that all the vertices in its support is the source of some unicolor cycle in degree $\mathbf{e}_i$. Coming back to $T_\Lambda$, this amounts to say that 
\begin{equation}\label{eq vsplit}
v=\displaystyle{\sum_{\mu\in \mathcal{C}_i}}\left(\displaystyle{\sum_{\lambda\in v\Lambda\cap \mathcal{O}^\mu}}s(\mu)(|\lambda|\mathbf{e}_i)\right),
\end{equation}
where we let $\mathcal{O}^\mu$ to be the disjoint union of the sets $\mathcal{O}_t^\mu$, $t=0,1,\ldots,|\mu|-1$. We observe that for any $\mu\in \mathcal{C}_i$ and $n\in \mathbb{N}$, $s(\mu)(n\mathbf{e}_i)=s(\mu)(t\mathbf{e}_i)$, where $0\le t\le |\mu|-1$ and $n\equiv t~(\text{mod}~|\mu|)$. Indeed, writing $n=|\mu|q+t$ for some $q\in \mathbb{N}$, we have \[s(\mu)(n\mathbf{e}_i)=s(\mu)(qd(\mu)+t\mathbf{e}_i)=~^{qd(\mu)}s(\mu)(t\mathbf{e}_i)=s(\mu)(t\mathbf{e}_i),\] where the third equality follows since $\Lambda$ has Condition (HNE) and consequently, $\mu$ has no entrance (see the proof of Proposition \ref{pro sufficient condition for loop with entrance}). Now using this observation in (\ref{eq vsplit}), we have \[v=\displaystyle{\sum_{\mu\in \mathcal{C}_i}}\left(\displaystyle{\sum_{t=0}^{|\mu|-1}}\left(\displaystyle{\sum_{\lambda\in v\Lambda\cap \mathcal{O}_t^\mu}}s(\mu)(t\mathbf{e}_i)\right)\right)=\displaystyle{\sum_{\mu\in \mathcal{C}_i}}\left(\displaystyle{\sum_{t=0}^{|\mu|-1}}j_t^{\mu,v} s(\mu)(t\mathbf{e}_i)\right).\qedhere\]  
\end{proof}
Now we are ready to prove the following. 
\begin{thm}\label{th sufficient condition for crosspdt}
Let $\Lambda$ be a row-finite $k$-graph without sources such that $|\Lambda^0|<\infty$. If $\Lambda$ satisfy both the Conditions \textup{(HNE)} and \textup{(HEDL)}, then the Kumjian--Pask algebra $\KP_\mathsf{F}(\Lambda)$ is a crossed product. 
\end{thm}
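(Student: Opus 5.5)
By Proposition \ref{pro TM distinguishing crosspdt KPAs}, it suffices to show that $^\N\epsilon_\Lambda=\epsilon_\Lambda$ for all $\N\in\mathbb{Z}^k$. Since the action is by a group, it is enough to check the generators. So the plan is to prove $^{\mathbf{e}_i}\epsilon_\Lambda=\epsilon_\Lambda$ for each $i=1,\ldots,k$; the inverses $-\mathbf{e}_i$ then follow automatically.

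Fix $i$. Summing the formula of Lemma \ref{lem vertex representation as sum cycle elements} over all $v\in\Lambda^0$ gives
\[\epsilon_\Lambda=\sum_{\mu\in\mathcal{C}_i}\sum_{t=0}^{|\mu|-1}\Big(\sum_{v\in\Lambda^0}j_t^{\mu,v}\Big)s(\mu)(t\mathbf{e}_i).\]
Every path in $\mathcal{O}_t^\mu$ has range some vertex $v$, so $\mathcal{O}_t^\mu$ is the disjoint union of the sets $v\Lambda\cap\mathcal{O}_t^\mu$. Hence $\sum_v j_t^{\mu,v}=|\mathcal{O}_t^\mu|$. This is finite: $\mathcal{O}^\mu$ is finite because each $v\Lambda\cap\mathcal{O}^\mu$ is, by the finiteness already implicit in the proof of the lemma. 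Condition (HEDL) says this number equals $\ell^\mu$, independently of $t$. Therefore
\[\epsilon_\Lambda=\sum_{\mu\in\mathcal{C}_i}\ell^\mu\sum_{t=0}^{|\mu|-1}s(\mu)(t\mathbf{e}_i).\]

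Now apply $^{\mathbf{e}_i}$. Each inner sum becomes $\sum_{t=1}^{|\mu|}s(\mu)(t\mathbf{e}_i)$. Since $\mu$ has no entrance under (HNE), the argument of Proposition \ref{pro sufficient condition for loop with entrance} gives $s(\mu)(d(\mu))=s(\mu)$, that is, $s(\mu)(|\mu|\mathbf{e}_i)=s(\mu)(0)$. The shifted inner sum is therefore the same multiset of terms, so $^{\mathbf{e}_i}\epsilon_\Lambda=\epsilon_\Lambda$.

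The main point needing care is the bookkeeping in Lemma \ref{lem vertex representation as sum cycle elements}. One has to make sure each unicolor cycle is counted once, not once per rotation or per starting vertex. If different rotations of the same cycle appear as distinct elements of $\mathcal{C}_i$, the decomposition still holds, because the lemma is stated with the sum over $\mathcal{C}_i$ and we use it verbatim. What matters is only that (HEDL) is applied to exactly the same index set and the same $\mathcal{O}_t^\mu$ used there, so the swap of the sums over $v$ and $\mu$ is legitimate. I would also remark that $\mathcal{O}^\mu$ is finite because $|\Lambda^0|<\infty$, the graph is row-finite, and (HNE) holds: a unicolor path avoiding $\mu$ that gets long enough must revisit a vertex and so enter a unicolor cycle, which has no entrance. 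That cycle must then eventually contain $\mu$ or never reach $s(\mu)$, which bounds its length.
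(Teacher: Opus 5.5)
Your argument is the paper's proof essentially line for line. You reduce via Proposition~\ref{pro TM distinguishing crosspdt KPAs} to ${}^{\mathbf{e}_i}\epsilon_\Lambda=\epsilon_\Lambda$, sum Lemma~\ref{lem vertex representation as sum cycle elements} over $\Lambda^0$, use (HEDL) to replace $\sum_v j_t^{\mu,v}=|\mathcal{O}_t^\mu|$ by $\ell^\mu$, and absorb the shift with $s(\mu)(d(\mu))=s(\mu)$.

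Your bookkeeping remark needs one correction. The lemma's decomposition does \emph{not} survive if every rotation of a cycle is listed separately in $\mathcal{C}_i$. For a single $2$-cycle with $s(a)=r(b)=u$ and $r(a)=s(b)=v$, including both $ab$ and $ba$ would give $v=v(0)+u(\mathbf{e}_1)=2v$. So $\mathcal{C}_i$ must be read as one representative per rotation class. This costs nothing here, because (HEDL) for all unicolor cycles in particular holds for the chosen representatives.
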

\begin{proof}
By Proposition \ref{pro TM distinguishing crosspdt KPAs}, it suffices to show that in $T_\Lambda$, $^\N\epsilon_\Lambda=\epsilon_\Lambda$ for all $\N\in \mathbb{Z}^k$. It is enough if we can show this for $\N=\mathbf{e}_i$, $i=1,2,\ldots,k$. So choose any $i\in \{1,2,\ldots,k\}$. Since $\Lambda$ has Condition (HEDL), for any $\mu\in \mathcal{C}_i$, there exists $\ell^\mu\in \mathbb{N}\setminus \{0\}$ such that $|\mathcal{O}_t^\mu|=\ell^\mu$ for all $t=0,1,2,\ldots,|\mu|-1$. Now using Lemma \ref{lem vertex representation as sum cycle elements}, we can write 
\begin{align*}
\epsilon_\Lambda&=\displaystyle{\sum_{v\in \Lambda^0}}\left(\displaystyle{\sum_{\mu\in \mathcal{C}_i}}\left(\displaystyle{\sum_{t=0}^{|\mu|-1}}j_t^{\mu,v}s(\mu)(t\mathbf{e}_i)\right)\right)=\displaystyle{\sum_{\mu\in \mathcal{C}_i}}\left(\displaystyle{\sum_{t=0}^{|\mu|-1}}\left(\displaystyle{\sum_{v\in \Lambda^0}}j_t^{\mu,v}\right)s(\mu)(t\mathbf{e}_i)\right)\\
&= \displaystyle{\sum_{\mu\in \mathcal{C}_i}} \left(\displaystyle{\sum_{t=0}^{|\mu|-1}} \ell^\mu s(\mu)(t\mathbf{e}_i)\right)=\displaystyle{\sum_{\mu\in \mathcal{C}_i}} \left(\displaystyle{\sum_{t=0}^{|\mu|-1}} \ell^\mu s(\mu)((t+1)\mathbf{e}_i)\right)\\
&=\displaystyle{\sum_{\mu\in \mathcal{C}_i}}\left(\displaystyle{\sum_{t=0}^{|\mu|-1}}\left(\displaystyle{\sum_{v\in \Lambda^0}}j_t^{\mu,v}\right)s(\mu)(t\mathbf{e}_i+\mathbf{e}_i)\right)=\displaystyle{\sum_{v\in \Lambda^0}}\left(\displaystyle{\sum_{\mu\in \mathcal{C}_i}}\left(\displaystyle{\sum_{t=0}^{|\mu|-1}}j_t^{\mu,v}s(\mu)(t\mathbf{e}_i+\mathbf{e}_i)\right)\right)=\displaystyle{\sum_{v\in \Lambda^0}} v(\mathbf{e}_i)=~^{\mathbf{e}_i}\epsilon_\Lambda.
\end{align*}
where the third equality follows since $\displaystyle{\sum_{v\in \Lambda^0}}j_t^{\mu,v}=\displaystyle{\sum_{v\in \Lambda^0}}|v\Lambda\cap \mathcal{O}_t^\mu|=|\mathcal{O}_t^\mu|=\ell^\mu$, and the fourth equality follows since $s(\mu)=s(\mu)(d(\mu))$. This completes the proof. 
\end{proof}

\begin{rmk}\label{rem an alternative proof}
We present an alternative clever proof of Theorem \ref{th sufficient condition for crosspdt}. Suppose we have all the conditions as described in Theorem \ref{th sufficient condition for crosspdt}. For each $j=1,2,\ldots,k$, let $\iota_j:\mathbb{N}\longrightarrow \mathbb{N}^k$ be the injective homomorphism defined by $\iota_j(x):=x\mathbf{e}_j$ for all $x\in \mathbb{N}$. Then it is easy to observe that each of the $k$ coordinate graphs $\Lambda_j:=\iota_j^*(\Lambda)$, $j=1,2,\ldots,k$ has Condition (HEDL) as $1$-graphs. Also, $\Lambda_j$ has Condition (HNE) by Remark \ref{rem proof holds for any f}. Therefore the transpose graphs $\Lambda_j^{T}$ have Conditions (NE) and (EDL) of \cite{Vas}, implying that the Kumjian--Pask algebras $\KP_\mathsf{F}(\Lambda_j)$ are crossed products since $\KP_\mathsf{F}(\Lambda_j)\cong L_\mathsf{F}(\Lambda_j^{T})$. Then by Proposition \ref{pro TM distinguishing crosspdt KPAs} $^1\epsilon_{\Lambda_j}=\epsilon_{\Lambda_j}$ in $T_{\Lambda_j}$ for all $j=1,2,\ldots,k$. Finally applying Proposition \ref{pro TM under pullbacks} and noting that the homomorphism $\rho$ preserves order-units, we have $^{\mathbf{e}_j}\epsilon_\Lambda=\epsilon_\Lambda$ for all $j=1,2,\ldots,k$. Therefore, $^\N \epsilon_\Lambda=\epsilon_\Lambda$ for all $\N\in \mathbb{N}^k$ and hence, $\KP_\mathsf{F}(\Lambda)$ is a crossed product by Proposition \ref{pro TM distinguishing crosspdt KPAs}.\qed
\end{rmk}

We end this section with two examples. The first one shows that we cannot simply drop the Condition (HEDL) in Theorem \ref{th sufficient condition for crosspdt}. 

\begin{example}\label{ex HEDL cannot be removed}
Let $\Lambda$ be the $2$-graph with the following $1$-skeleton.
\begin{center}
\begin{tikzpicture}[
    >=Stealth,
    node distance=3cm,
    vertex/.style={circle, fill=black, inner sep=1.5pt},
    blue edge/.style={draw=blue, thick, ->, shorten >=1pt},
    red loop/.style={draw=red, dashed, ->, shorten >=1pt}
]

    \node[vertex, label=below:$u$] (u) at (0,0) {};
    \node[vertex, label=below right:$v$] (v) at (4,0) {};
    \node[vertex, label=right:$w$] (w) at (2,2.5) {};

    \path[blue edge] (v) edge[bend right=25] node[above, black] {$a_1$} (u);
    \path[blue edge] (u) edge[bend right=25] node[below, black] {$a_2$} (v);
    \path[blue edge] (v) edge node[right, black, pos=0.6] {$a_3$} (w);

    \path[red loop] (u) edge[loop left, out=150, in=210, looseness=8, distance=1.5cm] node[left, black] {$b_1$} (u);
    \path[red loop] (v) edge[loop right, out=330, in=30, looseness=8, distance=1.5cm] node[right, black] {$b_2$} (v);
    \path[red loop] (w) edge[out=45, in=135, looseness=8, distance=1.5cm] node[above, black] {$b_3$} (w);

\end{tikzpicture}
\end{center}
As usual, blue solid edges have degree $\mathbf{e}_1=(1,0)$ and red dashed edges have degree $\mathbf{e}_2=(0,1)$. Clearly, $\Lambda$ has Condition (HNE). There are three unicolor cycles in degree $\mathbf{e}_2$, namely $\eta_1:=b_1$, $\eta_2:=b_2$ and $\eta_3=b_3$. Note that $|\mathcal{O}_0^{\eta_1}|=|\mathcal{O}_0^{\eta_2}|=|\mathcal{O}_0^{\eta_3}|=1$. So $\Lambda$ satisfies the requirement of Condition (HEDL) for $i=2$. However, it does not satisfy the requirement for $i=1$ as there is a unicolor cycle in degree $\mathbf{e}_1$, namely $\mu:=a_2 a_1$ such that $|\mathcal{O}_0^\mu|=1\neq 2=|\mathcal{O}_1^\mu|$. Hence, $\Lambda$ fails to have Condition (HEDL). In $T_\Lambda$, $u=v(\mathbf{e}_1)$, $v=u(\mathbf{e}_1)$ and $w=v(\mathbf{e}_1)$ but $v\neq v(\mathbf{e}_1)$. These in turn imply $^{\mathbf{e}_1}\epsilon_\Lambda\neq \epsilon_\Lambda$ and so $\KP_\mathsf{F}(\Lambda)$ is not a crossed product by Proposition \ref{pro TM distinguishing crosspdt KPAs}.   
\end{example}

The conditions of Theorem \ref{th sufficient condition for crosspdt} are also necessary for the Leavitt path algebra of a directed graph to be a crossed product (see \cite[Theorem 3.1]{Vas}). In the level of general $k$-graphs, although we have seen that Condition (HNE) is necessary for crossed product Kumjian--Pask algebras (Corollary \ref{cor HNE is necessary to be crosspdt}), the Condition (HEDL) is not necessary. Our next example justifies this. This again showcases the structural complexity of general $k$-graphs compared to directed graphs. 
\begin{example}\label{ex HEDL is not necessary}
Consider a $2$-graph $\Lambda$ with the following $1$-skeleton:
\[
\begin{tikzpicture}[scale=1.2]

\node[inner sep=1.5pt, circle,draw,fill=black] (A) at (0, 0) {};
\node[inner sep=1.5pt, circle,draw,fill=black] (B) at (2,0) {};
\node[inner sep=1.5pt, circle,draw,fill=black] (C) at (5,0) {};
\node[inner sep=1.5pt, circle,draw,fill=black] (D) at (3.5,2) {}
edge[-latex, blue,thick,loop, in=10, out=90, min distance=50] (D)
edge[-latex, red, dashed, loop, out=90, in=170, min distance=50] (D);
	
\path[->, red, dashed, >=latex,thick] (D) edge [] node[above=0.05cm]{} (A);
\path[->, red, dashed, >=latex,thick] (D) edge [] node[above=0.05cm]{} (B);
\path[->, red, dashed, >=latex,thick] (D) edge [] node[above=0.05cm]{} (C);
\path[->, blue, >=latex,thick] (C) edge [bend left=20] node[above=0.05cm]{} (B);
\path[->, blue, >=latex,thick] (B) edge [bend left=20] node[above=0.05cm]{} (C);
\path[->, blue, >=latex,thick] (B) edge [] node[above=0.05cm]{} (A);

\node at (3.5,1.6) {$z$};
\node at (-0.4,0) {$u$};
\node at (2,-0.3) {$v$};
\node at (5,-0.3) {$w$};
\node at (1,-0.3) {$f_1$};
\node at (3.5,-0.5) {$f_2$};
\node at (3.5,0.5) {$f_3$};
\node at (4.3,2) {$f_4$};
\node at (2.7,2) {$h_4$};
\node at (1.65,1.3) {$h_1$};
\node at (2.3,0.8) {$h_2$};
\node at (4.7,0.8) {$h_3$};
\end{tikzpicture}
\]
The factorization rules for bicolored paths are uniquely determined. It is clear that $\Lambda$ is a row-finite $2$-graph without sources. The cycles in the skeleton have no entrances and so $\Lambda$ has Condition (HNE). Consider the cycle $\mu=f_2 f_3$ with source $v$. Then $\mu$ is a unicolor cycle in degree $\mathbf{e}_1$ and $|\mu|=2$. Note that $\mathcal{O}_0^\mu=\{v\}$ and $\mathcal{O}_1^\mu=\{f_1,f_3\}$. Therefore, $|\mathcal{O}_0^\mu|\neq |\mathcal{O}_1^\mu|$ and so $\Lambda$ does not satisfy Condition (HEDL). However, in $T_\Lambda$, $u=v=w=z=z(\mathbf{e}_2)$ and $z=z(\mathbf{e}_1)$. Therefore $^\N z=z$ for all $\N\in \mathbb{Z}^2$ and consequently, $^\N\epsilon_\Lambda=~^\N(u+v+w+z)=~^\N(4z(\mathbf{e}_2))=4(^\N z(\mathbf{e}_2))=4z(\mathbf{e}_2)=\epsilon_\Lambda$ for all $\N\in \mathbb{Z}^2$, which shows that $\KP_\mathsf{F}(\Lambda)$ is a crossed product.
\end{example}

\textbf{Acknowledgments:} The initial discussions on this topic took place in 2025 at the Mathematisches Forschungsinstitut Oberwolfach as part of the Research Pair Program, and the work was further developed during the third author’s visit to Western Sydney University in 2026. Mukherjee wholeheartedly thanks Hazrat for his warm hospitality during the visit. Hazrat acknowledges Australian Research Council Discovery Project DP230103184.

{\bf Competing Interests.} The authors declare that there are no financial or non-financial competing interests related to this work.

\end{document}